 \numberwithin{equation}{section}
\newtheorem{theorem}{Theorem}[section]
\newtheorem{definition}[theorem]{Definition}
\newtheorem{proposition}[theorem]{Proposition}
\newtheorem{remark}[theorem]{Remark}
\newtheorem{lemma}[theorem]{Lemma}
\newtheorem{corollary}[theorem]{Corollary}
\def\eps{\varepsilon}
\newcommand{\bx}{{\bf x}}
\newcommand{\bm}{{\bf m}}
\newcommand{\ox}{{\overline x}}
\newcommand{\ux}{{\underline x}}
\newcommand{\cI}{{\cal I}}
\newcommand{\cK}{{\cal K}}
\newcommand{\cP}{{\cal P}}
\newcommand{\cW}{{\cal W}}
\newcommand{\R}{{\mathbb R}}
\title{$1$-dimensional multi-agent optimal control with aggregation and distance constraints: qualitative properties and mean-field limit}
\author{ Annalisa Cesaroni and Marco Cirant}
\date{ }
\begin{document}

\maketitle
\begin{abstract}   
In this paper we consider  an optimal control problem for a  large population of interacting agents with deterministic dynamics, aggregating potential and  constraints on reciprocal distances, in dimension 1. We study  existence and qualitative properties of periodic in time optimal trajectories of the finite agents optimal control problem, with particular interest on the compactness of the solutions' support and on the saturation of the distance constraint. Moreover, we prove, through a $\Gamma$-convergence result, the consistency of the mean-field optimal control problem with density constraints with the corresponding underlying finite agent one and we deduce some qualitative results for the time periodic equilibria of the limit problem.  \medskip

\noindent
{\footnotesize \textbf{AMS-Subject Classification}}. {\footnotesize  
82C22   %	Interacting particle systems in time-dependent statistical mechanics 
91A13 % Games with infinitely many players 
%37J50 %  Action-minimizing orbits and measures 
37J45 % Periodic, homoclinic and heteroclinic orbits; variational methods, degree-theoretic methods 
49Q20 %Variational problems in a geometric measure-theoretic setting 
35Q91 %  PDEs in connection with game theory, economics, social and behavioral sciences 
}\\
{\footnotesize \textbf{Keywords}}. {\footnotesize   finite agent optimal control, mean-field optimal control, $\Gamma$-convergence, density constraints.}
\end{abstract}

\maketitle
\tableofcontents

\section{Introduction} 

The study of systems of evolving interacting agents plays a central role in the mathematical modelling of biological, social and economical phenomena. There has been in the last years an impressive development of the literature describing systems with many indistinguishable agents, and their macroscopic mean-field limits as the number of agents tends to infinity. This asymptotic analysis is typically possible whenever the effect of the population on any single agent is described by averaged quantities. There is a large literature on these models, we refer for example to the reviews \cite{carr1, carr2}, see also the introduction and the references in \cite{flos}. 
Due to the fact that self-organization is not always occurring, since the interactions between agents do rarely   lead to  global coordination or pattern formation, 
part of this literature is devoted to the analysis of centralized optimal control where a central planner tries to optimize a social cost. We refer for example to recent works  \cite{bfrs17, bptt19, hr19} on the derivation of first order optimality conditions for controlled interacting agent systems and the associated control problem in the mean field limit, that is, in the limit of dynamical systems with infinitely many agents.

This paper is concerned with this kind of centralized control. In particular we consider a deterministic system with constraints on the reciprocal distance between agents, that is
\[
\dot x^i_t = u^i_t, \qquad |x^i_t  - x^j_t | \ge c N^{-1/d}, \qquad i \neq j \in \{1, \ldots, N\}
\]
where ${\bf x} = (x^1_t, \ldots, x^N_t)$ denotes the states of the agents at time $t$, ${\bf u} = (u^1_t, \ldots, u^N_t)$ are (open-loop) controls, $c > 0$, $N$ is the number of agents and $d$ is the dimension of the ambient space, that will be typically $1$ in our results.   Controls are chosen to minimize a functional of the form
\begin{equation}\label{JTNi}
J_T^N(\bx, {\bf u}) := \frac1{2N} \sum_{i=1}^N \int_{-T/2}^{T/2}  |u^i_t|^2 dt +  \frac1N \sum_{i=1}^N \int_{-T/2}^{T/2} W(x^i_t) dt - \frac1{N^2}\sum_{i \neq j} \int_{-T/2}^{T/2} K\big(|x^i_t-x^j_t|\big) dt,
\end{equation}
where $W$ is a coercive double-well potential and $K$ is an interaction kernel. We remark that, as far as we know, centralized control problems  where aggregation and  spatial constraints between particles are enforced at the same time, as in our case,  have not yet been investigated in the literature. We point out that spatial constraints, which translates into an $L^\infty$ upper bound on the density of the population in the  mean field limit, is from the point of view of applications a quite natural assumption. We refer e.g. to the review \cite{sreview} and the references therein for a presentation and a discussion on the relevance  of  the theory of density-constrained evolutions in the Wasserstein space as a model for crowd motion.

We focus here on $T$-{\it periodic} optimal trajectories, with the aim of understanding the behavior of the system of agents in the long-time regime, and the possible formation of evolutive patterns. Since $J_T^N$ actually depends on the empirical distribution $m_{\bx_t}^N = \sum_i \delta_{x^i_t}$, namely it can be written, recalling that $|x^i_t  - x^j_t | \ge c N^{-1/d}$, as 
\[
J_T^N(\bx, {\bf u}) = \int_{-T/2}^{T/2} \frac1{2N} \sum_{i=1}^N   |u^i_t|^2 dt +   \int_{-T/2}^{T/2} \cW(m_{\bx_t}^N) dt,
\] where $\cW(m_{\bx_t}^N)= \int_{\R^d}  W(x) m_{\bx_t}^N(dx) -\int_{\{x \neq y\}} K(|x-y|) m_{\bx_t}^N(dx)m_{\bx_t}^N(dy)$,  one may hope to treat 
 the optimal control problem by passing, through $\Gamma$-convergence techniques, to  its continuous (or ``mean-field'') limit, which  is, at least formally,
\begin{equation}\label{JTi}
J_T(m, w) =\int_{-T/2}^{T/2} \int_{\R^d} \frac12\left|\frac{dw}{dt\otimes m(t,dx)}\right|^2 m(t,dx)dt +\int_{-T/2}^{T/2}\cW(m)dt,
\end{equation}
where now $m$ is a $T$-periodic flow of probability measures, $w$ is the momentum variable, and the couple $(m,w)$ satisfies the continuity equation
\[
-\partial_t m + {\rm div}(w) = 0.
\]
Note that the distance constraint $|x^i_t  - x^j_t | \ge c N^{-1/d}$ would  lead to the density constraint $0 \le m(t,x) \le \ 2^d c^{-d}\omega_d^{-1}$, where $\omega_d$ denotes the Lebsegue measure of the unit ball of $\R^d$ (see Proposition \ref{convparticelle}).  In a previous work \cite{ccbrake}, we considered the mean-field problem \eqref{JTi} when the term $\cW$ is more generally an infinite-dimensional  double-well shaped potential.  In particular, under suitable symmetry and coercivity assumptions on $\cW$,  we showed that global minimizers (steady states) of the energy $J_T$ are given by indicator functions, and  form two disjoint compact subsets $\mathcal{M}^\pm$ of $\cP_2(\R^d)$. Then, we provided the existence of non-trivial $T$-periodic critical points that exhibit an oscillatory behavior between  $\mathcal{M}^\pm$ and, as the period $T$ tends to infinity,  converge to heteroclinic connections.    We summarize these  results  in Theorem \ref{mfgex}. 

The idea of treating  finite agent optimal control problems by looking at their  mean-field approximation has been widely used in the literature, mostly in the case of stochastic optimal control, which leads to  McKean-Vlasov optimal control problems. The justification that the McKean-Vlasov optimal control problem is consistent with the limit of optimal controls for stochastic finite agent models has been provided  recently in \cite{Lacker} by probabilistic methods. For purely deterministic control problems, as the ones we consider,   the consistency of mean-field limit with the corresponding underlying finite agent control problem has been proved in the recent work \cite{flos}, see also \cite{fornasier}, by   measure-theoretical methods, in particular exploiting  the superposition principle. Nevertheless, to our knowledge there are no results under the presence of distance constraints between agents, as in the case we consider and for which we provide in this paper, in dimension $1$, the $\Gamma$-convergence result. Note that the optimization of \eqref{JTi} is also intimately connected with some problems arising in Mean Field Games (MFG). The theory of MFG, introduced in the mathematical community by Lasry and Lions \cite{LL07}, provides a framework to describe Nash equilibria of differential games with indistinguishable $N$ agents in the mean-field limit $N \to \infty$. We refer to \cite{fs} for this convergence problem in the deterministic case. The periodic in time critical points of $J_T$ we find in \cite{ccbrake} turn out to be equilibria in suitable variational (or potential) MFG models.
Note that examples of MFG exhibiting periodic solutions were found in \cite{c19, cn, ymms} (see also \cite{abc} for numerical evidences). In these models, the presence of viscosity was crucial to prove the existence of such solutions. Here, we deal with a purely deterministic problem.
Finally, we recall also some recent results in dimension $1$, providing the rigorous justification of  local and nonlocal  transport equations with nonlinear mobility as the continuous limit of deterministic follow-the-leader type models, with distance constraints between particles,  see \cite{dffr, dfr} and reference therein.
 \smallskip
 
Going back to the description of our results, the aim of the present paper is two-fold. First, to study qualitative properties of families of critical points of the $N$-agents control problem \eqref{JTNi}, under the distance constraint between agents. In particular, we are interested in the support of the optimal trajectories, and if the distance constraint is actually saturated or not. Second, to prove that the $N$-agents  control problem \eqref{JTNi} converges to the mean-field problem \eqref{JTi}, in the sense of $\Gamma$-convergence. We are able to complete this program in dimension $d = 1$. 
 As a byproduct of our $\Gamma$-convergence results, we will be able also to construct periodic solutions of the continuous problem that are evolving indicator functions of measurable sets, i.e. $\chi_{E(t)}$, $|E(t)| = 1$, and give a complete description of the evolving set $E(t)$.
\smallskip

Let us discuss the standing assumptions on $K$ and $W$. We consider a radially symmetric interaction kernel $K(|x|)$, where $K:[0,+\infty)\to [0, \infty)$ is a function such that  
\begin{equation}\label{assK}\begin{cases} r\mapsto  r^{d-1} K(r)  \in L^1_{loc}([0, +\infty), [0, +\infty)),  \\
 \text{ $K$ is nonincreasing and differentiable in $(0,+\infty)$ and 
%$\lim_{r\to 0} K(r)-K(t+r)>0$ for every $t$, and  
$\lim_{r\to +\infty} K(r)=0$.}\end{cases}
\end{equation}
Moreover, we assume that $K$ is positive definite, which means  that 
\begin{equation}\label{pos} \int_{\R^d}\int_{\R^d} f(x)f(y)K(|x-y|) dxdy\geq 0\quad \text{ for all $f\in L^1(\R^d)$}\end{equation} 
and $\int_{\R^{2d}} f(x)f(y)K(|x-y|) dxdy= 0$ if and only if $f=0$. The corresponding term in \eqref{JTNi} plays the role of an aggregating term, since it is minimized whenever reciprocal distances between agents are minimized. As for the term in \eqref{JTNi} involving $W$, it models spatial preferences of agents. We assume that it has quadratic growth and it is radially increasing outside a ball, i.e.
\begin{equation}\label{assw0} \begin{cases} \text{$W\in C^1(\R^d)$ is non-negative}, \\  
 \exists C_W>0\  \text{such that} \ C_W^{-1}|x|^2-C_W\leq W(x)\leq C_W|x|^2+C_W\\ 
\exists R_0>0  \ \text{s.t.} \ \nabla W(x)\cdot x >0 \ \text{for all} \  |x|>R_0.
\end{cases} 
\end{equation}

 As we previously mentioned, we constructed in \cite{ccbrake} particular periodic solutions of the continuous problem called {\it brake orbits}, namely trajectories travelling along the same path back and forth in T/2-time. To this aim we had to assume some symmetry in the problem. Here again, we assume that $W$ is invariant under a reflection $\gamma:\R^d\to \R^d$, that is
\begin{equation}\label{ref} W(x)= W(\gamma(x))\qquad x\in\R^d.
\end{equation}
We will minimize $J^N_T$ under the following constraints:
\begin{equation}\label{kappaN}
\begin{array}{l}
\cK_T^N := \Big\{\bx_t \in W^{1,2}(\R; \R^N) :  \bx\text{ is $T$-periodic, $x^1_t<x^2_t<\dots<x^N_t$}, \
 |x^{i}_t- x^j_t |\geq \frac{1}{N}\ \forall i\neq j, \forall t\  \Big\}, \\ 
\cK_T^{N,S} :=\Big\{\bx_t \in \cK_T^N \ :  x^i_{\frac T4 + t} = x^i_{\frac T4 - t} \text{ \ and   } x^{N+1-i}_t = -x^i_{-t} \ \forall i,\ \forall t\Big\}.
\end{array} 
\end{equation}
Note that $\cK_T^N$ imposes only some ordering between agents (which is natural in dimension one) and the constraint on their reciprocal distances. In addition, $\cK_T^{N,S}$ forces symmetries that lead to non-trivial in time periodic critical points. Note that these additional constraints are {\it natural} by the symmetry assumption on $W$ (see Section \ref{soptcond}). Our first result reads as follows.

\begin{theorem}\label{thmNparticle}
Assume  \eqref{assK}, \eqref{pos}, \eqref{assw0} and
\eqref{ref}. 
\begin{enumerate} 
\item There exists $\bx\in\cK_T^N$ such that $J_T^N(\bx)=\min_{\cK_T^N}J_T^N$. Moreover every such minimizer is stationary, that is $\bx_t=\bx_s$ for all $t\neq s$ and
\[ \frac1N \sum_{i=1}^N W(x^i) dt - \frac1{N^2}\sum_{i \neq j} K\big(|x^i-x^j|\big)\leq \frac1N \sum_{i=1}^N W(y^i) dt - \frac1{N^2}\sum_{i \neq j} K\big(|y^i-y^j|\big)\] for every ${\bf y}\in(\R^d)^N$ such that $|y^{i}- y^j |\geq\frac{1}{N}$ for $i\neq j$. 
\item There exists $\bx\in\cK_T^{N, S}$  such that $J_T^N(\bx)=\min_{\cK_T^{N,S}}J_T^N$.
\item Let $d=1$, $ \bar \bx\in \cK_T^N$ be any  minimizer of $J_T^N$ constrained to $\cK_T^N$,  and $\bx\in \cK_T^{N,S} $ be any minimizer of $J_T^N$ constrained to $\cK_T^{N,S}$. Then, 
\[
|\bar x^i|, |x^i_t| \le R_0 + 1 \qquad \text{for all $t \in [0,T]$ and $i = 1, \ldots N$}
\] where $R_0$ is as in \eqref{assw0}.
\end{enumerate} \end{theorem}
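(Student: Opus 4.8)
For parts (i) and (ii) the plan is the direct method, made transparent for (i) by reducing the dynamic problem to a static one. Write $J^N_T(\bx)=\frac1{2N}\sum_{i}\int_{-T/2}^{T/2}|\dot x^i_t|^2\,dt+\int_{-T/2}^{T/2}V(\bx_t)\,dt$, where $V(\mathbf z):=\frac1N\sum_i W(z^i)-\frac1{N^2}\sum_{i\ne j}K(|z^i-z^j|)$ is the potential energy of a static configuration $\mathbf z\in(\R^d)^N$. On the closed set $Z^N:=\{\mathbf z:\ z^1\le\cdots\le z^N,\ |z^i-z^j|\ge\tfrac1N\ \forall i\ne j\}$ the function $V$ is continuous (the distance constraint makes the ordering strict and keeps the argument of $K$ away from its possible singularity at $0$) and coercive (from the quadratic lower bound on $W$ in \eqref{assw0}, together with $-\frac1{N^2}\sum_{i\ne j}K(|z^i-z^j|)\ge-K(\tfrac1N)$ by monotonicity of $K$), hence $V$ attains its minimum on $Z^N$, say at $\mathbf z^\ast$. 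Since every $\bx\in\cK^N_T$ satisfies $\bx_t\in Z^N$ for a.e.\ $t$, one gets $J^N_T(\bx)\ge 0+T\min_{Z^N}V=J^N_T(\mathbf z^\ast)$, reading $\mathbf z^\ast$ as a constant trajectory; this proves existence of a minimizer, gives $\min_{\cK^N_T}J^N_T=T\min_{Z^N}V$, and the equality case forces $\dot x^i\equiv0$ for all $i$, i.e.\ every minimizer is stationary and minimizes $V$ on $Z^N$, which in turn yields the displayed inequality of (i) since $V$ is permutation-invariant so any $\mathbf y$ with $|y^i-y^j|\ge\tfrac1N$ may be reordered into $Z^N$. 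For (ii) I would apply the direct method directly on $\cK^{N,S}_T$: this set is nonempty (e.g.\ the constant configuration $z^i=(i-\tfrac{N+1}2)/N$ meets all the symmetry and distance constraints), it is weakly closed in $W^{1,2}$ because weak convergence there implies uniform convergence over a period, so the pointwise constraints and the linear symmetry relations all pass to the limit, and $J^N_T$ is coercive and weakly lower semicontinuous on it (the kinetic part by convexity, the $W$- and $K$-parts by uniform convergence, again using that pairwise distances stay $\ge\tfrac1N$).

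For part (iii), given a minimizer $\bx$ (over $\cK^N_T$, respectively over $\cK^{N,S}_T$), I would compare it with $\tilde x^i_t:=g_i(x^i_t)$, where $g_i(y):=\min\{\psi_i,\max\{\phi_i,y\}\}$, $\psi_i:=R_0+\tfrac iN$ and $\phi_i:=-(R_0+\tfrac{N+1-i}N)$; thus $\phi_i<0<\psi_i$, both $(\psi_i)_i$ and $(\phi_i)_i$ are increasing with consecutive gap exactly $\tfrac1N$, $\phi_{N+1-i}=-\psi_i$, and $g_j=g_i(\,\cdot\,-\tfrac{j-i}N)+\tfrac{j-i}N$. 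One then checks three facts. (a) Admissibility: $\tilde\bx$ is $T$-periodic and $W^{1,2}$, and since $g_i$ is nondecreasing and $1$-Lipschitz, the pairwise bound $x^j_t-x^i_t\ge\tfrac{j-i}N$ (which follows from the distance constraint) gives $\tfrac{j-i}N\le\tilde x^j_t-\tilde x^i_t\le x^j_t-x^i_t$ for $i<j$, so the ordering and the distance constraints are preserved; moreover the identity $g_i(y)=-g_{N+1-i}(-y)$, forced by $\phi_{N+1-i}=-\psi_i$, shows that $\tilde\bx$ stays in $\cK^{N,S}_T$ in the symmetric case. (b) $J^N_T(\tilde\bx)\le J^N_T(\bx)$: the kinetic term cannot increase because $\dot{\tilde x}^i_t=\dot x^i_t\,\mathbf 1_{\{\phi_i<x^i_t<\psi_i\}}$ a.e., and the interaction term cannot increase because $|\tilde x^i_t-\tilde x^j_t|\le|x^i_t-x^j_t|$ and $K$ is nonincreasing. (c) Strict decrease: if $x^i_t\notin[\phi_i,\psi_i]$ for some $i$ and some $t$ (equivalently, on a set of positive measure, by continuity), then on $\{x^i_t>\psi_i\}$ one has $\tilde x^i_t=\psi_i$ with $x^i_t>\psi_i>R_0$, and on $\{x^i_t<\phi_i\}$ one has $\tilde x^i_t=\phi_i$ with $x^i_t<\phi_i<-R_0$; since in $d=1$ the hypothesis $\nabla W(x)\cdot x>0$ for $|x|>R_0$ means $W$ is strictly increasing on $(R_0,\infty)$ and strictly decreasing on $(-\infty,-R_0)$, the $W$-term then strictly decreases. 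By (a)--(c) this contradicts minimality unless $\phi_i\le x^i_t\le\psi_i$ for all $i$ and $t$, and since $1\le i\le N$ this gives $|x^i_t|\le R_0+1$.

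The routine ingredients here are the direct method, the coercivity estimates, and the passage of the constraints to weak limits. The delicate point is (a)--(b) of the truncation step: that the ``staircase'' truncation stays in the admissible class and contracts every pairwise distance. This works precisely because the two bands are chosen with slopes matching the minimal admissible gap, $\psi_j-\psi_i=\phi_j-\phi_i=(j-i)/N$, so that $g_j$ is an exact translate of $g_i$ and the whole verification collapses to the one-line inequality $\tfrac{j-i}N\le g_j(x^j_t)-g_i(x^i_t)\le x^j_t-x^i_t$; and for $\cK^{N,S}_T$ one is forced to truncate from both sides simultaneously, since a one-sided cap would destroy the reflection symmetry $x^{N+1-i}_t=-x^i_{-t}$, which is why the two bands are taken symmetric to one another.
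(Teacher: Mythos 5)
Your argument is correct, and on parts (ii) and (iii) it follows essentially the same route as the paper: direct method for (ii), and for (iii) the very same ``staircase'' truncation $\bar x^i_t = \max\{\min\{x^i_t, R_0+\tfrac iN\}, -R_0-\tfrac{N+1-i}N\}$, which the paper verifies by explicit casework while you package it via the translation identity $g_j(\cdot)=g_i(\cdot-\tfrac{j-i}N)+\tfrac{j-i}N$ and $1$-Lipschitz monotonicity of $g_i$ --- a slicker bookkeeping but the same underlying idea. The genuine difference is in (i). The paper only proves existence by the direct method (and says (i) is ``analogous and easier'' than (ii)), without explicitly addressing the stationarity and potential-minimality claim. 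You instead split $J^N_T$ into kinetic plus $\int_{-T/2}^{T/2}V(\bx_t)\,dt$ with $V(\mathbf z)=\frac1N\sum_iW(z^i)-\frac1{N^2}\sum_{i\neq j}K(|z^i-z^j|)$, minimize $V$ on the closed coercive set $Z^N$ (using that the distance constraint keeps $K$ away from its singularity, so $V$ is continuous and bounded below), observe $J_T^N(\bx)\ge T\min_{Z^N}V$ with equality exactly for constant trajectories that realize $\min_{Z^N}V$, and deduce the displayed inequality by permutation invariance of $V$. This buys you the full content of (i) --- existence, stationarity, and the comparison inequality --- in one shot, whereas the paper's direct-method sketch only establishes existence and leaves stationarity implicit. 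Two very minor presentation remarks: in your step (b)--(c) for (iii), it is worth stating explicitly that the $W$-term is nonincreasing everywhere (not only strictly decreasing on the bad set), since that is what is actually used together with (b) before invoking (c) for the contradiction; and the reordering step in (i) should be read in $d=1$, which is consistent with the definition of $\cK^N_T$ via the total ordering $x^1_t<\dots<x^N_t$.
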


Items (i) and (ii) in the previous statement are proven in a standard way arguing with the direct method. Item (iii) requires a delicate truncation procedure. Note that (iii) states that, independently on $T$ and on the number of agents, every agent is bounded to remain in a compact region of $\R$. Then, we prove that if the interaction kernel is strong enough with respect to the potential term, then agents should minimize reciprocal distances. In particular, assume that
\begin{equation}
\label{coew2} \min_{0 < r \le 2R_0 + 2} |K'(r)|  > \max_{|x| \le R_0 + 1} |\nabla W(x)|.
\end{equation}
Then, we have:
  \begin{theorem} \label{propositionsaturation} 
Besides the assumptions of Theorem \ref{thmNparticle}, suppose also that \eqref{coew2} holds. Let $ \bar \bx\in \cK_T^N$ be any  minimizer of $J_T^N$ in $\cK_T^N$ (which is stationary by Theorem \ref{thmNparticle}) and let $\bx \in \cK_T^{N,S} $ be any minimizer of $J_T^N$ in $\cK_T^{N,S}$. Then they both saturate the distance  constraint, that is \[
\bar x^{i+1} = \bar x^{i}+ \frac1{N} \quad \text{and} \quad x^{i+1}_t = x^{i}_t + \frac1{N}\qquad \text{for all $t$ and $i$.} 
\]
\end{theorem}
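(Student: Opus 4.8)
The plan is to argue by contradiction, exhibiting, whenever a minimizer fails to saturate the distance constraint, an admissible competitor with strictly smaller energy. The competitor is the \emph{rigidification about the centre of mass}: given a minimizer $\bx$ (either $\bar\bx\in\cK_T^N$, which is stationary by Theorem \ref{thmNparticle}, or $\bx\in\cK_T^{N,S}$), set $z_t:=\tfrac1N\sum_{i=1}^N x^i_t$ and
\[
y^i_t:=z_t+\tfrac{i}{N}-\tfrac{N+1}{2N},\qquad i=1,\dots,N,
\]
so that $\mathbf{y}$ has all consecutive gaps equal to $\tfrac1N$. The first step is admissibility of $\mathbf{y}$: ordering and the distance constraints hold by construction, and $z$ inherits $T$-periodicity and $W^{1,2}$ regularity from $\bx$; in the case of $\cK_T^{N,S}$ one checks that $x^{N+1-i}_t=-x^i_{-t}$ forces $z$ to be odd and $x^i_{T/4+t}=x^i_{T/4-t}$ forces $z$ to be symmetric about $T/4$, which is exactly what makes $\mathbf{y}\in\cK_T^{N,S}$.

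Set $h^{(l)}_t:=x^{l+1}_t-x^l_t-\tfrac1N\ge0$ (so $\bx$ saturates iff every $h^{(l)}\equiv0$), $\kappa_{\min}:=\min_{0<r\le2R_0+2}|K'(r)|$ and $w_{\max}:=\max_{|x|\le R_0+1}|\nabla W(x)|$. I would then estimate $J_T^N(\mathbf{y})-J_T^N(\bx)$ termwise. \emph{Kinetic energy:} since $\dot y^i=\dot z=\tfrac1N\sum_j\dot x^j$ for every $i$, Jensen's inequality gives $\tfrac1{2N}\sum_i\int|\dot y^i|^2\le\tfrac1{2N}\sum_i\int|\dot x^i|^2$, so rigidifying never costs kinetic energy. \emph{Interaction:} by the ordering, $x^j_t-x^i_t=\sum_{l=i}^{j-1}g^l_t\ge\tfrac{j-i}N$ for $i<j$, and by Theorem \ref{thmNparticle}(iii) together with the constraint all distances lie in $[\tfrac1N,2R_0+2]$; since $K'\le0$, the mean value theorem gives $K(\tfrac{j-i}{N})-K(x^j_t-x^i_t)\ge\kappa_{\min}\sum_{l=i}^{j-1}h^{(l)}_t$, and summing over \emph{all} pairs $i<j$ yields
\[
(\text{interaction of }\mathbf{y})-(\text{interaction of }\bx)\le-\frac{2\kappa_{\min}}{N^2}\int_{-T/2}^{T/2}\sum_{l=1}^{N-1}l(N-l)\,h^{(l)}_t\,dt .
\]
\emph{Potential:} a direct computation gives $y^i_t-x^i_t=\sum_{l=i}^{N-1}\tfrac{N-l}{N}h^{(l)}_t-\sum_{l=1}^{i-1}\tfrac{l}{N}h^{(l)}_t$, whence $\sum_i|y^i_t-x^i_t|\le\tfrac2N\sum_l l(N-l)h^{(l)}_t$; moreover $y^1_t\ge x^1_t$ and $y^N_t\le x^N_t$ are elementary from the definition of $z$, so by Theorem \ref{thmNparticle}(iii) both $x^i_t$ and $y^i_t$ stay in $[-R_0-1,R_0+1]$ and the mean value theorem gives
\[
(\text{potential of }\mathbf{y})-(\text{potential of }\bx)\le\frac1N\sum_i\int|W(y^i_t)-W(x^i_t)|\,dt\le\frac{2w_{\max}}{N^2}\int_{-T/2}^{T/2}\sum_{l=1}^{N-1}l(N-l)\,h^{(l)}_t\,dt .
\]

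Adding the three bounds,
\[
J_T^N(\mathbf{y})-J_T^N(\bx)\le\frac{2(w_{\max}-\kappa_{\min})}{N^2}\int_{-T/2}^{T/2}\sum_{l=1}^{N-1}l(N-l)\,h^{(l)}_t\,dt ,
\]
and the right-hand side is strictly negative precisely when $w_{\max}<\kappa_{\min}$ (i.e.\ \eqref{coew2}) and some $h^{(l)}$ is positive on a set of positive measure — which, $g^l$ being continuous, is exactly the failure of saturation. This contradicts minimality; hence every $h^{(l)}_t$ vanishes, i.e.\ $x^{l+1}_t=x^l_t+\tfrac1N$ for all $t$ and $l$ (for $\bar\bx$, which is stationary, one may instead simply test the second conclusion of Theorem \ref{thmNparticle}(i) with $\bar{\mathbf{y}}$). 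The crux — the step deserving care — is that the potential loss and the interaction gain, once expanded in the gap excesses $h^{(l)}_t$, carry the \emph{same} combinatorial weight $l(N-l)$; obtaining this forces one to use \emph{all} pairs (not merely the consecutive ones) in the interaction estimate, together with the exact identity for $y^i_t-x^i_t$ above, after which the comparison reduces cleanly to \eqref{coew2}. The remaining ingredients — admissibility of $\mathbf{y}$, the Jensen bound for the kinetic term, and the a priori bound of Theorem \ref{thmNparticle}(iii) — are routine.
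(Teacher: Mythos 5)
Your proposal is correct, and it takes a genuinely different route from the paper's. The paper argues through first-order optimality conditions: it linearizes $J_T^N$ at a minimizer, derives the differential inequalities of Lemma~\ref{dmoptcont} for the barycenter differences $\bm_t^J=\ox_t^{J+1}-\ux_t^J$, upgrades them to the pointwise convexity bound $(\bm_t^J)''\ge 2\kappa_{\min}-2w_{\max}>0$ of Lemma~\ref{mJconvex} on any interval where the $J$-th gap is slack, and then reaches a contradiction by looking at the time $\hat t$ where $\bm^1$ attains its maximum and propagating the saturation to all $J$ via the algebraic identities of Lemmas~\ref{lemA1}--\ref{lemA3}. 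You instead build a single explicit competitor (rigidification about the center of mass) and beat the minimizer directly. The verification is sound: $\mathbf{y}$ is admissible in both $\cK_T^N$ and $\cK_T^{N,S}$ (the symmetry checks for $z_t$ are exactly as you describe), the kinetic part does not increase by Jensen, the MVT on $[\tfrac{j-i}{N},\,x_t^j-x_t^i]\subset(0,2R_0+2]$ gives the interaction gain, the identity $y_t^i-x_t^i=\sum_{l\ge i}\tfrac{N-l}{N}h_t^{(l)}-\sum_{l<i}\tfrac{l}{N}h_t^{(l)}$ and the inclusion $y_t^i\in[-R_0-1,R_0+1]$ (from $y_t^1\ge x_t^1$ and $y_t^N\le x_t^N$) give the potential loss, and the two expansions both carry the weight $l(N-l)$. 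Continuity of $h^{(l)}$ (as $x^i\in W^{1,2}\subset C^{0,1/2}$) turns "positive somewhere" into "positive on an open set", yielding the needed strict inequality. Compared with the paper, your argument is shorter, avoids the machinery of linearized functionals and natural-constraint removal of symmetry multipliers (Remark~\ref{remregdiscrete}), and avoids Appendix~\ref{algapp}; it buys elementarity and transparency, and the coefficient matching $l(N-l)$ makes the sharpness of hypothesis~\eqref{coew2} visible at a glance. What the paper's longer route buys in return is the intermediate structural information (the optimality conditions themselves and the convexity of $\bm^J$), which is of independent interest and is reused, e.g., in Remark~\ref{remenergia} to reduce the discrete problem to a scalar ODE.
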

The previous theorem is obtained by extracting information from the optimality conditions, which typically have the form of differential inequalities by the presence of the distance constraint. To circumvent the rigidity given by such a constraint, it is convenient to write optimality conditions for groups of agents. In particular, we show that differences between barycenters
\[
\frac1{N-J} \sum_{i=J+1}^N x^i_t \quad - \quad \frac1{J} \sum_{i=1}^J x^i_t,
\]
for any $J = 1, \ldots, N-1$, describe completely reciprocal distances between agents at each time $t$, and use this fact to prove Theorem \ref{propositionsaturation}. Note that if \eqref{coew2} does not hold, the conclusion of Theorem \ref{propositionsaturation} may be false. See Remark \ref{nonsat} and the numerical experiments in Section \ref{numan}.

\smallskip

The final step in our analysis is the study of the limit $N \to \infty$. First, we define the continuous counterparts of $\cK_T^N, \cK_T^{N,S}$ as follows:
\begin{eqnarray}\nonumber  \cK_T &:=&\left\{(m,w)\ : \ m\in  C(\R, \cP_{2}^r(\R^d)), \quad m(t)  \text{ is $T$-periodic, }\right. 
\\&&  w \text{ is a  Borel $d$-vector measure  on $\R\times \R^d$, absolutely continuous w.r.t.  
 $dt \otimes m(t,dx)$, }\nonumber \\
&&    -\partial_t m + {\rm div}(w) = 0 \ \text{in the sense of distributions,   }  \nonumber \\
& &\left.  \int_{t_1}^{t_2} \int_{\R^d}\frac12 \left|\frac{dw}{dt\otimes m(t,dx)}\right|^2 m(t,dx) < \infty \ \text{for all $-\infty < t_1 < t_2 < \infty$}\right\}, \label{kappanuovo} \\
\cK_T^S &:=& \left\{ (m,w) \in \cK_T :\  m(-t) = \gamma_\# m(t),\     m\left( \frac T4 + t\right) = m\left( \frac T4 - t\right),  \forall t \in \R \right\}. \label{kappat}
\end{eqnarray} 
Then, we have a $\Gamma$-convergence results for the (constrained) functional $J_T^N$ to $J_T$, again in dimension 1.
\begin{theorem} Assume  \eqref{assK}, \eqref{pos}, \eqref{assw0}. Then, \[(J_T^N) |_{\cK_T^N} \stackrel{\Gamma}{\longrightarrow}  (J_T) |_{\cK_T} \qquad \text{as $N\to +\infty$,} \] with respect  to  convergence in $C([0,T], \mathcal{P}_2(\R))$ of the empirical measure $m_{\bx}^N(\cdot)$ to $m(\cdot)dx$  and  narrow  convergence of the  empirical measure $  \int_{-T/2}^{T/2}\delta_t\otimes w^N_\bx(t)dt$  to $w \  dt\otimes  dx$. The same result holds with the additional symmetry constraints, i.e. $(J_T^{N}) |_{\cK_T^{N,S}} \stackrel{\Gamma}{\longrightarrow}  (J_T) |_{\cK_T^S}$.
\end{theorem}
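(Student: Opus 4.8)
The plan is to prove the $\Gamma$-$\liminf$ and $\Gamma$-$\limsup$ inequalities separately, in the topology of the statement: uniform-in-$t$ convergence $m^N_{\bx^N}(\cdot)\to m(\cdot)\,dx$ in $\cP_2(\R)$ of the empirical measures, together with narrow convergence of the lifted momenta $\int_{-T/2}^{T/2}\delta_t\otimes w^N_{\bx^N}(t)\,dt\to w\,dt\otimes dx$, where $w^N_{\bx}(t)=\frac1N\sum_i\dot x^i_t\,\delta_{x^i_t}$, so that $(m^N_{\bx},w^N_{\bx})$ solves the continuity equation and $\frac1{2N}\sum_i|\dot x^i_t|^2=\int_\R\frac12\left|\tfrac{dw^N_{\bx}}{dm^N_{\bx}}\right|^2dm^N_{\bx}$. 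Two facts proper to $d=1$ will be used throughout: the ordering together with $|x^i_t-x^j_t|\ge\frac1N$ forces $|x^i_t-x^j_t|\ge\frac{|i-j|}N$; and in the limit $m(t,\cdot)\le1$ (cf.\ Proposition \ref{convparticelle}), a \emph{sharp} bound, for which the continuity equation with $m(t,\cdot)$ nonatomic of finite action admits a \emph{unique} solution $w\ll dt\otimes m$ (so that $J_T$ is effectively a function of $m$ alone on $\cK_T$).

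For the $\Gamma$-$\liminf$ inequality, take $\bx^N\in\cK_T^N$ realizing the stated convergences to $(m,w)$, and assume (along a subsequence) $J_T^N(\bx^N)$ bounded. From $W(x)\ge C_W^{-1}|x|^2-C_W$ in \eqref{assw0} and $\frac1{N^2}\sum_{i\ne j}K(|x^i_t-x^j_t|)\le\frac1{N^2}\sum_{i\ne j}K\!\left(\tfrac{|i-j|}N\right)\le2\int_0^1K(r)\,dr<\infty$ (monotonicity of $K$, \eqref{assK}), boundedness of $J_T^N$ controls $\int\!\int W\,dm^N_{\bx^N}\,dt$, hence the second moments. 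The kinetic term is handled by the joint convexity and weak-$*$ lower semicontinuity of the Benamou--Brenier action $(\mu,\nu)\mapsto\int\frac12\left|\tfrac{d\nu}{d\mu}\right|^2d\mu$ on $[-T/2,T/2]\times\R$; the potential term passes to the limit by continuity and quadratic growth of $W$ with the $\cP_2$-convergence of $m^N_{\bx^N}$; and for the interaction term one shows $\limsup_N\frac1{N^2}\sum_{i\ne j}\int K(|x^i_t-x^j_t|)\,dt\le\int\!\int\!\int K(|x-y|)\,m(t,dx)m(t,dy)\,dt$ by splitting $K=K_M+(K-K_M)$, $K_M=K\wedge M$: the bounded continuous part passes to the limit using $m^N_{\bx^N}\otimes m^N_{\bx^N}\to m\otimes m$ and the nonatomicity of $m(t,\cdot)$ (so the excluded diagonal is harmless), while the remainder is dominated by $\frac1{N^2}\sum_{i\ne j}(K-K_M)(\tfrac{|i-j|}N)\le2\int_0^1(K-K_M)(r)\,dr\to0$ as $M\to\infty$, since $K\in L^1(0,1)$. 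Adding the three inequalities gives $\liminf_N J_T^N(\bx^N)\ge J_T(m,w)$.

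For the $\Gamma$-$\limsup$ inequality the recovery sequence is built directly by quantiles. Given $(m,w)\in\cK_T$ with $J_T(m,w)<\infty$, let $X_t$ be the monotone rearrangement (generalized inverse distribution function) of $m(t,\cdot)$; finiteness of the action makes $t\mapsto X_t$ a $T$-periodic element of $W^{1,2}([-T/2,T/2];L^2(0,1))$, and one sets $x^i_t:=N\int_{(i-1)/N}^{i/N}X_t(p)\,dp$, $i=1,\dots,N$. Then $\bx^N\in W^{1,2}(\R;\R^N)$ is $T$-periodic; $x^1_t<\dots<x^N_t$ because $X_t$ is strictly increasing ($m(t,\cdot)$ nonatomic); and — this is the crucial point — since $\int_{X_t(q)}^{X_t(q+1/N)}m(t,x)\,dx=\frac1N$ while $m(t,\cdot)\le1$, one has $X_t(q+\tfrac1N)-X_t(q)\ge\frac1N$ for every $q$, whence $x^{i+1}_t-x^i_t\ge\frac1N$ for all $t$, so $\bx^N\in\cK_T^N$. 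In the symmetric case $m(-t)=\gamma_\#m(t)$ and $m(T/4+t)=m(T/4-t)$ translate precisely into $x^{N+1-i}_t=-x^i_{-t}$ and $x^i_{T/4+t}=x^i_{T/4-t}$, so $\bx^N\in\cK_T^{N,S}$. As for energies: (a) $m^N_{\bx^N}(t)\to m(t)$ in $\cP_2(\R)$ uniformly in $t$, since the monotone coupling gives $W_2^2(m^N_{\bx^N}(t),m(t))\le\int_0^1|X_t(p)-X^N_t(p)|^2\,dp$, with $X^N_t$ the piecewise-constant rearrangement of $m^N_{\bx^N}(t)$, and this tends to $0$ uniformly in $t$ by exploiting the compactness of $\{m(t):t\in[0,T]\}$ in $\cP_2(\R)$ — hence uniform integrability of second moments — to control the extreme quantiles; (b) the $W$-term converges by continuity and quadratic growth of $W$; (c) the $K$-term converges to $\int\!\int\!\int K(|x-y|)\,m(t,dx)m(t,dy)\,dt$, the lower bound from $K\ge K_M$ with $K_M$ bounded continuous and the upper bound from $|x^i_t-x^j_t|\ge\frac{|i-j|}N$ and $K\in L^1(0,1)$, as above; and (d) by Jensen, $\frac1{2N}\sum_i\int|\dot x^i_t|^2\,dt\le\int_{-T/2}^{T/2}\!\int_0^1\frac12|\partial_t X_t(p)|^2\,dp\,dt$, which equals the kinetic part of $J_T(m,w)$, and correspondingly $\int_{-T/2}^{T/2}\delta_t\otimes w^N_{\bx^N}(t)\,dt\to w\,dt\otimes dx$ narrowly (by the uniqueness of $w$). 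Combining (a)--(d), $\limsup_N J_T^N(\bx^N)\le J_T(m,w)$.

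I expect the main obstacle to be twofold. First, the (possibly singular at the origin) kernel $K$: in $d=1$ it is tamed, via monotonicity, the spacing bound $|x^i_t-x^j_t|\ge\frac{|i-j|}N$, and the truncation $K_M=K\wedge M$, by the sole local integrability of $r\mapsto K(r)$ from \eqref{assK}, which is exactly why the $\Gamma$-convergence is carried out in dimension one. Second, the admissibility of the quantile particles in the recovery sequence, which holds with \emph{no slack} and therefore depends critically on the sharpness of the bound $m(t,\cdot)\le1$, together with the (somewhat technical) uniform-in-$t$ $\cP_2$-convergence of the empirical measures. Finally, the symmetric statement $(J_T^N)|_{\cK_T^{N,S}}\stackrel{\Gamma}{\longrightarrow}(J_T)|_{\cK_T^S}$ follows from exactly the same two arguments, since empirical measures of symmetric configurations are symmetric and the quantile construction preserves the reflection and half-period symmetries.
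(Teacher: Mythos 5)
Your $\Gamma$-$\liminf$ argument is essentially the paper's: lower semicontinuity of the Benamou--Brenier action, lower semicontinuity (or, in your $\cP_2$ topology, continuity) of the $W$-term, and control of the (subtracted) interaction term by truncating $K$ and exploiting the spacing bound $|x^i_t-x^j_t|\ge |i-j|/N$ together with $K\in L^1_{\rm loc}$ --- the same decomposition as the paper's Proposition~\ref{proplimite}. Where you genuinely depart is the $\Gamma$-$\limsup$. The paper mollifies $(m,w)$ to $(m_\eps,w_\eps)$ with $\max m_\eps<1$, constructs particle initial data by quantiles of $m_\eps(0)$ on a window $(-R,R)$, evolves them by the flow of $v_\eps=w_\eps/m_\eps$ (so spacing is preserved automatically by the push-forward identity), and then passes $N\to\infty$, $R\to\infty$, $\eps\to0$ via a diagonal argument (Lemmata~\ref{glslemma0}--\ref{glslemma2}). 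You instead build the recovery sequence in one shot by averaging the time-dependent quantile function $X_t$ of $m(t,\cdot)$ over dyadic-type blocks, $x^i_t=N\int_{(i-1)/N}^{i/N}X_t$. This is cleaner and exploits the one-dimensional structure more fully, but shifts the technical weight onto two ingredients you should make explicit: (a) the isometry $\cP_2(\R)\cong$ convex cone in $L^2(0,1)$ via quantiles, under which finite-action curves become $W^{1,2}(\R;L^2(0,1))$ and $\int_{\R}\tfrac{|w|^2}{m}\,dx=\int_0^1|\partial_t X_t|^2\,dp$ (this uses, as you note, that $w$ is the \emph{unique} momentum absolutely continuous w.r.t.\ $dt\otimes m$ solving the continuity equation, a fact special to $d=1$); and (b) the uniform-in-$t$ convergence $\sup_t\|A_N X_t-X_t\|_{L^2(0,1)}\to0$, where $A_N$ is the block-averaging projector, which follows from $\|A_N\|\le1$ plus compactness of $\{X_t\}_{t\in[0,T]}$ in $L^2(0,1)$ (equivalently, compactness of $\{m(t)\}$ in $\cP_2$). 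Your spacing estimate $x^{i+1}_t-x^i_t\ge\frac1N$ is correct: $m(t,\cdot)\le1$ gives $X_t(q+s)-X_t(q)\ge s$, and integrating preserves the inequality --- note this is sharp with no slack, whereas the paper deliberately regularizes to have $\max m_\eps<1$ and thus a strict gap $M_{R,\eps}/(\eta_\eps N)>1/N$; your version works because the constraint in $\cK_T^N$ is non-strict. Net assessment: both proofs are valid; the paper's is more modular and would survive weakening of the 1D quantile machinery, while yours avoids the triple limit and the flow lemma at the price of invoking the quantile isometry. One small point to state explicitly: $X_t$ is strictly increasing on $(0,1)$ because $m(t,\cdot)$ is nonatomic (which you use for $x^1_t<\cdots<x^N_t$), but $X_t$ need not be \emph{continuous} if $\mathrm{supp}\,m(t)$ has gaps; your $L^2$ estimate in (b) is insensitive to this, but a pointwise argument would not be.
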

See Theorem \ref{gamma} for a more precise statement of this result. Also in this step, the key difficulty is handling the presence of the distance constraint, in particular in the $\Gamma-\limsup$ inequality, that is: given $(m,w) \in \cK_T$, to construct a sequence of competitors ${\bx}^N$ whose empirical measures converge to $m$. Note that $|x^i_t  - x^j_t | \ge c N^{-1/d}$ has to be satisfied. We are not able yet to perform this construction in dimension strictly larger than one.

As a byproduct of the $\Gamma$-convergence result, minimizers of $J_T^N$ converge to minimizers of $J_T$, and their properties pass to the limit: so, under the assumption of strong aggregation \eqref{coew2},  evenly spaced agents give in the limit a density which is indeed of the form $\chi_{E(t)}$, $|E(t)| = 1$. In particular,  as it will be stated in Corollary \ref{finalcor}, we obtain the existence of $(m_T, w_T)\in \cK_T^{S}$  minimizing $J_T$  in $\cK_T^S$ such that 
\[m_T(t)=\chi_{(a_T(t), a_T(t)+1)}\qquad w_T(t)= -\dot{a}_T(t)\chi_{(a_T(t), a_T(t)+1)}\] 
where $a_T:\R\to [-R_0-1, R_0]$  with $R_0$ as in \eqref{assw0}, is a $T$-periodic $C^2$ function minimizing 
\[\int_{-T/2}^{T/2} \frac{|\dot x_t|^2}{2}dt+\int_{-T/2}^{T/2}\int_{x_t}^{x_t+1} W(s)ds dt\] 
among $T$ periodic curves $x_t$ such that $x_{t+\frac{T}4}=x_{\frac{T}4-t}$,  and $x_t=-x_{-t}-1$.
This shows that the infinite dimensional constrained minimization problem of $J_T$ is equivalent to a one dimensional minimization problem for a single representative agent, who observes the averaged potential $x\mapsto \int_{x}^{x +1} W(s)ds$.

 \subsection*{Acknowledgements} The authors are partially supported by the Fondazione CaRiPaRo Project ``Nonlinear Partial Differential Equations: Asymptotic Problems and Mean-Field Games'', and are members of GNAMPA-INdAM. The authors wish to thank D. Gomes for having provided them with a very efficient {\it Mathematica} code, which has been used in the numerical analysis of Section \ref{numan}.
%
%\subsection*{Notation}  We wil denote $B(x,r)\subseteq \R^d$ the ball of center $x$ and radius $r$, and with $\omega_d=|B(0,1)|$. 
%For every $E\subseteq \R^d$ measurable set, we define $\chi_E$ to be the characteristic function of $E$. 

\section{Preliminary results}  \label{secassumption}
\subsection{Wasserstein spaces}  We briefly recall some notions for calculus in Wasserstein spaces that will be useful in the following. For a general reference on these results we refer to \cite{ags, Sbook}. First, let $ \mathcal{P}(\R^d)$ be the space of Borel probability measures on $\R^d$, endowed with the topology of narrow convergence, that is:
 \begin{definition}  Let $\mu_k, \mu\in \mathcal{P}(\R^d)$.  We say that $\mu_k\to \mu$ narrowly if 
 \[\lim_k \int_{\R^d}g(x)\mu_k(dx)= \int_{\R^d} g(x) \mu(dx)\qquad \forall g\in C_b(\R^d),\]   
where $C_b(\R^d)$ is the space of continuous and bounded functions on $\R^d$.  
 \end{definition} 
 Note that this convergence is equivalent to convergence in the sense of distributions (see \cite[Remark 5.1.6]{ags}). 
  We recall also  the notion of weak-* convergence in $L^\infty$, that is: for $\mu_k, \mu \in L^\infty(\R^d)$, $\mu_k$ is said to converge to $\mu$ weak-* in $L^\infty$ if  \[\lim_k \int_{\R^d}g(x)\mu_k(dx)= \int_{\R^d} g(x) \mu(dx)\qquad \forall g\in L^1(\R^d).\] 
\begin{definition}\label{wdef}  
Let $p\geq 1$. The Wasserstein space of Borel probability measures  with bounded $p$-moment is defined by
\[\mathcal{P}_p(\R^d)=\left\{\mu\in \mathcal{P}(\R^d)\ \Big| \int_{\R^d} |x|^p d\mu(x)<+\infty\right\}.\] 

The Wasserstein space can be endowed with the $p$-Wasserstein distance
\begin{equation}\label{wassdis} d_p(\mu, \nu)^p= \inf\left\{\int_{\R^d} \int_{\R^d} |x-y|^p d\pi(x,y)\ |\ \pi\in \pi(\mu, \nu)\right\}\end{equation} 
where $\pi(\mu,\nu)$  is the set of  Borel probability measures on $\R^d\times \R^d$ such that $\pi(A\times\R^d)=\mu(A)$ and $\pi(\R^d\times A)=\nu(A)$ for  any Borel set $A\subseteq \R^d$.  

We introduce a subspace of regular measures as follows
\[
  \cP_{p}^{r} = \{m \in \cP_p(\R^d) \ : \ \exists \ 0 \le \tilde m \le 1 \ \text{ a.e. on $\R^d$ s.t. $m = \tilde m dx$}\} .
 \]
 \end{definition}

 Note that $\mathcal{P}_p(\R^d)\subset \mathcal{P}_q(\R^d)$ for $p<q$, and by Jensen inequality, $d_p(\mu, \nu)\leq d_q(\mu, \nu)$ for $p<q$.  We then recall the following result about narrow convergence and convergence in Wasserstein spaces. 
%\begin{lemma}\label{lemmaconv} Let $\mu_k, \mu\in \mathcal{P}(\R^d)$ such  that $\mu_k\to \mu$ narrowly. \begin{enumerate}
%\item Let $g:\R^d\to [0, +\infty]$ be lower semicontinuous. Then  \[\liminf_k \int_{\R^d} g(x)d\mu_k(x)\geq \int_{\R^d} g(x) d\mu(x). \] 
%\item Let $g:\R^d\to [0, +\infty)$, continuous and $\mu_k$-integrable, be such that \[\limsup_k \int_{\R^d} g(x)d\mu_k(x)\leq \int_{\R^d} g(x) d\mu(x)<\infty.\] 
%Then, $g$ is uniformly integrable with respect to $\mu_k$, that is
%\[\lim_{R\to +\infty} \sup_{k}\int_{\{x\ | g(x)\geq R\}}  g(x)d\mu_k(x)=0.\]\end{enumerate} 
%\end{lemma} \begin{proof}We refer to \cite[Lemma  5.1.7]{ags}.
%\end{proof} 
\begin{lemma}\label{equiconv}
  $\mathcal{P}_p(\R^d)$ endowed
with the p-Wasserstein distance is a separable complete  metric space.  A set $\mathcal{M}\subseteq \mathcal{P}_p(\R^d)$  is relatively compact if and only if it has uniformly integrable  $p$-moments, that is 
 \[\lim_{R\to +\infty} \sup_{\mu\in \mathcal{M}}\int_{\R^d\setminus B(0, R)} |x|^pd\mu(x)=0.\] 
Let  now $\mu_k,\mu\in \mathcal{P}_p(\R^d)$ for some $p\geq 1$. Then the statements below are equivalent:
\begin{enumerate}
\item $d_p(\mu_k, \mu)\to 0$
\item $\mu_k\to \mu$ narrowly and  $\mu_k$ have uniformly integrable $p$-moments.
\end{enumerate} 

Finally, for any $\nu\in \cP_p(\R^d)$, the map $\mu\to d_p(\mu, \nu)$ is lower semicontinuous with respect to narrow convergence. 

\end{lemma}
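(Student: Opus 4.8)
This is a classical package of facts about Wasserstein spaces, entirely contained in \cite[Chapters 5 and 7]{ags}, and the proof I would give rests on three external ingredients: Prokhorov's theorem (a family in $\mathcal{P}(\R^d)$ is relatively compact for narrow convergence iff it is tight), Skorokhod's representation theorem, and elementary manipulations of transport plans --- the gluing lemma, the lower semicontinuity of $\pi\mapsto\int|x-y|^p\,d\pi$ along narrowly convergent plans (obtained by writing $|x-y|^p$ as an increasing limit of bounded continuous functions), and the stability of marginals under narrow convergence. The plan is: (a) check that $d_p$ is a finite metric (finiteness from $|x-y|^p\le 2^{p-1}(|x|^p+|y|^p)$, triangle inequality from the gluing lemma together with Minkowski's inequality in $L^p$, and $d_p(\mu,\nu)=0\Rightarrow\mu=\nu$ since the optimal plan is then concentrated on the diagonal); (b) prove the equivalence (i)$\Leftrightarrow$(ii), which is the engine of the statement; (c) deduce the compactness characterization; (d) deduce separability and completeness; (e) prove the lower semicontinuity of $\mu\mapsto d_p(\mu,\nu)$.

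For (i)$\Rightarrow$(ii): if $\pi_k$ is optimal for $(\mu_k,\mu)$, Minkowski's inequality in $L^p(\pi_k)$ gives $\big|\,(\int|x|^p d\mu_k)^{1/p}-(\int|x|^p d\mu)^{1/p}\,\big|\le d_p(\mu_k,\mu)\to 0$, so the total $p$-moments converge; testing the definition of $d_p$ against bounded Lipschitz functions (and Hölder's inequality) yields narrow convergence. Narrow convergence together with convergence of the total $p$-moments forces uniform integrability by a Vitali-type argument: for $M>0$ the function $x\mapsto\min(|x|^p,M)$ is bounded continuous, so $\int(|x|^p-\min(|x|^p,M))\,d\mu_k\to\int(|x|^p-\min(|x|^p,M))\,d\mu$, the latter is $<\varepsilon$ for $M$ large (dominated convergence, $\mu\in\mathcal{P}_p$), and $|x|^p-\min(|x|^p,M)\ge\tfrac12|x|^p$ on $\{|x|^p\ge 2M\}$; the finitely many initial indices are treated separately. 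For (ii)$\Rightarrow$(i): the same Vitali argument run in reverse shows $\mu\in\mathcal{P}_p$ and $\int|x|^p d\mu_k\to\int|x|^p d\mu$; by Skorokhod's theorem take $X_k\sim\mu_k$, $X\sim\mu$ on a common probability space with $X_k\to X$ a.s.; then $d_p(\mu_k,\mu)^p\le\mathbb E|X_k-X|^p$, and since $|X_k-X|^p\to 0$ a.s. while $|X_k-X|^p\le 2^{p-1}(|X_k|^p+|X|^p)=:g_k$ with $g_k\to 2^p|X|^p=:g$ a.s. and $\mathbb E g_k\to\mathbb E g$, the generalized dominated convergence theorem gives $\mathbb E|X_k-X|^p\to0$.

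The compactness characterization is then formal: if $\mathcal M$ has uniformly integrable $p$-moments, $\sup_{\mu\in\mathcal M}\int|x|^p d\mu<\infty$, so $\mathcal M$ is tight (Markov) and narrowly relatively compact (Prokhorov); any sequence has a narrowly convergent subsequence whose limit lies in $\mathcal{P}_p$ (lsc of the $p$-moment), and the convergence is in $d_p$ by (ii)$\Rightarrow$(i) applied to the subsequence; conversely, if $\mathcal M$ were $d_p$-relatively compact with $p$-moments not uniformly integrable, one picks $\varepsilon>0$, $R_n\to\infty$, $\mu_n\in\mathcal M$ with $\int_{|x|\ge R_n}|x|^p d\mu_n\ge\varepsilon$, extracts $d_p(\mu_{n_j},\mu)\to0$, and contradicts (i)$\Rightarrow$(ii). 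Separability follows by approximating $\mu\in\mathcal{P}_p$ first by a compactly supported measure (move the mass outside $B_R$ onto the origin: cost $\le\int_{|x|\ge R}|x|^p d\mu\to0$), then by a finitely supported one with rational weights at rational points using a fine partition and estimating the transport cost directly. Completeness: a $d_p$-Cauchy sequence has bounded $p$-moments (the Minkowski estimate above) hence is tight, so a subsequence converges narrowly to some $\mu$; being Cauchy it also has uniformly integrable $p$-moments, so $\mu\in\mathcal{P}_p$ and $d_p(\mu_{k_j},\mu)\to0$ by (ii)$\Rightarrow$(i), and a Cauchy sequence with a convergent subsequence converges. Finally, for the lower semicontinuity: if $\mu_k\to\mu$ narrowly, pass to a subsequence realizing $L:=\liminf_k d_p(\mu_k,\nu)$ (assume $L<\infty$), let $\pi_k$ be optimal for $(\mu_k,\nu)$; the first marginals $(\mu_k)$ are tight and $\nu$ is fixed, so $(\pi_k)$ is tight, a further subsequence converges narrowly to some $\pi$, passing to the limit in the marginals gives $\pi\in\Pi(\mu,\nu)$, and lsc of the cost gives $d_p(\mu,\nu)^p\le\int|x-y|^p d\pi\le\liminf_k\int|x-y|^p d\pi_k=L^p$.

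The only point requiring more than soft facts about the narrow topology is the implication (ii)$\Rightarrow$(i) --- upgrading narrow convergence with uniformly integrable $p$-moments to Wasserstein convergence --- which is where one must invoke a representation/coupling argument (Skorokhod's theorem, or an explicit truncation of the optimal plans); everything else then follows formally from this equivalence together with Prokhorov's theorem and Markov's inequality.
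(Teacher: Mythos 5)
Your proof is correct, but the paper does not prove this lemma at all: it simply cites \cite[Prop.~7.1.5]{ags} for the metric/compactness statements and the equivalence (i)$\Leftrightarrow$(ii), and \cite[Prop.~7.1.3]{ags} for the lower semicontinuity, adding only the one-line observation that uniformly integrable $p$-moments imply tightness. What you have written is a faithful, self-contained reconstruction of the argument underlying those references — the one genuinely substantive step being (ii)$\Rightarrow$(i), for which AGS proceed by a truncation of the optimal plans rather than Skorokhod's theorem, a variant you yourself flag as equivalent; the rest (metric axioms via gluing and Minkowski, compactness via Prokhorov plus the equivalence, separability and completeness, lower semicontinuity via tightness of optimal plans and joint lower semicontinuity of the transport cost) matches the standard route.
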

\begin{proof} We refer to  \cite[Prop. 7.1.5]{ags}. Note that if $\mathcal{M}$ has uniformly integrable $p$-moments then it is tight, i.e. for all $\eps>0$ there exists 
 $K_\eps\subseteq \R^d$  compact for which  $\sup_{\mu\in \mathcal{M}}\int_{\R^d\setminus K_\eps}d\mu(x)\leq \eps$. 
 
 The lower semicontinuity of the Wasserstein distance is proved in \cite[Proposition 7.1.3]{ags}. 
\end{proof}
\begin{remark}\label{remequiconv} \upshape 
Note that, if for some $q>p$, 
 \[  \sup_{\mu\in \mathcal{M}}\int_{\R^d} |x|^q d\mu(x)<+\infty \] 
 then $\mathcal{M}$ has uniformly integrable $p$-moments. 
\end{remark} 

The following  semicontinuity and continuity property with respect to narrow convergence and convergence in $\cP_2(\R^d)$ respectively will be useful.
\begin{lemma}\label{lscW} 
Let   $\mu_k, \mu$ be Borel probability measures on $\R^d$ such that  $\mu_k\to \mu$ narrowly. Then
\[\liminf_k \int_{\R^d} W(x)\mu_k(dx)\geq \int_{\R^d} W(x)\mu(dx)\] for all lower semicontinuous functions $W$. 

If moreover   $W$ is as in \eqref{assw0} and  $\mu_k, \mu\in \mathcal{P}_2(\R^d)$ then 
\[\lim_k d_2(\mu_k,\mu)=0\quad \text{ if and only if  }\quad \lim_k\int_{\R^d} W(x) \mu_k(dx)= \int_{\R^d} W(x) \mu(dx).\] 
\end{lemma}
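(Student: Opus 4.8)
The plan is to treat the two assertions separately: the first is a classical lower-semicontinuity fact, while the equivalence rests on a uniform-integrability argument that exploits the two-sided quadratic control on $W$ in \eqref{assw0}. For the lower semicontinuity I would approximate $W$ from below by bounded continuous functions. Since $W$ is lower semicontinuous and non-negative, its Moreau--Yosida regularizations $W_n(x):=\inf_{y\in\R^d}\{W(y)+n|x-y|\}$ are $n$-Lipschitz, non-negative, non-decreasing in $n$, bounded above by $W$, and satisfy $W_n(x)\uparrow W(x)$ pointwise; truncating, $\widetilde W_n:=\min(W_n,n)\in C_b(\R^d)$ and still $\widetilde W_n\uparrow W$. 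Narrow convergence then gives $\int \widetilde W_n\,d\mu_k\to\int \widetilde W_n\,d\mu$ for each fixed $n$, and from $\int W\,d\mu_k\ge\int\widetilde W_n\,d\mu_k$ one obtains $\liminf_k\int W\,d\mu_k\ge\int\widetilde W_n\,d\mu$; letting $n\to\infty$ and using monotone convergence yields the claim (also when $\int W\,d\mu=+\infty$).

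For the equivalence I keep the standing hypothesis $\mu_k\to\mu$ narrowly, and note that $\int W\,d\mu<\infty$ since $\mu\in\cP_2(\R^d)$ and $W$ has quadratic growth. For each $R>0$ I fix a cutoff $\psi_R\in C_c(\R^d)$ with $0\le\psi_R\le1$, vanishing outside $B_R$ and equal to $1$ on $B_{R/2}$, so that $W\psi_R\in C_b(\R^d)$, $\chi_{\{|x|\ge R\}}\le1-\psi_R$, and $\psi_R\to1$ pointwise as $R\to\infty$. \emph{($\Rightarrow$)} If $d_2(\mu_k,\mu)\to0$, then by Lemma \ref{equiconv} the $\mu_k$ have uniformly integrable $2$-moments, whence, using $0\le W(x)\le C_W|x|^2+C_W$, the tails $\int_{|x|\ge R}W\,d\mu_k$ are small uniformly in $k$ for $R$ large (and likewise for $\mu$). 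Since $\int W\psi_R\,d\mu_k\to\int W\psi_R\,d\mu$ by narrow convergence and $|\int W\,d\nu-\int W\psi_R\,d\nu|\le\int_{|x|\ge R/2}W\,d\nu$, a standard three-$\varepsilon$ estimate gives $\int W\,d\mu_k\to\int W\,d\mu$.

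\emph{($\Leftarrow$)} This is the main point. Assume $\int W\,d\mu_k\to\int W\,d\mu$. From $\chi_{\{|x|\ge R\}}\le1-\psi_R$ we get $\int_{|x|\ge R}W\,d\mu_k\le\int W\,d\mu_k-\int W\psi_R\,d\mu_k$; passing to the limit in $k$ — narrow convergence for the compactly supported term $W\psi_R$, the hypothesis for the other — yields $\limsup_k\int_{|x|\ge R}W\,d\mu_k\le\int W(1-\psi_R)\,d\mu$, and then letting $R\to\infty$, with dominated convergence since $W\in L^1(\mu)$, makes the right-hand side vanish; absorbing the finitely many small indices into the supremum (each $\mu_k\in\cP_2$, so $W\in L^1(\mu_k)$), we obtain $\sup_k\int_{|x|\ge R}W\,d\mu_k\to0$. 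Finally, the lower bound $C_W^{-1}|x|^2-C_W\le W(x)$ gives $|x|^2\le2C_WW(x)$ for $|x|$ large, so the $2$-moments of the $\mu_k$ are uniformly integrable as well; together with the assumed narrow convergence, Lemma \ref{equiconv} then delivers $d_2(\mu_k,\mu)\to0$.

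I expect the step requiring the most care to be the $(\Leftarrow)$ implication: narrow convergence alone cannot prevent mass from escaping to infinity, so one must convert convergence of the energies $\int W\,d\mu_k$ into uniform integrability of second moments. This is exactly where both halves of the quadratic bound on $W$ enter — the upper bound to make $W\psi_R$ bounded and $W$ $\mu$-integrable, the lower (coercivity) bound to recover control of $|x|^2$ from $W$.
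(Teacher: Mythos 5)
Your proof is correct and self-contained. Note that the paper does not actually give its own argument here: it merely cites \cite[Lemma 5.1.7]{ags} for the lower semicontinuity and \cite[Lemma 5.7]{ccbrake} for the equivalence, so there is no in-text proof to compare against. What you wrote is in substance the standard argument that these references contain: monotone approximation from below by bounded Lipschitz functions (your Moreau--Yosida regularization plus truncation is exactly the construction used in Ambrosio--Gigli--Savar\'e), and for the equivalence a cutoff argument in which the $(\Rightarrow)$ implication uses the upper quadratic bound on $W$ to control tails from uniformly integrable second moments, while the $(\Leftarrow)$ implication uses the lower quadratic (coercivity) bound to convert the tightness of the energies $\int W\,d\mu_k$ back into uniform integrability of second moments, after which Lemma~\ref{equiconv} closes the loop. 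Your handling of the two asymmetric steps — dominated convergence with majorant $W\in L^1(\mu)$ when sending $R\to\infty$, and ``absorbing the finitely many small indices'' to pass from $\limsup_k$ to $\sup_k$ — is exactly right, and these are the points that a sloppy proof would miss.

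One small remark on the first assertion: as literally stated (``for all lower semicontinuous functions $W$'') it is false without a lower bound on $W$, since mass can escape to regions where $W\to-\infty$ while preserving narrow convergence. You correctly read in the standing hypothesis $W\ge 0$ from \eqref{assw0} (the cited \cite[Lemma 5.1.7]{ags} requires $W$ lsc and bounded below), and your Moreau--Yosida construction uses it; it is worth being explicit that the statement implicitly carries this hypothesis. Also, a minor computation: from $C_W^{-1}|x|^2-C_W\le W(x)$ one gets $|x|^2\le C_W W(x)+C_W^2$, so $|x|^2\le 2C_W W(x)$ holds once $|x|\ge \sqrt{2}\,C_W$; your phrase ``for $|x|$ large'' is correct but the threshold is worth pinning down if you include this in a writeup.
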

\begin{proof} For the proof we refer to  \cite[Lemma 5.1.7]{ags} and \cite[Lemma 5.7]{ccbrake}. 
\end{proof}

  Finally,  we recall some results that will be useful in the following about the functional 
  \[(\mu, w)\to \int_{\R^d}\frac12\left|\frac{dw}{ d\mu}\right|^2 \mu(dx) \] defined on couples $(\mu, w)$ such that $\mu\in \cP_1(\R^d)$ and $w$ is  Borel $d$-vector measure  on $ \R^d$, absolutely continuous w.r.t.   $\mu(dx)$.  
 First of  all observe that by H\"older inequality the total variation of $|w|$ satisfies
 \begin{equation}\label{tv} |w|(\R^d)\leq \left( \int_{\R^d}\left|\frac{dw}{ d\mu}\right|^2 \mu(dx)\right)^{1/2}\end{equation} and moreover the functional is (joint) lower semicontinuous with respect to narrow convergence of measures.
 \begin{lemma}\label{lemmalsc} Let $\mu_k, \mu\in \mathcal{P}(\R^d)$, with $\mu_k\to \mu$ narrowly. Let $w_k, w\in \mathcal{P}(\R^d, \R^d)$ be Borel vector measures such that $w_k\to w$ narrowly and $w_k, w$ are  absolutely continuous with respect to 
 $dt \otimes \mu_k(t,dx)$ and $dt \otimes \mu(t,dx)$, respectively. Then
 \[\liminf_k \int_{\R^d}\left|\frac{dw_k}{d \mu_k}\right|^2 d\mu_k\geq  \int_{\R^d}\left|\frac{dw}{ d\mu}\right|^2 d\mu. \]
 \end{lemma}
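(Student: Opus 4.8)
The plan is to realize the functional $\mathcal A(\mu,w):=\int_{\R^d}\bigl|\tfrac{dw}{d\mu}\bigr|^2\,d\mu$ as a supremum of affine functionals of $(\mu,w)$ that are \emph{continuous} for narrow convergence; lower semicontinuity is then automatic, being inherited by any supremum of continuous functionals. One could alternatively quote directly the lower semicontinuity of action functionals, e.g. \cite[Lemma 9.4.3]{ags}, but the self‑contained argument is short.

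As a preliminary reduction, since the asserted inequality is trivial when $\liminf_k\mathcal A(\mu_k,w_k)=+\infty$, I may pass to a subsequence realizing the liminf and, discarding finitely many terms, assume $\sup_k\mathcal A(\mu_k,w_k)<\infty$; then \eqref{tv} bounds $|w_k|(\R^d)$ uniformly, and by lower semicontinuity of the total variation under narrow convergence $|w|(\R^d)<\infty$ as well. So it is enough to work with vector measures of uniformly bounded mass, absolutely continuous w.r.t.\ the corresponding probability measure.

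The heart of the matter is the duality identity
\[
\mathcal A(\mu,w)=\sup\Bigl\{\,2\!\int_{\R^d}\phi\cdot dw-\int_{\R^d}|\phi|^2\,d\mu\ :\ \phi\in C_b(\R^d;\R^d)\Bigr\},
\]
valid whenever $\mu\in\mathcal P(\R^d)$, $w\ll\mu$ and $|w|(\R^d)<\infty$. The inequality ``$\ge$'' follows by integrating against $\mu$ the elementary pointwise bound $2\,\phi(x)\cdot p(x)-|\phi(x)|^2\le|p(x)|^2$, where $p:=\tfrac{dw}{d\mu}$. For ``$\le$'' I would use the identity $2\int\phi\cdot dw-\int|\phi|^2\,d\mu=\int|p|^2\,d\mu-\int|\phi-p|^2\,d\mu$ and approximate $p$ (which lies in $L^1(\mu)$, and in $L^2(\mu)$ precisely when $\mathcal A(\mu,w)<\infty$) in $L^2(\mu)$ by bounded continuous fields; when $\mathcal A(\mu,w)=+\infty$ one runs the same approximation on the truncations $p\,\mathbf 1_{\{|p|\le R\}}$ and lets $R\to\infty$, making the right‑hand side exceed any prescribed level. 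This approximation step — Lusin/mollification of an $L^2(\mu)$ field together with the bookkeeping for the infinite case — is the only genuinely technical point; everything else is elementary.

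Granting the duality identity, the conclusion is immediate. For each fixed $\phi\in C_b(\R^d;\R^d)$ the map $(\mu,w)\mapsto 2\int\phi\cdot dw-\int|\phi|^2\,d\mu$ is continuous for narrow convergence (as $\phi$ and $|\phi|^2$ are bounded continuous), so, applying the ``$\ge$'' half of the identity to each pair $(\mu_k,w_k)$,
\[
2\!\int\phi\cdot dw-\int|\phi|^2\,d\mu=\lim_k\Bigl(2\!\int\phi\cdot dw_k-\int|\phi|^2\,d\mu_k\Bigr)\le\liminf_k\mathcal A(\mu_k,w_k).
\]
Taking the supremum over $\phi$ and invoking the identity once more, this time for $(\mu,w)$, gives $\mathcal A(\mu,w)\le\liminf_k\mathcal A(\mu_k,w_k)$, as desired.
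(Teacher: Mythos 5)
Your argument is correct; it is a self-contained reconstruction of precisely the convex-duality route underlying \cite[Lemma 9.4.3]{ags}, which the paper's proof simply cites without reproducing. The only points needing a touch more care are the density of bounded continuous vector fields in $L^2(\mu;\R^d)$ (for the ``$\le$'' half of the duality identity) and the uniform $L^\infty$ bound on the approximants in the truncation step when $\tfrac{dw}{d\mu}\notin L^2(\mu)$, so that $\int\phi\cdot dw$ can be passed to the limit by dominated convergence; both are standard and you flag them honestly.
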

 \begin{proof}
The result is proved in   \cite[Lemma 9.4.3]{ags}. 
 \end{proof} 
 The  evolutive version of the previous functional is given by 
  \[(\mu, w)\to \int_{-T/2}^{T/2}\int_{\R^d}\frac12\left|\frac{dw}{dt\otimes \mu(t, dx)}\right|^2 \mu(t,dx)dt\]
  defined on couples $(\mu, w)$ such that $\mu \in  C(\R, \cP_1(\R^d))$, 
$ w$  is a  Borel $d$-vector measure  on $\R\times \R^d$, absolutely continuous w.r.t.  
 $dt \otimes \mu(t,dx)$. 
 We recall the following uniform continuity property. 
 \begin{proposition} Let  $(\mu, w)$ as before and assume that   $ -\partial_t m + {\rm div}(w) = 0$ in the sense of distributions.  Then 
\begin{equation}\label{unmezzoh}
d_2^2(\mu(t), \mu(s)) \le |t-s|  \int_{t_1}^{t_2} \int_{\R^d}\frac12\left|\frac{dw}{dt\otimes \mu(\tau,dx)}\right|^2 \mu(\tau,dx)d\tau, \qquad \forall t,s \in (t_1, t_2).
\end{equation} 
 \end{proposition}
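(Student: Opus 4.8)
The plan is to recognize \eqref{unmezzoh} as a quantitative version of the classical fact (Ambrosio--Gigli--Savar\'e) that a distributional solution of the continuity equation with locally finite kinetic energy is a locally absolutely continuous curve in the Wasserstein space, whose metric derivative is controlled by the $L^2$-norm of the velocity. Denote by $v_\tau:=\frac{dw}{dt\otimes\mu(\tau,dx)}$ the velocity field of the pair $(\mu,w)$, and fix $t_1<t_2$. We may assume $E:=\int_{t_1}^{t_2}\int_{\R^d}\tfrac12|v_\tau|^2\,\mu(\tau,dx)\,d\tau<\infty$, otherwise the right-hand side of \eqref{unmezzoh} is infinite and there is nothing to prove. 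First I would observe, by Cauchy--Schwarz on the \emph{bounded} interval $(t_1,t_2)$, that
\[
\int_{t_1}^{t_2}\Big(\int_{\R^d}|v_\tau|^2\,\mu(\tau,dx)\Big)^{1/2}d\tau\;\le\;\big(2(t_2-t_1)E\big)^{1/2}<\infty,
\]
so that $v$ is integrable in time with values in $L^2(\mu(\tau,\cdot))$ on $(t_1,t_2)$. This $L^1$-in-time, $L^2$-in-space bound (rather than an $L^2$-in-time bound) is exactly the hypothesis needed below, and this is why the right-hand side of \eqref{unmezzoh} is an integral over the finite window $(t_1,t_2)$.

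Next I would invoke \cite[Theorem 8.3.1]{ags} for the narrowly continuous curve $\tau\mapsto\mu(\tau)$, which solves $-\partial_t\mu+\operatorname{div}(w)=0$ on $(t_1,t_2)$ with velocity $v_\tau$ enjoying the integrability just established (and taking values in $\cP_2(\R^d)$, as in the applications, where $m\in C(\R,\cP_2^r)$; in any case finiteness of $E$ together with one finite second moment propagates the $\cP_2$ property along the flow). The conclusion is that the restriction of $\mu$ to $(t_1,t_2)$ is absolutely continuous in $\cP_2(\R^d)$ and its metric derivative obeys
\[
|\mu'|(\tau)\;\le\;\Big(\int_{\R^d}|v_\tau|^2\,\mu(\tau,dx)\Big)^{1/2}\qquad\text{for a.e. }\tau\in(t_1,t_2);
\]
no identification of representatives is needed, since $\mu$ is already given continuous.

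It then remains to combine two elementary facts. For $s<t$ in $(t_1,t_2)$ the length of the arc $\mu|_{[s,t]}$ dominates the distance between its endpoints, i.e.\ $d_2(\mu(t),\mu(s))\le\int_s^t|\mu'|(\tau)\,d\tau$; inserting the bound on $|\mu'|$ and applying Cauchy--Schwarz in $\tau$ once more,
\[
d_2(\mu(t),\mu(s))\;\le\;\int_s^t\Big(\int_{\R^d}|v_\tau|^2\,\mu(\tau,dx)\Big)^{1/2}\!d\tau\;\le\;|t-s|^{1/2}\Big(\int_s^t\!\!\int_{\R^d}|v_\tau|^2\,\mu(\tau,dx)\,d\tau\Big)^{1/2}\!.
\]
Squaring and enlarging the time integral from $(s,t)$ to $(t_1,t_2)$ yields $d_2^2(\mu(t),\mu(s))\le|t-s|\int_{t_1}^{t_2}\int_{\R^d}|v_\tau|^2\,\mu(\tau,dx)\,d\tau$, which is \eqref{unmezzoh}; the case $t<s$ is symmetric.

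There is no genuine obstacle here beyond bookkeeping: the only point deserving attention is the integrability reduction of the first paragraph, which makes \cite[Thm.~8.3.1]{ags} applicable and localizes the estimate to $(t_1,t_2)$. If one preferred a self-contained argument avoiding the citation, one could instead rescale time affinely from $[s,t]$ to $[0,1]$, observe that the rescaled pair is admissible in the Benamou--Brenier dynamic formulation of $d_2$, and read off $d_2^2(\mu(t),\mu(s))\le|t-s|\int_s^t\int_{\R^d}|v_\tau|^2\,\mu(\tau,dx)\,d\tau$ directly; enlarging the interval again gives \eqref{unmezzoh}.
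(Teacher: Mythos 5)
Your argument is essentially the same as the paper's, which simply cites \cite[Theorem 8.3.1]{ags}; the extra steps you supply (the Cauchy--Schwarz reduction from $L^2_t$ to $L^1_t$ integrability so that the theorem is applicable, the arc-length bound $d_2(\mu(t),\mu(s))\le\int_s^t|\mu'|(\tau)\,d\tau$, and the final Cauchy--Schwarz in $\tau$) are exactly the standard unwinding of that theorem's conclusion, and the Benamou--Brenier time-rescaling you sketch at the end is an equally standard alternative. So there is no difference of method.

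There is, however, one discrepancy worth flagging. Your chain of inequalities yields
\[
d_2^2(\mu(t),\mu(s))\;\le\;|t-s|\int_{t_1}^{t_2}\int_{\R^d}|v_\tau|^2\,\mu(\tau,dx)\,d\tau,
\]
whereas \eqref{unmezzoh} as printed has an additional factor $\tfrac12$ inside the integral, making its right-hand side \emph{half} of yours; so what you derive is not literally \eqref{unmezzoh}, and you should not assert that it is. The bound you obtain is the one that actually follows from \cite[Theorem 8.3.1]{ags}, and the version with the $\tfrac12$ cannot hold in general: for a constant-speed Wasserstein geodesic on $[s,t]=[t_1,t_2]$ the right-hand side of \eqref{unmezzoh} would equal $\tfrac12 d_2^2(\mu(t),\mu(s))$, making the inequality false unless $\mu(t)=\mu(s)$. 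Tellingly, when the paper invokes this estimate later (e.g.\ the equicontinuity bounds in Step~3 of the proof of Theorem \ref{gamma}), it uses the version \emph{without} the $\tfrac12$. So \eqref{unmezzoh} is almost certainly a typographical slip (either the $\tfrac12$ should be dropped, or a $\tfrac12$ should also appear on the left); your proof establishes the correct inequality, which is the one used in the sequel.
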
 
 \begin{proof}  This result is proved in  \cite[Theorem 8.3.1]{ags}. 
 \end{proof} 

\subsection{Systems with $N$ agents}
Let $\bx = (x^1, \ldots, x^N) \in (\R^d)^N$, which represents the states of $N$ agents.  We  associate to 
the vector $\bx$ the empirical measure 
\begin{equation}\label{empirical}
m^N_\bx = \frac1N\sum_{i=1}^N \delta_{x^i}.
\end{equation} 

 We have the following convergence result. 
\begin{proposition}\label{convparticelle}  
Let $\bx \in (\R^d)^N$. Suppose that $m^N_\bx$ converges narrowly to some $\mu$  as $N \to \infty$ and 
 that for some $c > 0$  not depending on $N$ there holds   \begin{equation}\label{mindist}
|x^i -x^j| \ge \frac c{N^{1/d}} \qquad \text{for all $i \neq j$}.
\end{equation}
  Then   $\mu$ has a density $m\in L^\infty(\R^d)$, and $\|m\|_{\infty}\leq 2^d c^{-d}\omega_d^{-1}$.
 
\end{proposition}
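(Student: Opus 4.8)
The plan is to first derive, from the separation condition \eqref{mindist}, a scale-invariant upper bound $\mu(B)\le 2^d c^{-d}\omega_d^{-1}\,|B|$ for every ball $B\subset\R^d$, and then to upgrade this to the claimed $L^\infty$ bound on the density by a covering argument together with the Lebesgue differentiation theorem.

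First I would fix a ball $B(x_0,R)$ and set $r:=\frac{c}{2N^{1/d}}$, so that $Nr^d=2^{-d}c^d$. By \eqref{mindist} the open balls $B(x^i,r)$, $i=1,\dots,N$, are pairwise disjoint; moreover each $B(x^i,r)$ with $x^i\in B(x_0,R)$ is contained in $B(x_0,R+r)$. Summing Lebesgue measures over the disjoint balls whose centres lie in $B(x_0,R)$ gives $\#\{i:x^i\in B(x_0,R)\}\,\omega_d r^d\le \omega_d(R+r)^d$, whence
\[
m^N_\bx\big(B(x_0,R)\big)=\frac1N\,\#\{i:x^i\in B(x_0,R)\}\le \frac{(R+r)^d}{Nr^d}=\frac{2^d}{c^d}(R+r)^d .
\]

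Next I would pass to the limit $N\to\infty$. Since $B(x_0,R)$ is open and $m^N_\bx\to\mu$ narrowly, the Portmanteau theorem gives $\mu(B(x_0,R))\le\liminf_N m^N_\bx(B(x_0,R))$; and as $r=r_N\to0$ the right-hand side of the displayed inequality tends to $\frac{2^d}{c^d}R^d$. Hence
\[
\mu\big(B(x_0,R)\big)\le\frac{2^d}{c^d}R^d=\frac{2^d}{c^d\omega_d}\,\big|B(x_0,R)\big|\qquad\text{for all }x_0\in\R^d,\ R>0 .
\]
By the Vitali covering lemma this ball estimate extends to $\mu(U)\le\frac{2^d}{c^d\omega_d}|U|$ for every open set $U$, and then outer regularity of $\mu$ forces $\mu(A)=0$ whenever $|A|=0$; thus $\mu=m\,dx$ with $m\in L^1(\R^d)$, $m\ge0$. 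Finally, by the Lebesgue differentiation theorem $m(x)=\lim_{R\to0^+}\mu(B(x,R))/|B(x,R)|\le 2^d c^{-d}\omega_d^{-1}$ for a.e.\ $x$, which is the asserted bound.

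The packing estimate itself is elementary; the only points that need a little care are to use \emph{open} balls so that the Portmanteau inequality points in the right direction (an upper bound on $\mu$), and to invoke the covering/differentiation machinery in order to convert the scale-invariant ball estimate into a genuine pointwise $L^\infty$ bound. I do not expect any serious obstacle in this argument.
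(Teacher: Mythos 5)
Your argument is correct, and it follows the same underlying geometric idea as the paper (surround each $x^i$ by a disjoint ball of radius $\tfrac{c}{2N^{1/d}}$, whose Lebesgue measure is exactly $\omega_d c^d/(2^d N)$, and use that disjointness to bound the number of points in a region). Where you differ is in how you convert this packing estimate into the $L^\infty$ bound on the density. The paper works directly with test functions: it writes $\int\phi\,dm^N_\bx=\sum_i\phi(x^i)/N$, replaces $\phi(x^i)$ by its average over $B(x^i,\delta/2)$ paying a small error controlled by the Lipschitz constant of $\phi$, passes to the limit to get $\int\phi\,d\mu\le\frac{2^d}{c^d\omega_d}\int\phi$ for nonnegative Lipschitz $\phi$, and then inserts $\phi=\chi_A\star\rho$ for a mollifier $\rho$ to obtain $\mu(A)\le\frac{2^d}{c^d\omega_d}|A|$. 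You instead apply the packing bound directly to indicator functions of open balls, invoke Portmanteau (lower semicontinuity of open-set mass under narrow convergence) to transfer the bound to $\mu$, and then use the Vitali/covering plus Lebesgue differentiation machinery to extract the density and the $L^\infty$ bound. Both routes are fully rigorous; yours is a touch more ``measure-theoretic'' and avoids mollifiers, while the paper's is more self-contained and avoids differentiation theory. One minor point worth being a bit more explicit about: the step where the ball estimate yields $\mu\ll\mathcal L^d$ via the covering lemma is the delicate one (Vitali gives a disjoint subcover of an open set only up to a Lebesgue-null set, so a priori you need a small extra argument, e.g.\ a $5r$-covering lemma or covering a compact null set by finitely many balls of small total measure, to conclude that Lebesgue-null sets are $\mu$-null before invoking Lebesgue differentiation). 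With that spelled out, the proof is complete.
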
 
\begin{proof}  Note that by narrow convergence $\int_{\R^d} \mu(dx)=1$. We prove that $\mu$ has a density. Let  \begin{equation}\label{defde}\delta: = \frac c{N^{1/d}}\end{equation}  and $\phi\in C_b(\R^d)$, globally Lipschitz continuous and with $\phi\geq 0$. Observe that
for every $N$, using the bound \eqref{mindist} and  \eqref{defde}, 
\begin{eqnarray*} \int_{\R^d}\phi(y)m^N_\bx(dy)& =&\sum_i \frac{\phi(x^i)}{N}=\sum_i \frac{2^d}{\omega_d c^d}\int_{B(x^i, \delta/2)}[\phi(x^i)-\phi(y)+\phi(y)] dy\\ &\leq &
\frac{2^d}{c^d\omega_d}  \sum_i \int_{B(x^i, \delta/2)} \phi(y) dy+  \frac{2^d}{c^d\omega_d} \|\nabla \phi\|_\infty  \frac{\delta}{2} \omega_d \frac{\delta^d}{2^d} N\\ &\leq& \frac{2^d}{c^d\omega_d}\int_{\R^d}\phi(y)dy+\frac{c\|\nabla \phi\|_\infty}{2N^{1/d}}. 
\end{eqnarray*} 
By narrow convergence we conclude that $\int_{\R^d} \phi(y)d\mu(y)\leq  \frac{2^d}{c^d\omega_d}\int_{\R^d}\phi(y)dy$ for any $\phi\in C_b(\R^d)$, with $\phi\geq 0$ and globally Lipschitz.  So, for every measurable set $A$ of finite measure, and every nonnegative smooth mollifier $\rho$ we get that $\int_{\R^d} \chi_A\star \rho (y)  d\mu(y)\leq \frac{2^d }{c^d\omega_d}\int_{\R^d} \chi_A\star \rho (y)  dy$, which implies $ \mu(A)\leq \frac{2^d }{c^d\omega_d}  |A|$.
 This in particular gives that $\mu$ has a density $m$ and that $\|m\|_\infty\leq \frac{2^d}{\omega_d c^d}$.
\end{proof} 

\begin{remark}\label{rem1dconvparticelle} \upshape In dimension $d=1$, we may assume without loss of  generality that $x^1\leq x^2\leq\dots\leq x^N$. Observe that given  $\bx \in \R^N$ as in Proposition \ref{convparticelle} which satisfy in addition $|x^i-x^{i+1}|= \frac{c}{N}$ for all $i=1, \dots, N-1$, then there exists $a \in \R$ such that $\mu=c^{-1}\chi_{(a,a+c)}$. 

Indeed, using that $x^{i+1}-x^i=\frac{c}{N}$ we get that  $\text{supp} \ m_{\bx}^N= \left[x^1, x^1+\frac{c(N-1)}{N}\right]\subseteq [x^1, x^1+c]$ for all $N$.  Therefore since by assumption $m^N_\bx$ converges narrowly to some $\mu$, we get that, eventually passing  to a subsequence, $x^1\to a$ as $N\to +\infty$ and $\text{supp} \mu \subseteq [a, a+c]$. Finally, since $\mu(dx)=m(x)dx$ and $0\leq m(x)\leq c^{-1}$, then necessarily $\mu=c^{-1}\chi_{(a, a+c)}$. 
\upshape 
\end{remark} 
If $\bx_t \in W^{1,2}_{\rm per}((-T/2,T/2); (\R^d)^N)$, we associate to it the curve of probability measures (representing the dynamic state of $N$ agents)
\begin{equation}\label{empirical2}
m^N_{\bx}(t) = \frac1N\sum_{i=1}^N \delta_{x^i_t} 
\end{equation}  and to $\dot \bx_t$ the momentum measures 
\begin{equation}\label{empiricalspeed}
 w^N_\bx(t):= \frac1N\sum_{i=1}^N\dot x_t^i \delta_{x^i_t}, \qquad \widetilde w^N_\bx := \int_{-T/2}^{T/2}\delta_t \otimes w^N_\bx(t) dt \in \mathcal M([-T/2,T/2] \times \R).
\end{equation} 
Note that $\tilde w^N_\bx(t)$ is absolutely continuous with respect to $dt\otimes m_{\bx}^N(t)$, with density $\frac{dw^N_\bx(t)}{dm_{\bx}^N(t)}=\sum_{i=1}^N\dot x_t^i \chi_{x=x^i_t}$. Moreover,
\begin{equation}\label{form1} \int_\R \left|\frac{dw^N_\bx(t)}{dm_{\bx}^N(t)}\right|^2 m_{\bx}^N(t)(dx) = \frac{1}{N}\sum_{i=1}^N  |\dot{x}^i_t|^2 \qquad \text{for a.e. $t \in (-T/2,T/2)$}, 
\end{equation}
see e.g. \cite[Lemma 6.2]{flos}. 
\begin{remark}\label{remreg} \upshape Note that in particular, by \eqref{unmezzoh} and \eqref{form1}, we get that if $\frac{1}{N}\sum_{i=1}^N\int_{-T/2}^{T/2}  |\dot{x}^i_t|^2dt \leq C $, then
$d_2^2(m^N_\bx(t), m^N_\bx(s)) \leq C (t-s)$ for all $-T/2\leq s<t\leq T/2$. \end{remark} 
 
\subsection{The interaction energy} 

We define the energy interaction functional for $m\in L^1(\R^d)$ 
\begin{equation}\label{int} \mathcal{I}(m)=\int_{\R^d}\int_{\R^d} m(x)m(y) K(|x-y|)dxdy =\int_{\R^d} m(x)V_m(x)dx
\end{equation} 
 where $V_m$ is the interaction potential 
\begin{equation}\label{pot} V_m(x)=m*K(x)= \int_{\R^d}  m(y) K(|x-y|)dy.
\end{equation}  It is well known that if $m\in L^1(\R^d)\cap L^\infty(\R^d)$ then $V_m\in C(\R^d)$ and $\lim_{|x|\to +\infty} V_m(x)=0$ (see \cite[Lemma 5.2]{ccbrake}).
%We recall a  classical result about maximizers of $\mathcal{I}(m)$,  based on the Riesz rearrangement inequality, see \cite{LLbook}. 
%The quantitative version of the Riesz inequality has been obtained recently in \cite{fp}. 
%\begin{lemma}\label{cKbounds} Assume \eqref{assK}.
%Let $r_1=(1/\omega_d)^{1/d}$. 
%There holds, for all $p\geq 1$, 
%\[\sup_{m\in \cP_{p}^r}\mathcal{I}(m)=\mathcal{I}(\chi_{B_{r_1}})> 0.\]
%
%If moreover  $d\geq 2$,  $K(r)= r^{\alpha-d}$ for $1<\alpha<d$,  there exists a constant $C=C(d,\alpha)>0$ such that for all measurable sets $E\subseteq \R^d$ with $|E|=1/\rho$, there holds 
%\[ \mathcal{I}(\rho\chi_{B_{r_\rho}})-\mathcal{I} (\rho\chi_E) \geq \rho^{3-\alpha/d} \inf_{x\in \R^d} |E\Delta B(x, r_\rho)|^2. \]
%\end{lemma}

With a slight abuse of notations, $\cI$ can be evaluated at $m^N_\bx= \frac1N\sum_{i=1}^N \delta_{x^i}$ if $x^i\neq x^j$ for $i\neq j$,  in the sense that
\begin{equation}\label{equivI}
\cI(m^N_\bx) = \iint_{\R^{2d} \setminus \Delta} K(|x-y|) m^N_\bx(dx) m^N_\bx(dy) = \frac1{N^2} \sum_{  i \neq j } K(|x^i-x^j|),
\end{equation}
where $\Delta = \{(x,y) \in \R^d \times \R^d : x = y \}$.

We will use the following continuity property of the interaction functional. 
\begin{proposition}\label{proplimite} 
Suppose that $\bx \in (\R^d)^N$ satisfies \eqref{mindist} for some $c$ not depending on $N$. 
 
Then,
\[
\cI(m^N_\bx) \le C_{K} \]
for some positive $C_K$ that depends on $c, d, K$ (and not on $\bx, N$).

Assume moreover that $m^N_\bx$ converges narrowly to some $\mu$  as $N \to \infty$.  Then \[\lim_N \cI(m^N_\bx)= \cI(m)\] where    $m\in L^\infty(\R^d)$ is the density of $\mu$, given  by Proposition \ref{convparticelle}.  \end{proposition}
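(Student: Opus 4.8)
The plan is to prove both assertions from a single packing estimate that turns interaction sums into integrals against $s^{d-1}K(s)$, which is integrable near $0$ by \eqref{assK}. Fix $i$ and set $\delta=\delta_N:=cN^{-1/d}$; by \eqref{mindist} the balls $B(x^j,\delta/2)$, $j=1,\dots,N$, are pairwise disjoint and each is disjoint from $B(x^i,\delta/2)$. For $j\neq i$ let $r_{ij}:=|x^i-x^j|\ge\delta$ and $p:=x^j+\tfrac{\delta}{4}\tfrac{x^i-x^j}{|x^i-x^j|}$; one checks directly that $B(p,\delta/4)\subseteq B(x^j,\delta/2)\cap\{\,|y-x^i|<r_{ij}\,\}$, so on this set $K(|x^i-y|)\ge K(r_{ij})$ since $K$ is nonincreasing. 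Averaging over $B(p,\delta/4)$, enlarging the domain using $K\ge0$, summing over the $j\neq i$ with $r_{ij}\le R$ (disjoint balls, all contained in $B(x^i,R+\delta/2)\setminus B(x^i,\delta/2)$), and using $|B(0,\delta/2)|=\omega_d c^d2^{-d}N^{-1}$, I obtain
\begin{equation}\label{packing}
\frac1N\sum_{j\neq i:\ r_{ij}\le R}K(|x^i-x^j|)\ \le\ \frac{4^dd}{c^d}\int_{\delta_N/2}^{\,R+\delta_N/2}s^{d-1}K(s)\,ds\ \le\ \frac{4^dd}{c^d}\int_0^{\,R+c/2}s^{d-1}K(s)\,ds,
\end{equation}
valid for every $R>0$, uniformly in $\bx,N,i$ (the last bound uses $\delta_N\le c$).

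For the uniform bound I would split $\{j\neq i\}$ into $r_{ij}\le1$ and $r_{ij}>1$: the first group is controlled by \eqref{packing} with $R=1$; on the second $K(r_{ij})\le K(1)<\infty$ with at most $N$ indices, so $\tfrac1N\sum_{r_{ij}>1}K(r_{ij})\le K(1)$. Averaging also in $i$ yields $\cI(m^N_\bx)\le C_K:=K(1)+\tfrac{4^dd}{c^d}\int_0^{1+c/2}s^{d-1}K(s)\,ds$, depending only on $c,d,K$.

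For the convergence, assume $m^N_\bx\to\mu$ narrowly, so by Proposition \ref{convparticelle} $\mu=m\,dx$ with $m\in L^\infty$. For $M>0$ put $K_M:=\min(K,M)$; after redefining $K(0):=\lim_{r\to0^+}K(r)$ (which affects neither $\cI(m^N_\bx)$, that never sees $K$ at $0$, nor $\cI(m)$, since the diagonal is Lebesgue-null), the map $(x,y)\mapsto K_M(|x-y|)$ is bounded and continuous on $\R^{2d}$. I then decompose $\cI(m^N_\bx)=\tfrac1{N^2}\sum_{i\neq j}K_M(|x^i-x^j|)+R_N^M$ with $R_N^M:=\tfrac1{N^2}\sum_{i\neq j}(K(|x^i-x^j|)-M)^+$. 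Since narrow convergence of $m^N_\bx$ implies narrow convergence of $m^N_\bx\otimes m^N_\bx$ to $\mu\otimes\mu$ and the diagonal correction $K_M(0)/N$ is negligible, the first term tends to $\cI_M(m):=\iint_{\R^{2d}}K_M(|x-y|)m(x)m(y)\,dx\,dy$. For the remainder, $(K-M)^+$ vanishes on $[r_M,+\infty)$ with $r_M:=\inf\{r>0:K(r)\le M\}$ and is $\le K$ below $r_M$, so \eqref{packing} with $R=r_M$ gives $0\le R_N^M\le\tfrac{4^dd}{c^d}\int_{\delta_N/2}^{\,r_M+\delta_N/2}s^{d-1}K(s)\,ds$, whose $\limsup$ in $N$ equals $\tfrac{4^dd}{c^d}\int_0^{r_M}s^{d-1}K(s)\,ds$. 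Hence for every $M$, $\cI_M(m)\le\liminf_N\cI(m^N_\bx)\le\limsup_N\cI(m^N_\bx)\le\cI_M(m)+\tfrac{4^dd}{c^d}\int_0^{r_M}s^{d-1}K(s)\,ds$. Letting $M\to+\infty$: $r_M\to0$ so the last term vanishes by \eqref{assK}, $\cI_M(m)\uparrow\cI(m)$ by monotone convergence, and $\cI(m)\le C_K<\infty$ by the previous step; therefore $\cI(m^N_\bx)\to\cI(m)$.

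The main obstacle is the packing estimate \eqref{packing}, and in particular retaining the inner radius $\delta_N/2$ in the lower limit of integration: because $\delta_N=cN^{-1/d}\to0$, this is exactly what forces the truncation tail $R_N^M$ to converge, as $N\to\infty$, to $\tfrac{4^dd}{c^d}\int_0^{r_M}s^{d-1}K$, which in turn vanishes as $M\to\infty$; a cruder estimate would not close this double limit. The scaling in \eqref{mindist} enters twice — to ensure $r_{ij}\ge\delta_N$ (needed for the ball inclusion) and to produce the factor $N$ in $|B(0,\delta/2)|^{-1}$ that cancels the $1/N$ of the empirical average — which is why the argument is genuinely tied to the constraint $|x^i-x^j|\ge cN^{-1/d}$.
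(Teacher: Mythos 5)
Your proof is correct and follows essentially the same strategy as the paper's: a ball--packing/averaging argument to control local interaction sums by integrals of $s^{d-1}K(s)$ near the origin, followed by a truncation of $K$ plus narrow convergence of $m^N_{\bx}\otimes m^N_{\bx}$ for the passage to the limit. The only (cosmetic) differences are that you truncate in the range, $K_M=\min(K,M)$, where the paper truncates in the domain, $K_r(t)=K(\max(t,r))$ — the two coincide under $M=K(r)$ — and that you use monotone convergence to recover $\cI(m)$ where the paper uses a direct $\|m\|_\infty$-estimate on $\cI(m)-\cI_M(m)$.

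One side remark: your closing assessment that ``retaining the inner radius $\delta_N/2$ in the lower limit of integration is exactly what closes the double limit'' is not quite accurate, and the paper's proof shows why. The balls $B(p,\delta/4)$ with $|x^j-x^i|\le R$ in fact satisfy the tighter inclusion $B(p,\delta/4)\subset B(x^i,R)$ (since $|p-x^i|=r_{ij}-\delta/4\le R-\delta/4$), so you could equally well have bounded the left-hand side of \eqref{packing} by $\tfrac{4^d d}{c^d}\int_0^{R}s^{d-1}K(s)\,ds$, uniformly in $N$, with no inner cutoff; this still vanishes as $R=r_M\to 0$. What is genuinely essential is that the \emph{outer} radius of the covered region shrinks with the truncation parameter, not that the inner radius is retained.
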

\begin{proof} Let $\delta$ as in \eqref{defde} be  the bound from below on the distance between $x^i, x^j$. We first deduce a preliminary estimate on $\sum_{j \neq i : x^j \in B_r(x^i)} K(|x^i-x^j|)$, for any fixed $i = 1, \ldots, N$ and $r \ge \delta$. To this aim, let $\hat x^j := x^j - \frac\delta8 \frac{x^j-x^i}{|x^j-x^i|}$. Note that
\begin{align}
& B_{ \frac\delta8}(\hat x^j) \subset B_r(x^i) \qquad \forall j \text{ such that } x^j \in B_r(x^i), \notag \\
& \text{for all $y \in B_{ \frac\delta8}(\hat x^j)$, \qquad $|y - x^i| \le |x^j-x^i|$,} \label{3properties} \\
& B_{ \frac\delta8}(\hat x^j) \cap B_{ \frac\delta8}(\hat x^k) = \emptyset \qquad \forall j \neq k. \notag
\end{align}
Indeed, since $|x^j - x^i| \ge  \delta$, by the definition of $\hat x^j$ we get
\[
|\hat x^j - x^i| \le  \Big| x^j - x^i -  \frac\delta8 \frac{x^j-x^i}{|x^j-x^i|}\Big| =  \Big| |x^j - x^i| -  \frac\delta8\Big| = |x^j - x^i| -  \frac\delta8 \le r,
\]
that yields the first claim. Then, by the triangle inequality and the previous line,
\[
|y - x^i| \le |y - \hat x^j| + |\hat x^j - x^i| \le  \frac\delta8 + |x^j - x^i| -  \frac\delta8 = |x^j - x^i|,
\]
Finally, since $x^j \in B_{ \frac\delta8}(\hat x^j)$ for all $j$,
\[
\delta \le |x^j - x^k| \le |x^j - \hat x^j| + |\hat x^j - \hat x^k| + |\hat x^k - x^k| \le |\hat x^j - \hat x^k| + \frac \delta4,
\]
hence $ |\hat x^j - \hat x^k| \ge \frac34 \delta$, and $B_{ \frac\delta8}(\hat x^j), B_{ \frac\delta8}(\hat x^k)$ must be disjoint.

Therefore, using the three properties \eqref{3properties}, and the assumption that $K(\cdot)$ is non-increasing, we get
\begin{multline}\label{Kineq}
\sum_{j \neq i : x^j \in B_r(x^i)} K(|x^i-x^j|) = \sum_{j \neq i : x^j \in B_r(x^i)} \frac{1}{|B_{ \frac\delta8}(\hat x^j)|}\int_{B_{ \frac\delta8}(\hat x^j)} K(|x^i-x^j|) dy \\  \le  \sum_{j \neq i : x^j \in B_r(x^i)} \frac{1}{|B_{ \frac\delta8}(0)|}\int_{B_{ \frac\delta8}(\hat x^j)} K(|y-x^i|) dy = \frac{1}{|B_{ \frac\delta8}(0)|} \int_{\bigcup_{j} B_{ \frac\delta8}(\hat x^j)} K(|y-x^i|) dy \\
\le \frac{1}{|B_{ \frac\delta8}(0)|} \int_{B_r(x^i)} K(|y-x^i|) dy = \frac{8^d N}{c^d  \omega_d} \int_{B_r(0)} K(|y|) dy.
\end{multline}
A first consequence of this inequality is that
\begin{multline*}
\sum_{j \neq i} K(|x^i-x^j|) = \sum_{j \neq i : x^j \in B_1(x^i)} K(|x^i-x^j|) + \sum_{j : x^j \in \R^d \setminus B_1(x^i)} K(|x^i-x^j|) \\
\le N \left( \frac{8^d}{c^d \omega_d} \int_{B_1(0)} K(|y|) dy + K(1)\right),
\end{multline*}
that provides the first stated bound on $\cI(m^N_\bx)$
\[
\cI(m^N_\bx) = \frac1{N} \sum_{  i  }\left[\frac1{N} \sum_{  j \neq i  } K(|x^i-x^j|)\right] \le  \left( \frac{8^d}{c^d \omega_d} \int_{B_1(0)} K(|y|) dy + K(1)\right).
\]
\medskip

We conclude showing the continuity of the interaction energy. For any $r > 0$, define the truncated interaction kernel
\[
K_r(t) :=
\begin{cases}
K(t) & \text{if $t \ge r$,} \\
K(r) & \text{if $0 \le t \le r$}.
\end{cases}
\]
Since $K_r(|\cdot_x - \cdot_y|) \in C(\R^{2d}) \cap L^\infty(\R^{2d})$, it is standard (see for example \cite[Proposition 7.2]{Sbook}) that
\begin{equation}\label{conv1}
 \iint_{\R^{2d}} K_r(|x-y|) m^N_\bx(dx) m^N_\bx(dy) \to \iint_{\R^{2d}} K_r(|x-y|) m(x) m(y) dxdy \qquad \text{as $N \to \infty$}.
\end{equation}
Moreover,
\begin{multline*}
\cI(m^N_\bx) = \frac1{N^2} \sum_{  i \neq j } \big(K-K_r\big)(|x^i-x^j|)  + \frac1{N^2} \sum_{  i \neq j } K_r(|x^i-x^j|) \\ =  \frac1{N} \sum_{  i  }\left[ \sum_{j \neq i : x^j \in B_r(x^i)} \big(K-K_r\big)(|x^i-x^j|) \right] +  \iint_{\R^{2d}} K_r(|x-y|) m^N_\bx(dx) m^N_\bx(dy)
\end{multline*}
Therefore, using \eqref{Kineq},
\begin{equation}\label{conv2}
\Big| \cI(m^N_\bx) -  \iint_{\R^{2d}} K_r(|x-y|) m^N_\bx(dx) m^N_\bx(dy) \Big| \le C_1 \int_{B_r(0)} K(|y|) dy
\end{equation}
for some $C_1 > 0$ not depending on $N, r$. Note also that since $\|m\|_1 = 1$,
\begin{multline}\label{conv34}
\Big| \cI(m) - \iint_{\R^{2d}} K_r(|x-y|) m(x) m(y) dxdy \Big| = \\ \iint_{|x-y| \le r} \big(K-K_r\big)(|x-y|) m(x) m(y) dxdy \le \int_{\R^d} \int_{B_r(0)} K(|z|) m(x+z) dz \, m(x) dx \\ \le \|m\|_{\infty} \int_{B_r(0)} K(|z|) dz.
\end{multline}
Hence, putting together \eqref{conv1}, \eqref{conv2} and \eqref{conv34} we obtain
\[
\limsup_{N\to \infty} \Big| \cI(m^N_\bx) -  \cI(m) \Big| \le (C_1 +  \|m\|_{\infty})\int_{B_r(0)} K(|y|) dy,
\]
and since $K(|\cdot|)$ is locally in $L^1$, we obtain the convergence assertion by letting $r \to 0$.
\end{proof}

\subsection{Periodic solutions of the continuous problem} \label{mfg} 

We recall in this section some results obtained in \cite{ccbrake} for the mean-field minimization problem over $\cK_T$ and $\cK_T^S$ (defined in \eqref{kappanuovo} and \eqref{kappat})
\begin{equation}\label{jt}
J_T(m,w) =\int_{-T/2}^{T/2}\int_{\R^d} \frac12\left|\frac{dw}{dt\otimes m(t,dx)}\right|^2 m(t,dx)dt +\int_{-T/2}^{T/2} \cW(m)dt,
\end{equation} 
 which is the continuous counterpart of \eqref{JTNi}. The potential part of $J_T$ is given by
 \begin{equation}\label{stat} m\mapsto \mathcal{W}(m):= \int_{\R^d} W(x)m(dx)-  \mathcal{I}(m), \qquad m \in  \cP_{2}^r(\R^d).
 \end{equation} 
 
In the following, we  will identify, with a slight abuse of notation, $m$ and $w$  with their  densities. 
 Note that $w\in L^2([-L,L]\times \R^d)$ for all $L>0$, due to the fact that 
\[\int_{t_1}^{t_2}\int_{\R^d} |w|^2dxdt\leq  \int_{t_1}^{t_2} \int_{\R^d} \left| \frac{w(t,x)}{m(t,x)}\right|^2 m(t,x) \, dx dt. \]
Moreover, we recall that elements of $\cK$ enjoy the   uniform continuity property \eqref{unmezzoh}. 

We recall that minimizers of \eqref{stat} are stationary minimizers (equilibria) of
the energy $J_T$  over $\cK_T$, but we are interested in particular in constructing non-trivial evolutive critical points. These will be found as minimizers of $J_T$ over $\cK_T^S$, which are time periodic and oscillate between stationary solutions. Moreover, sending the period to infinity, it is possible to obtain equilibria which are defined for all times and connect the stationary points.  Existence of such families of minimizers have been proved  in \cite[Theorem 1.1, Theorem 1.2, Theorem 1.3]{ccbrake}. We collect here the main results. Note first that symmetry constraints in $\cK^S_T, \cK^S$ are natural, in a sense which will be elaborated in Remark \ref{remsimm}. Moreover, note  also that to guarantee that minima of $\cW$ are made up of two disjoint subsets $\mathcal{M}^\pm$ of $\cP_{2}^r(\R^d)$, we will ask $W$ to have two sufficiently large plateaus (see the following assumption \eqref{assw1}). Such an assumption is not really needed to prove other results for the $N$-agents system.
\begin{theorem}\label{mfgex} Assume  \eqref{assK}, \eqref{pos}, \eqref{assw0},  \eqref{ref}, \eqref{coew2}, and in addition that
\begin{equation}\label{assw1}
\begin{cases}
 \exists a^+, a^-\in \R^d, r_0 > 0 \text{ such that  }B(a^+, r_0) \cap B(a^-, r_0) = \emptyset, \quad |B(a^{\pm},r_0)| \ge 1 \\
  \qquad { and } \quad W(x)=0 \Leftrightarrow x\in B(a^+, r_0)\cup B(a^-, r_0).\end{cases}
\end{equation}
\begin{enumerate}
\item  There holds  $\min_{ \cP_{2}^r(\R^d) } \cW=-\mathcal{I}(\chi_{B_{r}})$ for $r=(\omega_d)^{-1/d}$,  and   all the minimizers of \eqref{stat} are given by $\mathcal{M}^+\cup\mathcal{M}^-$, where $\mathcal{M}^-=\gamma_\#\mathcal{M}^+$ and 
\[\mathcal{M}^+=\{\chi_{E},  \text{ where } E= B(x', (\omega_d)^{-1/d})\subseteq B(a^+,r_0) \text{ for some $x'\in\R^d$}\}. \]
If $|B(a^{\pm},r_0)|=1 $, then $\mathcal{M}^\pm=\{\chi_{B(a^\pm, r)}\}$. 
 $\mathcal{M}^+$ and $ \mathcal{M}^-$ are compact subsets of $\mathcal{P}_2(\R^d)$, and $d_2(\mathcal{M^+}, \mathcal{M^-})>0$. We define $q_0:= d_2(\mathcal{M^+}, \mathcal{M^-})/2$. 

\item Let $q \in (0,  q_0)$, where $q_0$ is defined in the previous item. Then  there exists $\bar T=\bar T(q) > 4$   such that, for any $T \ge \bar T$, there exists a $T$-periodic minimizer $(m^T, w^T)\in\cK_T^{S} $ of  the functional $J_T$ in $\cK_T^S$, which satisfies
\[
\begin{cases}
d_2(m^T(t), \mathcal{M}^+) < q & \forall t \in \left(s, \frac T2- s\right) \\
d_2(m^T(t),\mathcal{M}^-) < q & \forall t \in \left(-\frac T2 + s,  - s\right),
\end{cases}
\]for some $0<s  < C = C(q)$ (not depending  on $T$). 
Moreover $\bar T(q)\to +\infty$ as $q\to 0$ and %\item For every $(m,w)\in \cK$, there holds  $\lim_{t\to \pm \infty} d_2(m, \mathcal{M}^+\cup\mathcal{M}^-)=0$.
%\item There exists $(m,w)\in \cK^{S}$ such that $J_\infty(m,w)=\min_{\cK^{S}} J_\infty$.  
%
%\item  For any $T > 0$, let $(m^T, w^T)\in\mathcal{K}_T^{S}$ be a minimizer of $J_T$ as in item (ii). 
%Then
\[\lim_{T\to +\infty}d_2^2\left( m^T\left( \frac{T}{4}\right), \mathcal{M}^+\right)=0=\lim_{T\to +\infty}d_2^2\left( m^T\left( -\frac{T}{4}\right),\mathcal{M}^-\right). \] 
%and up to passing  to subsequences $T_n\to +\infty$, there holds 
%\[m^{T_n}(t)\to m(t)\in C(\R, \cP_2(\R^d))\qquad \text{locally uniformly in $C(\R, \cP_p(\R^d))$ for all $p<2$}\] and
%$w^{T_n}\to w$ weakly in $L^2([-L,L]\times \R^d)$ for all $L>0$, where  
% $(m,w)\in \cK_s$  minimizes $J_\infty$. Finally, $J_\infty(m,w)=\frac{1}{2}\lim_{T\to +\infty} J_T(m^T, w^T)$.
  \end{enumerate} 
\end{theorem}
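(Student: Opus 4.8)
\emph{Item (i).} I would prove the structural claim by a symmetrization (rearrangement) principle for the interaction energy. Since $W\ge0$ and, by \eqref{assw1}, $\{W=0\}=B(a^+,r_0)\cup B(a^-,r_0)$ with $|B(a^{\pm},r_0)|\ge1$, any $m=\chi_{B(x',r)}$ with $r=\omega_d^{-1/d}$ and $B(x',r)\subseteq B(a^+,r_0)$ belongs to $\cP_2^r(\R^d)$ and satisfies $\cW(m)=-\cI(\chi_{B_r})$, so $\min_{\cP_2^r(\R^d)}\cW\le-\cI(\chi_{B_r})$. The core is the estimate
\[
\cI(m)\le\cI(\chi_{B_r})\quad\text{for every }m\in\cP_2^r(\R^d),\qquad\text{with equality iff }m=\chi_{B(x',r)}\text{ for some }x'\in\R^d .
\]
I would get it by first noting that $\cI$ is a strictly convex functional (by \eqref{pos}) on the convex set $\{f\in L^1(\R^d):0\le f\le1,\ \int f=1\}$, so its supremum is attained only at extreme points — i.e. at indicators of unit-volume sets — attainment following from a standard concentration--compactness argument together with the uniform bound $\cI(m)\le C_K$ of Proposition \ref{proplimite}; and then by the Riesz rearrangement inequality (its equality case applying since the potential $V_{\chi_{B_r}}=\chi_{B_r}*K$ is radially strictly decreasing), which singles out the unit ball as the unique maximizer up to translations. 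Granting this, $\cW(m)\ge\int_{\R^d}W\,dm-\cI(\chi_{B_r})\ge-\cI(\chi_{B_r})$, and equality forces both $\int W\,dm=0$ (so $\text{supp}\,m\subseteq\{W=0\}$) and $m=\chi_{B(x',r)}$; as a ball is connected, $B(x',r)$ lies in one component of $\{W=0\}$, so $m\in\mathcal{M}^+\cup\mathcal{M}^-$. Finally $\gamma$ must exchange $B(a^+,r_0)$ and $B(a^-,r_0)$ (were it to fix each, one would get $\mathcal{M}^-=\mathcal{M}^+$, against disjointness of supports), whence $\mathcal{M}^-=\gamma_\#\mathcal{M}^+$; $\mathcal{M}^+$ is compact in $\cP_2(\R^d)$, being the image of the compact set $\{x':B(x',r)\subseteq B(a^+,r_0)\}$ under the $d_2$-continuous map $x'\mapsto\chi_{B(x',r)}$; and $d_2(\mathcal{M}^+,\mathcal{M}^-)>0$ since these compacta are supported in disjoint balls.

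\emph{Item (ii): existence, non-triviality, localization.} By \eqref{assw0} and Item (i), $\cW\ge\min\cW>-\infty$, $\cW$ is coercive (its $d_2$-sublevel sets have bounded second moments) and lower semicontinuous for $d_2$ (Lemma \ref{lscW} for the $W$-part, the truncation argument of Proposition \ref{proplimite} for $\cI$); with the uniform continuity estimate \eqref{unmezzoh} and the lower semicontinuity of the kinetic term (Lemma \ref{lemmalsc}), the direct method yields a minimizer $(m^T,w^T)\in\cK_T^S$ of $J_T$. It is non-stationary for $T$ large: a constant competitor in $\cK_T^S$ must satisfy $\bar m=\gamma_\#\bar m$, and since no element of $\mathcal{M}^+\cup\mathcal{M}^-$ is $\gamma$-invariant while $\inf\{\cW(\bar m):\bar m=\gamma_\#\bar m\}$ is attained (coercivity), this infimum equals $\min\cW+\theta$ with $\theta>0$, so any constant competitor costs at least $T(\min\cW+\theta)$; conversely, transporting a fixed $\gamma$-invariant density onto a translate of $\chi_{B_r}$ inside $B(a^+,r_0)$ in unit time, reflecting by $\gamma$, and resting afterwards, produces a competitor in $\cK_T^S$ with $J_T\le T\min\cW+C_1$, $C_1$ independent of $T$. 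Thus for $T>C_1/\theta$ the minimizer is non-stationary and satisfies $\int_{-T/2}^{T/2}(\cW(m^T(t))-\min\cW)\,dt\le C_1$ and $\int_{-T/2}^{T/2}\int_{\R^d}\left|\frac{dw^T}{dt\otimes m^T(t,dx)}\right|^2 m^T(t,dx)\,dt\le2C_1$. The localization statement then follows by combining these bounds with a coercivity of $\cW$ near its minimizers — that $\cW(m)-\min\cW$ dominates a positive increasing function of $d_2(m,\mathcal{M}^+\cup\mathcal{M}^-)$ on a fixed neighbourhood of the minimizers, a quantitative (stability) form of the estimate in Item (i): the first bound makes $\big|\{t:d_2(m^T(t),\mathcal{M}^+\cup\mathcal{M}^-)\ge\eps\}\big|$ uniformly bounded in $T$, the second bound and \eqref{unmezzoh} give the equicontinuity $d_2(m^T(t),m^T(s))^2\le2C_1|t-s|$ which permits only a bounded number of passages between neighbourhoods of $\mathcal{M}^+$ and $\mathcal{M}^-$ (recall $d_2(\mathcal{M}^+,\mathcal{M}^-)=2q_0>2q$), and the constraint $m^T(0)=\gamma_\#m^T(0)$ forces a transition near $t=0$; the symmetry about $T/4$ then yields the stated inclusions on $(s,T/2-s)$ and $(-T/2+s,-s)$ with $s=s(q)$ independent of $T$, while $\bar T(q)\to+\infty$ as $q\to0$ reflects that reaching a $q$-neighbourhood of $\mathcal{M}^+$ along an optimal transition takes a time diverging as $q\to0$.

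\emph{The limit $T\to\infty$.} Suppose $d_2(m^{T_n}(T_n/4),\mathcal{M}^+)\ge\delta>0$ along some $T_n\to\infty$. The curves $\widehat m^n(t):=m^{T_n}(T_n/4+t)$ are even in $t$ and, on every fixed interval, have uniformly bounded second moments, kinetic energy and, by \eqref{unmezzoh}, $d_2$-modulus of continuity; a compactness (Ascoli) and diagonal argument extracts a subsequence converging locally uniformly in $d_2$ to an even curve $\widehat m:\R\to\cP_2^r(\R^d)$ with $\int_\R(\cW(\widehat m(t))-\min\cW)\,dt<\infty$ (lower semicontinuity), so that $d_2(\widehat m(t),\mathcal{M}^+)\to0$ as $t\to\pm\infty$ while $d_2(\widehat m(0),\mathcal{M}^+)\ge\delta$; by the coercivity of $\cW$ near its minimizers, $a:=\int_0^\infty(\cW(\widehat m(t))-\min\cW)\,dt>0$. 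Then, for $L$ fixed large and $n$ large, modify $m^{T_n}$ on $[T_n/4-L,T_n/4]$ by joining $\widehat m^n(-L)$ — which lies within $o_L(1)$ of $\mathcal{M}^+$ — to its nearest point $\bar m_n\in\mathcal{M}^+$ over $[T_n/4-L,T_n/4-L+1]$ and keeping $\bar m_n$ at rest on $[T_n/4-L+1,T_n/4]$, then extend by the two symmetries of $\cK_{T_n}^S$. The competitor's total energy minus $m^{T_n}$'s equals $4$ times the difference of their costs on $[T_n/4-L,T_n/4]$, which is at most $o_L(1)-(a-o_L(1))<0$ once $L$ is large — contradicting minimality. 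Hence $d_2(m^T(T/4),\mathcal{M}^+)\to0$, and the companion limit at $-T/4$ follows from $m^T(-t)=\gamma_\#m^T(t)$.

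\emph{Main obstacles.} The delicate ingredients are (a) the quantitative coercivity of $\cW$ near $\mathcal{M}^\pm$, which underlies every localization estimate and requires a stability version of the rearrangement/bathtub inequality of Item (i); and (b) carrying out the competitor constructions above while respecting the density constraint $m\le1$, since $d_2$-geodesics need not preserve it and one must instead interpolate through curves staying in $\cP_2^r(\R^d)$ — here one can exploit that translates of $\chi_{B_r}$ have density exactly $1$, first rearranging a measure close to $\mathcal{M}^+$ into such a ball. Everything else is a recollection of \cite{ccbrake}.
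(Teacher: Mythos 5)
The paper does not prove Theorem~\ref{mfgex} itself: immediately above the statement it says ``Existence of such families of minimizers have been proved in \cite[Theorem 1.1, Theorem 1.2, Theorem 1.3]{ccbrake}. We collect here the main results.'' So there is no in-paper proof to compare against, and your sketch should be read as a reconstruction of the argument in the cited reference. As such it is architecturally sound and consistent with what is described in Section~\ref{mfg}: bathtub/rearrangement classification of static minimizers, direct method plus a sub-additive competitor construction to rule out constant minimizers for $T$ large, a quantitative coercivity of $\cW$ near $\mathcal{M}^\pm$ combined with the $1/2$-H\"older estimate \eqref{unmezzoh} and the $\gamma$-symmetry pivot at $t=0$ for the localization, and a concentration--contradiction argument for the $T\to\infty$ asymptotics. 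You also flag the two genuinely delicate points (the stability form of the rearrangement inequality, and building competitors that respect the constraint $m\le 1$) instead of treating them as routine, which is accurate.

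Two small cautions. First, in item (i) your route to ``$\sup\cI$ is attained and at an extreme point'' leans on concentration--compactness for attainment, which translation invariance makes non-trivial; a more self-contained route is Riesz applied to $m^*$, followed by a bathtub-type comparison between $m^*$ and $\chi_{B_r}$ among radial nonincreasing profiles with $0\le m^*\le 1$, $\int m^*=1$, which avoids needing attainment at the outset. Second, your parenthetical argument that $\gamma$ must exchange $B(a^+,r_0)$ and $B(a^-,r_0)$ is not quite a deduction: if $\gamma$ fixed each ball you would get $\gamma_\#\mathcal{M}^+=\mathcal{M}^+$ (not $\mathcal{M}^+=\mathcal{M}^-$), so what actually fails is the claim $\mathcal{M}^-=\gamma_\#\mathcal{M}^+$ in the theorem, not disjointness of supports. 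The exchange property is really an implicit part of the standing assumptions (in the concrete $d=1$ setting it is $\gamma(x)=-x$ with $a^-=-a^+$), and it is cleaner to state it than to argue for it post hoc.
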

We end up with the following useful observation. 
\begin{remark}\label{remsimm} \upshape By the convexity of the function $(m,w)\mapsto \frac{|w|^2}{m}$ and the regularity of the potential functional $\mathcal{W}$  it is possible to show that if $(\bar m, \bar w)\in \cK_T^S$ is a minimizer  of $J_T$ in $\cK_T^{S}$, then for all $(m, w) \in \cK_T^{S}$
\begin{multline}\label{nasheq}
\int_{-T/2}^{T/2} \int_{\R^d}  \left| \frac{\bar w(t,x)}{\bar m(t,x)}\right|^2 \bar m(t,x) + (W(x)-V_{ \bar m}(x))\bar m(t,x) dx dt  \\ \leq \int_{-T/2}^{T/2} \int_{\R^d}  \left| \frac{w(t,x)}{m(t,x)}\right|^2 m(t,x) +  (W(x)-V_{ \bar m}(x)) m(t,x) dx dt \end{multline} 
where $V_{\bar m}$ is defined in \eqref{pot}. The argument is detailed in  \cite{BC16}, see also \cite{notebari} and is based on the idea of computing the functional $J_T$ on $(\lambda \bar m+(1-\lambda) m), \lambda \bar w+(1-\lambda)w)$, use convexity of the kinetic part and regularity of the potential part and then send $\lambda\to 0$. Such a minimality of $(\bar m, \bar w)$ can be regarded as a mean field Nash equilibrium property.

Moreover, it is possible to show, using the symmetry assumption \eqref{ref}, that actually   the minimization property \eqref{nasheq}  holds for {\it all} competitors  $(m, w) \in \cK_T$. For the details we refer to \cite[Remark 3.4]{ccbrake}.  Therefore, following  \cite{cas16}, the fact that $(\bar m, \bar w)$ satisfies  \eqref{nasheq} for all $(m, w) \in \cK_T$ could   be used as a starting point to derive optimality conditions, where an additional ``pressure'' terms and an ergodic constant will appear   due to density constraints and $T$-periodicity, whereas  no further multipliers related to $m(T/4 + t) = m(T/4  -t)$, $m(-t) = \gamma_\# m(t)$ appear in view of the symmetry assumption \eqref{ref}.
\end{remark}

 \section{Periodic solutions of the $N$-agents system in dimension $1$}\label{sectionproperties}  
 From now on, we will restrict our study to dimension $d=1$. We will analyze some qualitative properties of the optimal trajectories of the $N$-agents system. That is, we will consider periodic minimizers, with and without symmetries, of $J_T^N$.
 
%In this section we introduce the discrete counterpart $J_T^N$ of the energy $J_T$, on a evolving  system of $N$-particles. Assuming  more precise coercivity conditions on $W$ and $K$, that is \eqref{coew} and \eqref{coew2}, and also that the state space is $\R$, so that the dimension $d=1$, 

We first recall the form of $J_T^N$ (which is the discrete counterpart of $J_T$):
\begin{equation}\label{jtn}
J_T^N(\bx) = \frac1N \sum_{i=1}^N \int_{-T/2}^{T/2} \frac{|\dot{x}^i_t|^2 }{2} dt +  \frac1N \sum_{i=1}^N \int_{-T/2}^{T/2} W(x^i_t) dt - \frac1{N^2}\sum_{i \neq j} \int_{-T/2}^{T/2} K\big(|x^i_t-x^j_t|\big) dt,
\end{equation}
where $\bx \in W^{1,2} (\R; \R^N)$ and is $T$-periodic. Note that $J_T^N(\bx) < +\infty$ only if  $x^i_t \neq x^j_t$ for all $i \neq j$ and a.e. $t$. 
Recalling \eqref{form1},  the functional $J_T^N(\bx)$ can be equivalently written as (see also \eqref{equivI}),
\begin{multline*}
J_T^N(\bx) =  J_T(m^N_\bx,w^N_\bx) = \int_{-T/2}^{T/2}\int_{\R} \frac12 \left|\frac{dw^N_\bx(t)}{dm_{\bx}^N(t)}\right|^2 m_{\bx}^N(t)(dx)dt+  \int_{-T/2}^{T/2} \int_{\R} W(x)m^N_\bx(t)(dx) dt \\ - \int_{-T/2}^{T/2}\int_{\R^{2} \setminus \Delta} K\big(|x-y|\big) m^N_\bx(t)(dx) m^N_\bx(t)(dy) dt.\end{multline*} 
This functional will be considered over $\cK^N_T$ and $\cK^{N,S}_T$ (defined in \eqref{kappaN}). Since we have set our problem on the real line, it is convenient to fix the reflection as follows: $\gamma(x)=-x$ for all $x \in \R$.

First of all we have  a straightforward result about existence of minimizers in $\cK_T^N$ and $\cK_T^{N,S}$.

\begin{proof}[Proof of Theorem \ref{thmNparticle}, (i) and (ii)] The proof is based on the classical direct method, so we briefly sketch its argument. 
 First of all we observe that for every $\bx\in \cK_T^N$, by  assumption \eqref{assw0}  and by Proposition \ref{proplimite}, there exists a constant $C_K$ depending on the interaction kernel $K$ and on the dimension $d$, but not on $N$ and $\bx$ such that 
\[ J_T^N(\bx)  \geq    \frac1N \sum_{i=1}^N \int_{-T/2}^{T/2}  \frac{|\dot{x}^i_t|^2}{2} dt - C_WT +  \frac{C_W^{-1}}{N} \sum_{i=1}^N \int_{-T/2}^{T/2}|x^i_t|^2 dt-C_KT\geq -C_WT-C_KT\]
where $C_W$ is as in \eqref{assw0}.   

We prove just  item (ii), since the proof of item (i) is completely analogous (and easier). 
 Let $\eta=\inf_{\cK_T^{N,S}} J_T^N$. We fix a minimizing sequence $\bx_n$.  Since $J_T^N(\bx_n)\leq \eta+1$, we get by the above inequality   that  $\frac1{2N} \sum_{i=1}^N \int_{-T/2}^{T/2} |(\dot{x}_n^i)_t|^2 dt  \leq \eta+1+C_KT+C_WT$.   Again, by the same inequality as above we get  
  $ \frac{1}{N} \sum_{i=1}^N \int_{-T/2}^{T/2}|(x_n^i)_t|^2 dt \leq C_W(\eta+1+C_WT+C_KT)$.  Therefore we end up with  $\|\bx_n\|_{W^{1,2}_{\text{per}}(\R, \R^N)} \leq C$, where $C$ is independent of $n$.  Therefore by Sobolev embedding $\bx_n$ are equi-H\"older continuous, and $\bx_n\to \bx$  (up to a subsequence) weakly in $W^{1,2}_{\text{per}}(\R, \R^N)$ and also uniformly. We conclude by uniform convergence that $\bx\in \cK_T^{N,S}$. Moreover $\bx$ is a minimizer by  weak lower semicontinuity of $\frac{1}{2N} \sum_{i=1}^N \int_{-T/2}^{T/2}|(\dot{x}_n^i)_t|^2 dt$ and  by continuity of $ \frac1N \sum_{i=1}^N \int_{-T/2}^{T/2} W(x^i_t) dt - \frac1{N^2}\sum_{i \neq j} \int_{-T/2}^{T/2} K\big(|x^i_t-x^j_t|\big) dt$.

\end{proof}

 \subsection{Compact support}
We now show  that periodic minimal  solutions of the $N$-agents system  have a compact support,  which is independent of the number of agents 

\begin{proof}[Proof of Theorem \ref{thmNparticle}, (iii)] We prove the result only for an $\bx\in \cK_T^{N,S} $ which minimizes $J_T^N$ in $\cK_T^{N,S}$, since the other case is completely analogous (and even easier). It is enough to prove that $\min_{t \in [0,T]} x^1_{t} \ge -R_0 - 1$: this would indeed yield for all $i, t$,
\[ 
-R_0 - 1 \le \min_{t \in [0,T]} x^1_{t} \le x^1_{t} < x^i_{t} < x^N_{t} = -x^1_{-t} \le -\min_{t \in [0,T]} x^1_{t} \le R_0 + 1.
\] 
Suppose by contradiction that $x^1_{\hat t} < -R_0 - 1$ for some $\hat t \in [0, T]$. Let $\bar x_t^i$ be defined as follows (see Figure \ref{trunc}):

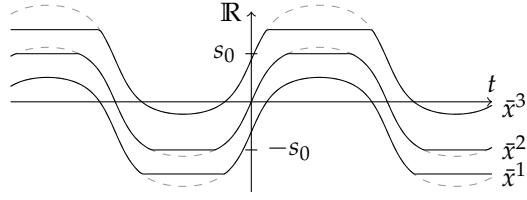
\begin{figure}
\centering
\begin{tikzpicture}[scale=0.80]
      \draw[->] (-4,0) -- (4,0) node[above] {$t$};
      \draw[->] (0,-1.5) -- (0,1.5) node[left] {$\R$};
      
    \draw[scale=1,domain=-4:4,smooth,samples=50,variable=\x, dashed, opacity=.4] plot ({\x},{atan(2*sin(500* \x /6.28))/70 - .5}) ;
        \draw[scale=1,domain=-4:4,smooth,samples=50,variable=\x, dashed, opacity=.4] plot ({\x},{atan(2*sin(500* \x /6.28))/70 }) ;
    \draw[scale=1,domain=-4:4,smooth,samples=50,variable=\x, dashed, opacity=.4, ] plot ({\x},{atan(2*sin(500* \x /6.28))/70 + .7}) ;

\draw[scale=1,domain=-4:4,smooth,samples=150,variable=\x] plot ({\x},{max(atan(2*sin(500* \x /6.28))/70 - .5,-1.2)}) node[right] {$\bar x^1$};
        \draw[scale=1,domain=-4:4,smooth,samples=150,variable=\x] plot ({\x},{max(-.8, min(atan(2*sin(500* \x /6.28))/70, .8)) }) node[right] {$\bar x^2$};
    \draw[scale=1,domain=-4:4,smooth,samples=150,variable=\x] plot ({\x},{min(atan(2*sin(500* \x /6.28))/70 + .7,1.2)}) node[right] {$\bar x^3$};
    
        \draw[-] (.1,.8) -- (-.1,.8) node[left] {$s_0$};
    \draw[-] (-.1,-.8) -- (.1,-.8) node[right] {$-s_0$};
    
 \end{tikzpicture} 
 \caption{\footnotesize An illustration of the truncation procedure in the proof of Theorem \ref{thmNparticle} (iii): $\bar x^i$ are defined in \eqref{barxdef}. Dashed lines are the untruncated $x^i$.}\label{trunc}
\end{figure}

\begin{equation}\label{barxdef}
\bar x_t^i :=
\begin{cases}
R_0 + \frac{i}{N} & \text{if $x_t^i > R_0 + \frac{i}{N}$} \\
x_t^i & \text{if $-R_0 - \frac{N+1-i}{N} \le x_t^i \le R_0 + \frac{i}{N}$}\\
-R_0 - \frac{N+1-i}{N} & \text{if $x_t^i < -R_0 - \frac{N+1-i}{N}$}.
\end{cases}
\end{equation}
We first show that $\bar \bx$ belongs to $\cK_T^{S,N}$. Since  $-R_0 - \frac{N+1-i}{N} \le \bar x_t^{i} \le R_0 + \frac{i}{N}$,  there hold  
\[\begin{cases} \bar x_t^i \le x_t^i& \text{ if }x_t^i \ge  -R_0 - \frac{N+1-i}{N}\\ \bar x_t^i \ge x_t^i &\text{ if }x_t^i \le R_0 +\frac{i}{N}.\end{cases}\] By definition, we get 
\[
\bar x_t^{i+1} = 
\begin{cases}
R_0 + \frac{i+1}{N}  ( \ge \bar x_t^{i} + \frac1{N} )& \text{if $x_t^{i+1} > R_0 + \frac{i+1}{N}$}, \\
x_t^{i+1} (\ge x_t^{i} + \frac1{N}  \geq  \bar x_t^{i} + \frac1{N}) & \begin{array}{l}  \text{if } -R_0 - \frac{N-i}{N} \le x_t^{i+1} \le R_0 + \frac{i+1}{N} \\ \quad\text{and }  x_t^i \ge -R_0 - \frac{N+1-i}{N}, \end{array} \\
x_t^{i+1}( \ge -R_0 - \frac{N -i}{N}   = \bar x_t^{i} + \frac1{N}) & \begin{array}{l} \text{if }  -R_0 - \frac{N-i}{N} \le x_t^{i+1} \le R_0 + \frac{i+1}{N}  \\ \quad\text{and } x_t^i < -R_0 - \frac{N+1-i}{N}, \end{array} \\
-R_0 - \frac{N-i}{N} ( = \bar x_t^{i} + \frac1{N} )& \text{if $x_t^{i+1} < -R_0 - \frac{N-i}{N}$ \ (so $x_t^i < -R_0 - \frac{N+1-i}{N}$)},
\end{cases}
\]
and so we conclude that  $\bar x_t^{i+1} \ge \bar x_t^{i} + \frac1{N}$. 

Moreover, since $\bar x_t^{i} = \max\{\min\{x_t^i, R_0 + \frac{i}{N}\}, -R_0 - \frac{N+1-i}{N}\}$,
\begin{multline*}
\bar x_t^{N+1-i} = \max\left\{\min\left\{x_t^{N+1-i}, R_0 + \frac{N+1-i}{N}\right\}, -R_0 - \frac{i}{N}\right\} = \\-
\min\left\{\max\left\{x_{-t}^{i}, - R_0 - \frac{N+1-i}{N}\right\}, R_0 + \frac{i}{N}\right\} = - \bar x_{-t}^{i},
\end{multline*}
and $\bar x^i_{\frac T4 + t} = \bar x^i_{\frac T4 - t}$ follows analogously.

We can now evaluate $J_T^N(\bar \bx)$. Recalling that $W'(s) < 0$ for all $s < -R_0$ and $W'(s) > 0$ for all $s > R_0$ by symmetry assumptions on $W$ we get that \[
W(\bar x_t^i) =
\begin{cases}
W(R_0 + \frac{i}{N}) < W(x_t^i) & \text{if $x_t^i > R_0 + \frac{i}{N}$} \\
W(x_t^i) & \text{if $-R_0 - \frac{N+1-i}{N} \le x_t^i \le R_0 + \frac{i}{N}$}\\
W(-R_0 - \frac{N+1-i}{N}) < W(x_t^i) & \text{if $x_t^i < -R_0 - \frac{N+1-i}{N}$}.
\end{cases}
\]
Since we are supposing that $x^1_{\hat t}< -R_0 - 1 $, by continuity of $t \mapsto x^1_{\hat t}$ we have $x^1_{t}< -R_0 - 1$ for all $t$ in a neighborhood of $\hat t$, and therefore
\begin{equation}\label{decr1}
 \frac1N \sum_{i=1}^N \int_{-T/2}^{T/2} W(\bar x^i_t) dt <  \frac1N \sum_{i=1}^N \int_{-T/2}^{T/2}W(x^i_t) dt .
\end{equation}
In addition, by the definition \eqref{barxdef} of $\bar \bx$ it is clear that
\begin{equation}\label{decr2}
\frac1N \sum_{i=1}^N \int_{-T/2}^{T/2}|\dot{\bar x}^i_t|^2 dt \le \frac1N \sum_{i=1}^N \int_{-T/2}^{T/2} |\dot{x}^i_t|^2 dt.
\end{equation}
Finally, to evaluate the interaction energy, first of all we observe  that reciprocal distances between agents do not increase in this truncation procedure: indeed 
\[ 
\bar x_t^{i+1} - \bar x_t^i =
\begin{cases}
R_0 + \frac{i+1}{N} - R_0 -\frac{i}{N}  = \frac{1}{N} & \text{if $x_t^i > R_0 + \frac{i}{N}$ \ (so $x_t^{i+1} > R_0 + \frac{i+1}{N}$)}, \\
R_0 + \frac{i+1}{N}  - x_t^i \le x_t^{i+1}  - x_t^i  & \begin{array}{l}  \text{if $-R_0 - \frac{N+1-i}{N} \le x_t^i \le R_0 + \frac{i}{N}$}, \\ \quad\text{and }   x_t^{i+1}> R_0 +\frac{i+1}{N}, \end{array}  \\
 x_t^{i+1}  - x_t^i  & \begin{array}{l}  \text{if $-R_0 - \frac{N+1-i}{N} \le x_t^i \le R_0 + \frac{i}{N}$}, \\ \quad\text{and }   x_t^{i+1}\leq R_0 +\frac{i+1}{N}, \end{array}  \\
 x_t^{i+1} +R_0 + \frac{N+1-i}{N} < x_t^{i+1}  - x_t^i &\begin{array}{l}  \text{if } x_t^i < -R_0 - \frac{N+1-i}{N} \\ \quad\text{and }  x_t^{i+1} \ge -R_0 - \frac{N-i}{N}, \end{array} \\
-R_0 - \frac{N-i}{N} +R_0 + \frac{N+1-i}{N} = \frac{1}{N} &\begin{array}{l}  \text{if } x_t^i < -R_0 - \frac{N+1-i}{N} \\ \quad\text{and }  x_t^{i+1} < -R_0 - \frac{N-i}{N}, \end{array} 
\end{cases}
\]
so $\bar x_t^{i+1} - \bar x_t^i \le  x_t^{i+1}  - x_t^i $, that implies $|\bar x_t^{j} - \bar x_t^i| \le  |x_t^{j}  - x_t^i|$ for all $i, j, t$.
Therefore, monotonicity of $K(\cdot)$   yields  $K(|\bar x_t^{i}  - \bar x_t^j|) \ge K(|x_t^{i} - x_t^j|)$ for all $i,j,t$, and then \begin{equation}\label{decr3}
-\frac1{N^2}\sum_{i \neq j}  \int_{-T/2}^{T/2} K\big(|\bar x^i_t- \bar x^j_t|\big) dt \le - \frac1{N^2}\sum_{i \neq j}  \int_{-T/2}^{T/2}K\big(|x^i_t-x^j_t|\big) dt.
\end{equation}
Hence, adding \eqref{decr1}, \eqref{decr2} and \eqref{decr3} provides
\[
J_T^N(\bar \bx) < J_T^N(\bx),
\]
which contradicts the minimality of  $\bx$.
\end{proof}

 \subsection{Optimality conditions} \label{soptcond}
In this section we derive optimality conditions for the minimizers of the functional $J_T^N$, in the sets $\cK_T^N$ and $\cK_T^{N,S}$. These will be crucial in the proof of Theorem \ref{propositionsaturation}.
Due to the distance constraints,  the optimality conditions   are given  in terms of inequalities, and hold as equalities on intervals where the constraints is not saturated. 
First of all we start observing that,   as shown in the continuous case in Remark \ref{remreg}, in the optimality conditions no further  multipliers  due to the symmetry conditions appear. 
 
\begin{remark}\upshape \label{remregdiscrete}  Let $\bf x$ be a minimizer of $J_T^N$ on $\cK_T^{N,S}$. First of all we observe that $\bf x$ minimizes on $\cK_T^{N,S}$ the following linearized functional:
\begin{multline}\label{linj}
\overline J_T^N({\bf y};{\bf x}) := \frac1N \sum_{i=1}^N \int_{-T/2}^{T/2} \dot{x}^i_t \dot{y}^i_t  dt +  \frac1N \sum_{i=1}^N \int_{-T/2}^{T/2}  W'(x^i_t) y^i_t dt \\ - \frac1{N^2}\sum_{i \neq j} \int_{-T/2}^{T/2} K'\big(|x^i_t-x^j_t|\big)\frac{x^i_t-x^j_t}{|x^i_t-x^j_t|}(y^i_t-y^j_t) dt .
\end{multline}
To prove this, we fix ${\bf y} \in \cK_T^{N,S}$ and we denote by ${\bf y}^\lambda := (1-\lambda) {\bf x} + \lambda {\bf y}$ for all $\lambda\in [0,1]$. Since ${\bf y}^\lambda \in \cK_T^{N,S}$ there holds \begin{align*} 0\leq & \frac{J_T^N({\bf y}^\lambda) - J_T^N({\bf x})}{\lambda}=\frac1N \sum_{i=1}^N \int_{-T/2}^{T/2} \dot{x}^i_t   (\dot{y}^i_t - \dot{x}^i_t)dt +\frac{\lambda}{2N}\sum_{i=1}^N \int_{-T/2}^{T/2} |\dot{y}^i_t|^2-|\dot{x}^i_t |^2 dt \\ &+   \frac1N \sum_{i=1}^N \int_{-T/2}^{T/2} \frac{ W(x^i_t+\lambda(y^i_t-x^i_t))-W(x^i_t)}{\lambda} dt\\
& - \frac1{N^2}\sum_{i \neq j} \int_{-T/2}^{T/2} \frac{K\big(|x^i_t-x^j_t+\lambda (y^i_t-y^j_t-x^i_t+x^j_t) |\big)-K\big(|x^i_t-x^j_t  |\big) }{\lambda} dt  \end{align*} from which we conclude sending $\lambda \to 0$.

In  addition we get that ${\bf x}$ is a minimizer of $\overline J_T^N$ in a more general class of non-symmetric competitors:
\begin{equation}\label{optlin}
\overline J_T^N({\bf z};{\bf x}) \ge \overline J_T^N({\bf x}; {\bf x}) \qquad \text{for all ${\bf z} \in \cK_T^{N}$}.
\end{equation}
Indeed, let  ${\bf z} \in \cK_T^{N}$,  such that $z^i_{\frac T4 + t} = z^i_{\frac T4 - t}$. Define 
\[
y^i_t := \frac{z^i_t-z^{N+1-i}_{-t}}2, \qquad w^i_t := \frac{z^i_t +z^{N+1-i}_{-t}}2\qquad z^i_t = y^i_t + w^i_t. 
\]
It is straightforward to check that $y^i_t -y^{i-1}_t=\frac{z^i_t-z^{i-1}_{t}}2+\frac{z^{N+2-i}_{-t}-z^{N+1-i}_{-t}}2\geq \frac{1}{N}$, and $y^i_t=-y_{-t}^{N+1-i}$, so  ${\bf y} \in \cK_T^{N,S}$.

Moreover $w^{N+1-i}_t = w^i_{-t}$.  By this, and recalling 
$x^{N+1-i}_t =- x^i_{-t}$    and that $W'(s)=-W'(-s)$ by \eqref{ref}, we obtain    $\overline  J_T^N({\bf w};{\bf x})=0$. 
So,  by linearity, we conclude \[
\overline  J_T^N({\bf z}; {\bf x}) = \overline  J_T^N({\bf y}; {\bf x}) + \overline  J_T^N({\bf w};{\bf x}) = \overline  J_T^N({\bf y};{\bf x}) \ge \overline J_T^N({\bf x};{\bf x}) .
\]

To prove that the previous inequality holds without the assumption $z^i_{\frac T4 + t} = z^i_{\frac T4 - t}$, one can argue in a similar way, using the decomposition $z^i_t = \frac12(z^i_{t} + z^i_{\frac T 2 -t}) + \frac12(z^i_{ t} - z^i_{\frac T 2 -t })$.
\end{remark}

For $J=1, \ldots, N-1$, let us consider the mean of $(x_t^{J+1}, \ldots, x_t^N)$ and the mean of $(x_t^1, \ldots, x_t^{J})$, i.e.
\begin{equation}\label{bdef}
\ox_t^{J+1}:= \frac1{N-J} \sum_{i=J+1}^N x^i_t, \qquad \ux^J_t := \frac1{J} \sum_{i=1}^J x^i_t.
\end{equation}
\begin{figure}
\centering
\begin{tikzpicture}[scale=0.75]
      \draw[->] (-4,0) -- (4,0) node[above] {$t$};
      \draw[->] (0,-1.5) -- (0,1.5) node[left] {$\R$};
      
          \draw[scale=1,domain=-4:4,smooth,samples=50,variable=\x] plot ({\x},{atan(2*sin(500* \x /6.28))/70 - .65}); % node[right] {$x^1$};
    \draw[scale=1,domain=-4:4,smooth,samples=50,variable=\x] plot ({\x},{atan(2*sin(500* \x /6.28))/70 - .4}); % node[right] {$x^2$};
        \draw[scale=1,domain=-4:4,smooth,samples=50,variable=\x] plot ({\x},{atan(2*sin(500* \x /6.28))/70 }); % node[right] {$x^3$};
    \draw[scale=1,domain=4:-4,smooth,samples=50,variable=\x] plot ({\x},{atan(2*sin(500* \x /6.28))/70 + .4}); % node[left] {$x^4$};
        \draw[scale=1,domain=4:-4,smooth,samples=50,variable=\x] plot ({\x},{atan(2*sin(500* \x /6.28))/70 + .65}) ; % node[left] {$x^5$};
        
         \draw	(-4, .4) node[anchor=east] {$ \ux^{J}_t \ \Big \{$};
         \draw	(-4, 1.3) node[anchor=east] {$ \ox^{J+1}_t \ \big \{$};

 \end{tikzpicture}
\caption{}\label{bdefpic}
\end{figure}
See Figure \ref{bdefpic}. We first show that if $\bx$ is a minimizer of $J_T^N$ either in $\cK_T^{N,S}$ or in $\cK_T^N$, then $\ox_t^{J+1}$ and $\ux_t^{J}$ solve useful differential inequalities.
\begin{lemma}\label{dmoptcont} Suppose that $\bf x$ minimizes $J_T^N$ on $\cK_T^{N,S}$ (or on $\cK_T^N$, in which case $(\bx_t)'\equiv 0$). Then, for any $J=1, \ldots, N-1$,
\begin{gather}\label{diffineq}
 \displaystyle -(\ox_t^{J+1})''+  \frac1{(N-J)} \sum_{i\ge J+1} W'(x^i_t) -\frac2{N(N-J)}\sum_{\substack{ i \ge J+1 \\ j \le J}} K'\big(|x^i_t-x^j_t|\big) \ge 0 \\
 \displaystyle (\ux_t^{J})''-  \frac1{J} \sum_{i\le J} W'(x^i_t) -\frac2{NJ}\sum_{\substack{ i \ge J+1 \\ j \le J}} K'\big(|x^i_t-x^j_t|\big) \ge 0
\end{gather}
in the weak sense in $\R$.

If, in addition, $x_t^{J+1} > x_t^{J} + \frac1{N}$ for some $J = 1, \ldots, N$ in an open set $\Omega \subseteq \R$, then 
\begin{gather*}
 \displaystyle -(\ox_t^{J+1})''+  \frac1{(N-J)} \sum_{i\ge J+1} W'(x^i_t) -\frac2{N(N-J)}\sum_{\substack{ i \ge J+1 \\ j \le J}} K'\big(|x^i_t-x^j_t|\big) = 0 \\
 \displaystyle (\ux_t^{J})''-  \frac1{J} \sum_{i\le J} W'(x^i_t) -\frac2{NJ}\sum_{\substack{ i \ge J+1 \\ j \le J}} K'\big(|x^i_t-x^j_t|\big) = 0
\end{gather*}
in the weak sense in $\Omega$.

\end{lemma}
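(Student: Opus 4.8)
The plan is to exploit the linearized optimality of Remark \ref{remregdiscrete}: whether $\bx$ minimizes $J_T^N$ on $\cK_T^{N,S}$ or on $\cK_T^N$, it minimizes the linear functional ${\bf y}\mapsto\overline J_T^N({\bf y};\bx)$ over the non-symmetric class $\cK_T^N$ — this is precisely \eqref{optlin} in the symmetric case, and it follows from the first-variation computation at the beginning of that remark in the case $\bx\in\cK_T^N$ (where in addition $(\bx_t)'\equiv0$ by Theorem \ref{thmNparticle}(i), so the kinetic term drops out). By linearity this minimality is equivalent to
\[
\overline J_T^N({\bf v};\bx)\ge0\quad\text{for every $T$-periodic ${\bf v}\in W^{1,2}(\R;\R^N)$ with $\bx+{\bf v}\in\cK_T^N$,}
\]
and I would then feed into this inequality the rigid translations of the two groups $\{J+1,\dots,N\}$ and $\{1,\dots,J\}$.

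\textbf{Admissible block translations.}
I would fix a $T$-periodic $\phi\in W^{1,2}(\R)$ with $\phi\ge0$ and take ${\bf v}$ defined by $v^i_t=\phi_t$ for $i\ge J+1$ and $v^i_t=0$ for $i\le J$. The key observation is that $\bx+{\bf v}$ keeps the same ordering as $\bx$ and the same reciprocal distances \emph{inside} each of the two blocks; the only gap that changes is $x^{J+1}_t-x^J_t$, which becomes $x^{J+1}_t-x^J_t+\phi_t\ge\tfrac1N$. Hence $\bx+{\bf v}\in\cK_T^N$. Symmetrically, the translation ${\bf v}'$ with $(v')^i_t=-\phi_t$ for $i\le J$ and $(v')^i_t=0$ for $i\ge J+1$ (the bottom block pushed \emph{down}) is admissible, for the same reason.

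\textbf{Reading off the inequalities.}
Plugging ${\bf v}$ into $\overline J_T^N(\cdot;\bx)$: since $\sum_{i\ge J+1}\dot x^i_t=(N-J)(\ox^{J+1}_t)'$, the kinetic term equals $\tfrac{N-J}{N}\int(\ox^{J+1}_t)'\dot\phi_t\,dt$ and the potential term equals $\tfrac1N\sum_{i\ge J+1}\int W'(x^i_t)\phi_t\,dt$. In the interaction sum only the pairs $(i,j)$ with exactly one index in $\{1,\dots,J\}$ survive (for the others $v^i_t-v^j_t=0$), and for such a pair the one-dimensional ordering makes $\tfrac{x^i_t-x^j_t}{|x^i_t-x^j_t|}$ and $v^i_t-v^j_t$ have the same sign, so $\tfrac{x^i_t-x^j_t}{|x^i_t-x^j_t|}(v^i_t-v^j_t)=\phi_t$; thus the two orderings of each unordered pair $\{i,j\}$ with $i>J\ge j$ contribute a total $2K'(|x^i_t-x^j_t|)\phi_t$. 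Dividing by $\tfrac{N-J}{N}>0$ and integrating by parts in the $\phi$-terms, the inequality $\overline J_T^N({\bf v};\bx)\ge0$ for all $T$-periodic $\phi\ge0$ is exactly the weak formulation of the first inequality in \eqref{diffineq} (and $K'\le0$, so the left-hand side is indeed $\ge0$ pointwise). Repeating verbatim with ${\bf v}'$, for which the interaction contribution is identical and the kinetic term becomes $-\tfrac JN\int(\ux^J_t)'\dot\phi_t\,dt$, and then multiplying by $\tfrac NJ>0$, produces the second inequality. Since all coefficients are $T$-periodic, testing against $T$-periodic $\phi\ge0$ is the same as testing against arbitrary $\psi\in C^\infty_c(\R)$ with $\psi\ge0$ (replace $\psi$ by its $T$-periodization), so both inequalities hold in the weak sense on $\R$. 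For the equalities: if $x^{J+1}_t>x^J_t+\tfrac1N$ on an open set $\Omega$, then for $\psi\in C^\infty_c(\Omega)$ with $\|\psi\|_\infty$ small enough, its $T$-periodization $\phi$ keeps \emph{both} $\bx\pm{\bf v}$ in $\cK_T^N$ (the bridge gap stays $\ge\tfrac1N$), so $\overline J_T^N({\bf v};\bx)\ge0$ and $\overline J_T^N(-{\bf v};\bx)=-\overline J_T^N({\bf v};\bx)\ge0$ force $\overline J_T^N({\bf v};\bx)=0$, and the computation above yields the claimed equalities in the weak sense in $\Omega$.

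\textbf{Main obstacle.}
There is no genuine analytic difficulty here; the only points requiring care are (a) the admissibility of the block translations — the observation that translating a contiguous end block outward can only enlarge the single ``bridge'' gap $x^{J+1}_t-x^J_t$ — and (b) the bookkeeping of the interaction cross-terms, which collapse into the stated sums with the factor $2$ precisely because the one-dimensional ordering fixes the sign of $\tfrac{x^i_t-x^j_t}{|x^i_t-x^j_t|}$.
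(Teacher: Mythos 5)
Your proof is correct and follows essentially the same approach as the paper: invoke the linearized optimality condition of Remark \ref{remregdiscrete} over the non-symmetric class $\cK_T^N$, test against rigid upward (resp.\ downward) translations of the top (resp.\ bottom) block, track the cross-pair contributions to the interaction term (which double by the sign of $\frac{x^i-x^j}{|x^i-x^j|}$), and rescale by $\frac{N}{N-J}$ or $\frac{N}{J}$; for the equalities, use small two-sided perturbations supported where the bridge gap $x^{J+1}-x^J$ is strictly slack. The only cosmetic difference is that the paper parametrizes the two-sided perturbation by a small factor $\eps$ rather than a small $\|\psi\|_\infty$, which is the same device.
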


\begin{proof}
If $\bf x$ is a minimizer of $J_T^N$ on $\cK_T^{N,S}$, by  Remark \ref{remregdiscrete}  $\bx$ is a minimizer of the linearized functional $\overline J_T^N({\bf y}; {\bf x})$ in $\cK_T^N$, so we do not impose  any symmetry conditions on  the competitors to derive the optimality conditions. 
Let $\psi \in C^\infty_{\rm per}(\R)$, $\psi \ge 0$ on $\R$, and
\[
\tilde x^i_t :=
\begin{cases}
x^i_t +  \psi(t)& \text{if $J+1 \le i \le N$}\\
x^i_t & \text{if $1 \le i \le J$}
\end{cases}
\qquad \text{for all $t$.}
\]
Note that $ \tilde x^{i}_t $ consists of trajectories where $x^i_t$ is shifted ``upwards'' whenever $i \ge J+1$. 
Since $\tilde x^{i+1} \ge \tilde x^{i} + \frac1{N}$, we have $\tilde \bx \in \cK_T^N$.

Note that
\begin{multline*}
 \sum_{i \neq j} \int_{-T/2}^{T/2} K'\big(|x^i_t-x^j_t|\big)\frac{x^i_t-x^j_t}{|x^i_t-x^j_t|}(\tilde x^ i_t- \tilde x^j_t) dt -  \sum_{i \neq j} \int_{-T/2}^{T/2} K'\big(|x^i_t-x^j_t|\big)\frac{x^i_t-x^j_t}{|x^i_t-x^j_t|}(x^ i_t-  x^j_t) dt  =  \\
- \int_{-T/2}^{T/2}  \sum_{\substack{ i \le J \\ j \ge J+1}}   K'\big(|x^i_t-x^j_t|\big)\frac{x^i_t-x^j_t}{|x^i_t-x^j_t|}\psi(t) + \int_{-T/2}^{T/2}  \sum_{\substack{ i \ge J+1 \\ j \le J}}  K'\big(|x^i_t-x^j_t|\big)\frac{x^i_t-x^j_t}{|x^i_t-x^j_t|}\psi(t) \\
= 2\int_{-T/2}^{T/2} \sum_{\substack{ i \ge J+1 \\ j \le J}} K'\big(|x^i_t-x^j_t|\big) \psi(t) dt.
\end{multline*}
Therefore, by the minimum property \eqref{optlin} of  $\bx$,
\begin{multline*}
0 \le \overline  J_T^N(\tilde \bx; {\bf x}) - \overline J_T^N(\bx; {\bf x}) = \\
 \int_{-T/2}^{T/2} \frac1N\sum_{i \ge J+1} \dot{x}^i_t \dot \psi(t) +  \frac1N \sum_{i\ge J+1} W'(x^i_t) \psi(t) dt -\frac2{N^2}\sum_{\substack{ i \ge J+1 \\ j \le J}} K'\big(|x^i_t-x^j_t|\big) \psi(t) dt .
\end{multline*}
Multiplying the previous inequality by $N/[N-J]$ yields the weak inequality for $\ox^{J+1}_t$.
% for test functions $\psi$ that are even with respect to $t=T/4$. Since $t \mapsto (x^i_t, W'(x^i_t), K'(|x^i_t-x^j_t|))$ is also even with respect to $t=T/4$, such an inequality is verified for arbitrary positive test functions with support in $(0, T/2)$. Its validity on $\R \setminus \{2k\pi : k \in \mathbb Z\}$ will be derived by the weak inequality for $\ux^J_t$.

To prove the weak inequality for $\ux^J_t$ one can argue similarly, considering competitors of the form
\[
\tilde x^{i}_t :=
\begin{cases}
x^i_t & \text{if $J+1 \le i \le N$}\\
x^i_t - \psi(t)& \text{if $1 \le i \le J$}
\end{cases}
\qquad \text{for all $t$.}
\]
% These can be extended to $(T/2, 0]$ as before in a way that $\tilde \bx^\eps \in \cK_T^N$. First order 
Variations of $ \overline  J_T^N$ yield the following inequality in the weak sense
\[
\displaystyle (\ux_t^{J})''-  \frac1{J} \sum_{i\le J} W'(x^i_t) -\frac2{NJ}\sum_{\substack{ i \ge J+1 \\ j \le J}} K'\big(|x^i_t-x^j_t|\big) \ge 0 .
\]
in $\R$.

%With differential inequality \eqref{diffineq} in force on $(0,T/2)$, it is possible to obtain their extensions on $(-T/2,0)$ by observing that
%\[
%\ox_t^{K+1} =  -\ux_{-t}^{J}, \qquad \ux_t^{K} = -\ox_{-t}^{J+1} , \qquad J= N-K.
%\]
%Thus, for $t < 0$, inequalities for $\ox_t^{K+1}, \ux_t^{K}$ can be recovered from inequalities for $\ox_{-t}^{J+1}, \ux_{-t}^{J}$ and symmetry of $W$.

Finally, if $x_t^{J+1} > x_t^{J} + \frac1{N}$ for some $J = 1, \ldots, N$ in an open set $\Omega \subset (0,T/2)$, then it is possible to perturb trajectories in the opposite direction in space (i.e. ``downwards''), being the constraint $x_t^{J+1} = x_t^{J} + \frac1{N}$ inactive. In particular, for small $\eps > 0$ and $\psi \in C^\infty_{\rm per}(\R)$, it is possible to look at first order variations of the form
\[
\tilde x^{\eps, i}_t :=
\begin{cases}
x^i_t - \eps \psi(t)& \text{if $J+1 \le i \le N$}\\
x^i_t & \text{if $1 \le i \le J$}
\end{cases}
\qquad \text{for all $t \in \Omega$,}
\]
that yield the desired equalities.
\end{proof}

\begin{lemma}\label{mJconvex} Suppose that $\bf x$ minimizes $J_T^N$ on $\cK_T^N$  (or on $\cK_T^{N,S}$), and that $x_t^{J+1} > x_t^{J} + \frac1{N}$ for some $J = 1, \ldots, N$ in an open set $\Omega \subseteq \R$. Then, the map $t \mapsto \ox_t^{J+1} - \ux_t^{J}$ is of class $C^2(\Omega)$, and
\begin{multline}\label{eqMj}
 \displaystyle (\ux_t^{J} - \ox_t^{J+1})'' +  \frac1{(N-J)} \sum_{i\ge J+1} W'(x^i_t)-  \frac1{J} \sum_{i\le J} W'(x^i_t) \\ -\frac2{NJ}\sum_{\substack{ i \ge J+1 \\ j \le J}} K'\big(|x^i_t-x^j_t|\big)  -\frac2{N(N-J)}\sum_{\substack{ i \ge J+1 \\ j \le J}} K'\big(|x^i_t-x^j_t|\big) = 0
\end{multline}
in $\Omega$. Moreover,
\[
(\ox_t^{J+1} - \ux_t^{J} )'' \ge 2\min_{0 < r \le 2R_0 + 2} |K'(r)|  -2\max_{|x| \le R_0 + 1} |W'(x)| \qquad \text{in $\Omega$}.
\]
\end{lemma}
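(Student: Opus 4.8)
The plan is to add the two equalities furnished by Lemma~\ref{dmoptcont} on $\Omega$, to bootstrap the resulting identity up to $C^2$ regularity, and then to estimate its right-hand side by means of the uniform bound of Theorem~\ref{thmNparticle}(iii).

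Since $x_t^{J+1}>x_t^J+\frac1N$ on $\Omega$, the ``if in addition'' part of Lemma~\ref{dmoptcont} provides, in the weak sense on $\Omega$, one equality for $(\ox_t^{J+1})''$ and one for $(\ux_t^{J})''$; summing them yields exactly \eqref{eqMj}. To upgrade the regularity, note that $t\mapsto x^i_t$ is continuous (since $\bx\in W^{1,2}_{\rm per}$), hence $W'(x^i_t)$ is continuous because $W'\in C^0$; moreover, for $i\ge J+1$ and $j\le J$ one has $|x^i_t-x^j_t|\ge\frac{i-j}{N}\ge\frac1N>0$, so the argument of $K'$ stays away from the singularity of $K'$ at the origin and $K'(|x^i_t-x^j_t|)$ is continuous as well. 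Thus each of the two equalities exhibits $(\ox_t^{J+1})''$, respectively $(\ux_t^{J})''$, as a continuous function on $\Omega$; since a distribution with continuous second distributional derivative differs from a classical $C^2$ primitive by an affine function, $\ox^{J+1},\ux^{J}\in C^2(\Omega)$ and \eqref{eqMj} holds classically. (When $\bx$ is a minimizer on $\cK_T^N$ it is stationary by Theorem~\ref{thmNparticle}(i), so $\ox^{J+1}-\ux^{J}$ is constant and this step is trivial; the estimate below is still read off from \eqref{eqMj}.)

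For the lower bound, I would first rewrite \eqref{eqMj}, using $\frac2{NJ}+\frac2{N(N-J)}=\frac2{J(N-J)}$, as
\[
(\ox_t^{J+1}-\ux_t^J)''=\frac1{N-J}\sum_{i\ge J+1}W'(x^i_t)-\frac1{J}\sum_{i\le J}W'(x^i_t)-\frac2{J(N-J)}\sum_{\substack{i\ge J+1\\ j\le J}}K'\big(|x^i_t-x^j_t|\big).
\]
Because $K$ is nonincreasing, $K'\le0$, so $-K'(|x^i_t-x^j_t|)=|K'(|x^i_t-x^j_t|)|\ge0$; by Theorem~\ref{thmNparticle}(iii) we have $|x^i_t|\le R_0+1$ for all $i,t$, hence $0<|x^i_t-x^j_t|\le 2R_0+2$ and $|K'(|x^i_t-x^j_t|)|\ge\min_{0<r\le 2R_0+2}|K'(r)|$ for every term; since the double sum over $\{i\ge J+1,\ j\le J\}$ has exactly $(N-J)J$ terms, $-\frac2{J(N-J)}\sum K'(|x^i_t-x^j_t|)\ge 2\min_{0<r\le 2R_0+2}|K'(r)|$. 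Finally, $|W'(x^i_t)|\le\max_{|x|\le R_0+1}|W'(x)|$ (again Theorem~\ref{thmNparticle}(iii)) makes each of the two $W'$-averages at least $-\max_{|x|\le R_0+1}|W'(x)|$. Adding the three contributions gives the claimed inequality on $\Omega$.

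The only point that requires care is the regularity step: one must invoke the distance constraint to keep $|x^i_t-x^j_t|$ bounded away from $0$ for the index pairs that actually occur in the sums, so that the singularity of $K'$ at the origin is never seen and the right-hand sides of the equalities are genuinely continuous. The rest is bookkeeping of the signs and of the combinatorial factor $(N-J)J$.
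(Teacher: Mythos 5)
Your argument is correct and follows essentially the same route as the paper: obtain \eqref{eqMj} weakly by summing the two equalities of Lemma~\ref{dmoptcont}, bootstrap to $C^2$ regularity via the Sobolev embedding $W^{1,2}\hookrightarrow C^{0}$ and continuity of the right-hand side, and then read off the lower bound using the uniform bound of Theorem~\ref{thmNparticle}(iii) and the sign of $K'$. The only cosmetic difference is that the paper keeps the two coefficients $\frac{2}{NJ}$ and $\frac{2}{N(N-J)}$ separate and bounds the two double sums individually, while you first merge them into $\frac{2}{J(N-J)}$; and you make explicit the (correct and worth noting) point that the ordering constraint keeps $|x^i_t-x^j_t|\ge\frac1N$, so the singularity of $K'$ at the origin never enters.
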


\begin{proof} The equality \eqref{eqMj} in the weak sense follows directly by Lemma \ref{dmoptcont}. Note that curves $x^i_t$ belong to $W^{1,2}(0,T)$, and therefore they are of class $C^{1/2}(\Omega)$. Thus, $t \mapsto \ox_t^{J+1} - \ux_t^{J}$ is of class $C^2(\Omega)$, with $1/2$-H\"older second order derivative, and \eqref{eqMj} holds in the classical pointwise sense.

Recall that in view of Theorem \ref{thmNparticle} (iii), optimal trajectories $x^i_t$ are bounded independently of $i, t, N$, that is $|x^i_t| \le R_0 + 1$ for all $t \in [0,T]$ and $i = 1, \ldots N$. So, in particular,  $\pm W'(x^i_t) \ge -\max_{|x| \le R_0 + 1} |W'(x)|$ for all $i, t$.
 On the other hand, $- K'\big(|x^i_t-x^j_t|\big) \ge - \max_{0 < r \le 2R_0 + 2} K'(r)=  \min_{0 < r \le 2R_0 + 2} |K'(r)|$, since $K$ is nonincreasing, Therefore 
 by \eqref{eqMj} we get  that in $\Omega$ there holds
\begin{align*}  (\ox_t^{J+1} - \ux_t^{J} )'' \ge& \ - \max_{|x| \le R_0 + 1} |W'(x)| \left( \frac1{(N-J)} \sum_{i\ge J+1}1+  \frac1{J} \sum_{i\le J} 1   \right)\\ & + \min_{0 < r \le 2R_0 + 2} |K'(r)|\left(
\frac2{NJ}\sum_{\substack{ i \ge J+1 \\ j \le J}}1+\frac2{N(N-J)}\sum_{\substack{ i \ge J+1 \\ j \le J}}1\right)\\
&= -2 \max_{|x| \le R_0 + 1} |W'(x)| +2  \min_{0 < r \le 2R_0 + 2} |K'(r)| .\end{align*} 

\end{proof}

\subsection{Saturation of the distance  constraint}
Finally, we show that minimizers saturate the distance constraints under the further assumption \eqref{coew2}.

\begin{proof}[Proof of Theorem \ref{propositionsaturation}] As for Theorem \ref{thmNparticle} (iii), we will prove the result only for $\bx\in \cK_T^{N,S} $ minimizing $J_T^N$ in $\cK_T^{N,S}$, being the other case is completely analogous (and even easier). Define, as in Appendix \ref{algapp},
\[
d^i_t = d^i(\bx_t) := x_t^{i} - x_t^{i-1}, \qquad \text{for $i=2, \ldots, N$},
\]
and
\[
\bm^J_t = \bm^J(\bx_t) :=  \ox_t^{J+1} - \ux_t^{J}=  \frac1{N-J} \sum_{i=J+1}^N x^i_t - \frac1{J} \sum_{i=1}^J x^i_t \qquad \text{for $J=1, \ldots, N-1$}.
\]
Let $\hat t$ be a time at which the (periodic) map $t \mapsto \bm^1_t$ reaches its global maximum on $\R$. We first claim that $d^2_{\hat t} = x^2_{\hat t} - x^1_{\hat t} = \frac1{N}$. If the claim is not true, then  $x^2_{t} - x^1_{t} > \frac1{N}$ for all $t$ in a neighborhood $\Omega$ of $\hat t$. %Suppose that $t \neq 0$.
In this case, by Lemma \ref{mJconvex},  $\bm^1_t$ is of class $C^2(\Omega)$, and since it reaches a maximum at $t= \hat t$, $(\bm^1)'_{\hat t} = 0$. Moreover by \eqref{coew2}
\[
(\bm^1_t)'' \ge 2 \min_{0 < r \le 2R_0 + 2} |K'(r)|  -2\max_{|x| \le R_0 + 1} |W'(x)|  > 0 \quad \text{in $\Omega$},
\]
but this contradicts the fact that $\bm^1_t$ has a maximum at $t= \hat t$. Then $d^2_{\hat t} = x^2_{\hat t} - x^1_{\hat t} = \frac1{N}$, and since $x^2_{t} - x^1_{t} \ge \frac1{N}$ for all $t$, the map $t \mapsto d^2_t$ has a global minimum at $t = \hat t$. In view of Lemma \ref{lemA1},
\[
\bm^2_t = \frac{N-1}{N-2}\bm_t^1 - \frac{N}{2(N-2)} d_t^{2},
\]
hence $\hat t$ is also a global maximum for $t \mapsto \bm^2_t$.

It is possible to show in the same way by induction that $\hat t$ is a global maximum for $t \mapsto \bm^i_t$ for any $i = 1, \ldots, N-1$, and that $d^j_{\hat t} =  \frac1{N}$ for all $j = 2, \ldots, N$. Indeed, suppose that the previous claim is true for $i , j \le J$; we need to show its validity for $i, j = J+1$. First, $d^{J+1}_{\hat t} = \frac1{N}$, otherwise $x^{J+1}_{t} - x^J_{t} > \frac1{N}$ for all $t$ in a neighborhood $\Omega$ of $\hat t$. Hence, by Lemma \ref{mJconvex},  $\bm^J_t$ is of class $C^2(\Omega)$, and $(\bm^J)'_{\hat t} = 0$. Moreover,
\[
(\bm^J_t)'' \ge 2\min_{0 < r \le 2R_0 + 2} |K'(r)|  -2\max_{|x| \le R_0 + 1} |W'(x)|   > 0 \quad \text{in $\Omega$},
\]
but this contradicts the fact that $\bm^J_t$ has a maximum at $t= \hat t$, yielding that $t \mapsto d^{J+1}_t$ has a global minimum at $t = \hat t$ that is equal to $\frac1{N}$. In view of Lemma \ref{lemA1},
\[
\bm^{J+1}_t = \frac{N-1}{N-J-1}\bm^1 - \sum_{j=1}^{J}\left(\frac{N-j}{N-J-1} - \frac j{J+1} \right)d^{j+1},
\]
and since $\left(\frac{N-j}{N-J-1} - \frac j{J+1} \right) > 0$, $\hat t$ is also a global maximum for $t \mapsto \bm^{J+1}_t$. 

Having obtained that $d^j_{\hat t} =  \frac1{N}$ for all $j = 2, \ldots, N$, we may conclude by Lemma \ref{lemA3} (with $\alpha = \frac1N$) that $\bm^J_{\hat t} =  \frac 1 {2}$ for all $J = 1, \ldots, N-1$. Since $\frac 1 {2}$ is the minimal value of $\bm^J_t$, and $\bm^J_{t} $ reaches its maximum at $\hat t$, the only possibility is $\bm^J_t = \frac 1 {2}$ for all $t \in [0, T]$.
Applying again  Lemma \ref{lemA3} we get that $x^{j}_t - x^{j-1}_t = d^j_{t} =  \frac1{N}$ for all $t$ and $j$.
\end{proof}

\begin{remark} \label{remenergia}\upshape We observe that, due to Theorem \ref{propositionsaturation},  the minimization problem \eqref{jtn} in $\cK_T^{N, S}$ is actually $1$-dimensional. 

We compute the energy of a minimizer  $\bx \in \cK_T^{N,S} $  of $J_T^N$ in $\cK_T^{N,S}$.  
We define the function
\begin{equation}\label{ewbarra} 
\overline W_N(x):=  \frac1N \sum_{i=0}^{N-1} W\left(x+\frac{i}{N}\right) \end{equation}
and the positive constant  \[ K^N:=\frac1{N^2}\sum_{i \neq j\in \{0,\dots, N-1\}}  K\left(\left|\frac{i-j}{N}\right|\right). \] 
Note that $K^N \le C_K$ for all $N$ by Proposition \ref{proplimite}.

Since by Theorem \ref{propositionsaturation} $x_t^i= x_t^1+\frac{i-1}{N}$ and $\dot x_t^i=\dot x_t^j$ for all $i, j,t$,  there holds \[
J_T^N(\bx) =  \int_{-T/2}^{T/2} \frac{|\dot{x}^1_t|^2 }{2} dt +  \int_{-T/2}^{T/2}\overline W_N \left(x^1_t\right) dt- K^N T.\] 
Moreover, again using that $x_t^i= x_t^1+\frac{i-1}{N}$, by the symmetry condition we get 
\[ x_t^1+\frac{i-1}{N}=x_t^i= -x_{-t}^{N+1-i}=-x_{-t}^{1}-\frac{N-i}{N}, \text{ \quad and then \ }x_t^1=-x_{-t}^1-\frac{N-1}{N}\ \forall t. \] This in particular forces 
\[x^1_0=-\frac{N-1}{2N}=x^1_{T/2}\quad \text{ and }\quad \dot x_t^1=\dot x_{-t}^1.\] 
Using these symmetries, and recalling that $W(s)=W(-s)$ we get
 \begin{equation}\label{jtnmin} J_T^N(\bx)= 2  \int_{0}^{T/2} \frac{|\dot{x}^1_t|^2 }{2} dt +  2\int_{0}^{T/2}\overline W_N \left(x^1_t\right) dt- K^N T:=\tilde J_T^1(x^1).\end{equation}
We observe that if $\bx$ is a minimizer of $J_T^N$ in $\cK_T^{N,S}$, then $x^1_t$ is a minimizer of $\tilde J^1_T$ on the set \[
\tilde \cK_T^{1,S} :=\left\{y\in W^{1,2}\left(\left[0, \frac{T}2\right]; \R\right)  \ :\  y_0= -\frac{N-1}{2N},\  y_{\frac T4 + t} = y_{\frac T4 - t} \ \forall \ t\in [0, T/4]\right\}.\]  Indeed if $y\in \tilde K_T^{1,S}$, we may construct  ${\bf y}\in W^{1,2}(\R; \R^N) $ as follows: first of all we extend $y$ on $[-T/2, 0]$ putting $y_{-t}=-y_t-\frac{N-1}{N}$, extend it to be a $T$ periodic function, and finally define $y_t^i:= y_t+\frac{i-1}{N}$ for $i=0, \dots , N$. It is easy to check that ${\bf y}\in \cK_T^{N,S}$ and that $J_T^N({\bf {y}})=\tilde J^1_T(y)$, This permits to conclude, by minimality of $\bx$.

Reasoning as in Remark \ref{remregdiscrete}, we get that $x^1_t$ is a minimizer of the following functional 
\[ \overline J_T^1(y; x^1) := 2\int_{0}^{T/2} \dot{x}^1_t \dot{y}_t  dt + 2  \int_{0}^{T/2}  \overline W_N'(x^1_t) y_t dt\]  among all $y\in   W^{1,2}((0, T/2); \R)$  such that  $y_0=y_{T/2}= -\frac{N-1}{2N}$. Indeed we   use the decomposition  $y_t = z_t+w_t:   = \frac12(y_{t} + y_{\frac T 2 -t}) + \frac12(y_{ t} - y_{\frac T 2 -t })$. It is easy to chech that $z_{t+T/4}=z_{T/4-t}$  and $z_0=z_{T/2}= \frac{N-1}{2N}$, so $w\in \tilde K_T^{1,S}$, whereas
$w_{t+T/4}=-w_{T/4-t}$ and $w_0=0= w_{T/2}$.  It is easy to check that  $\overline J_T^1(w)=0$, using the fact that $t\to x^1_{T/4+t}$ is even  whereas $t\mapsto w(T/4+t)$ is odd.  So, by linearity we get  $\overline J_T^1(y)= \overline J_T^1(z)$ and we conclude. 
 
Let  if $\phi\in C^{\infty}_c(0,T/2)$, then $y_t=x^1_t+\delta\phi(t)\in   W^{1,2}((0, T/2); \R)$ is such that  $y_0=y_{T/2}= -\frac{N-1}{2N}$. We compute  
\[0\leq \frac{\overline J_T^1(y; x^1)-\overline J_T^1(x^1;x^1)}{\delta} =2\int_{0}^{T/2} \dot{x}^1_t \dot{\phi}(t) dt + 2  \int_{0}^{T/2}  \overline W_N'(x^1_t) \phi(t) dt\]
and then $x_t^1$ is a  $T$-periodic solution to 
\begin{equation}\label{sys}\begin{cases} (x_t^1)''=  \overline W_N'(x^1_t)=\frac1N \sum_{i=0}^{N-1} W'\left(x^1_t+\frac{i}{N}\right)  & t\in\R 
\\ x_0^1= -\frac{N-1}{2N}=x_{T/2}^1.
\end{cases}
\end{equation}
Since $W\in C^1(\R)$ by \eqref{assw0},   $t\mapsto x_t^1$ is a $C^{2}$ function,  with $\sup_t | (x_t^1)''|\leq \max_{[-R_0-1, R_0+1]} |W'(s)|$. 
\end{remark}

\begin{remark}\label{nonsat}\upshape We now observe that non stationary minimizers may not saturate the distance constraint in general. Supposing that some given $K_0, W$ satisfy the standing assumption, then $K=\alpha K_0$ satisfies as well the same assumptions for any $\alpha > 0$. If $\alpha$ is large enough (and $K_0' < 0$), then Theorem \ref{propositionsaturation} states that minimizing trajectories minimize reciprocal distances. On the other hand, if $\alpha$ is small this property may not hold. Consider indeed a potential $W$ that, in addition to \eqref{assw0}, vanishes on $[-3,-2] \cup [2,3]$ and satisfies $W \ge 1$ on $[-1,1]$. Let $  \bx\in \cK_T^N$ be any minimizer of $J_T^N$ in $\cK_T^N$ and assume by contradiction that 
\begin{equation}\label{xdr}
x^{i+1}_t = x^{i}_t + \frac1{N}\qquad \text{for all $t$ and $i$} 
\end{equation}
holds. We first estimate $J_T^N(x)$ from above via a suitable competitor ${\bf y} \in \cK_T^N$. For $N$ odd,
\[
y^i_t :=
\begin{cases}
-2+\frac iN & i < \frac{N+1}2, \\
0 & i = \frac{N+1}2,\\
-\left(-2+\frac {N+1-i}N\right) & i > \frac{N+1}2.
\end{cases}
\]
Then,
\[
 \int_{-T/2}^{T/2} \frac{|\dot{x}^{\frac{N+1}2}_t|^2 }{2} dt +  \int_{-T/2}^{T/2}\frac1N \sum_{i} W(x^i_t) dt- \alpha K_0^N T\le J_T^N(x) \le J_T^N(y) \le \frac TNW(0).
\]
Note that $x^{\frac{N+1}2}_0 = 0$. Moreover, $\int_{-T/2}^{T/2} \frac{|\dot{x}^{\frac{N+1}2}_t|^2 }{2} dt$ can be bounded by a constant that does not depend on $N$ and $\alpha \le 1$, so there exists $\delta > 0$ such that $|x^{\frac{N+1}2}_t| \le 1/2$ for all $t \in (-\delta, \delta)$. By \eqref{xdr}, $|x^{i}_t| \le 1$ and $W(x^{i}_t) \ge 1$ for all $i$ and $t \in (-\delta, \delta)$, and $\delta$ does not depend on $N$. Thus,
\[
J_T^N(x) \ge \int_{-\delta}^{\delta}\frac1N  \sum_{i} W(x^i_t) dt- \alpha K_0^N T \ge 2\delta - \alpha C_{K_0} T,
\]
and for $\alpha$ small and $N$ large $J_T^N(x) \ge 2\delta - 2\alpha I(\chi_{(0,1)}) T >  \frac TNW(0) \ge J_T^N(y)$ which is a contradiction.

\end{remark}

\subsection{Some numerical experiments}\label{numan}

We now show a few numerical approximations of minimizers of the functional $J^N_T$ (on $\cK^{N,S}_T$). These have been obtained using a gradient descent method, with the following choice of the data:
\[
K_0(r) = \frac1{\sqrt{r}}, \qquad W(x) = 10\Big((0.5-x^2)^{a,+} + (x^2-3)^{a,+}\Big), \qquad T=50,
\]
where $(x)^{a,+} := (.1 \sqrt{1+(10x)^2}+ x)/2$ is a regularized version of the positive part of $x$. Though $W$ does not satisfy our standing assumptions (it does not have truly flat regions), it is very close to zero on $\pm[\sqrt{1/2}, \sqrt3$]. In our results, shown in Figure \ref{plots}, three different scenarios show up. When the non-local aggregative kernel is strong enough, that is when $K = 5 K_0$, periodic trajectories are evenly spaced on the whole time interval. This is in line with the assertion of Theorem \ref{propositionsaturation}. On the other hand, with a mild interaction kernel ($K = K_0$ in our pictures), agents split into two groups, that occupy the two wells (the example in Remark \ref{nonsat} is inspired by this scenario). When the number of agents  $N$ is odd, the middle $(N+1)/2$-th agent oscillates between the two groups which are located in the wells, while if $N$ is even then groups are simmetrically distributed. There is a final intermediate case, e.g. when $K = 3.1 K_0$, where the aggregation force is mild, and agents break down into two groups. In this case, it is not convenient for agents to saturate always the constraint, but to create some gap between two temporary groups while crossing the region of the potential between the two wells.

\begin{figure}
\centering
\begin{tabular}{cc}
\small $N = 18, \ K = 5 K_0$ & \small $N = 18, \ K = 3.1 K_0$ \\
\includegraphics[width=6.5cm]{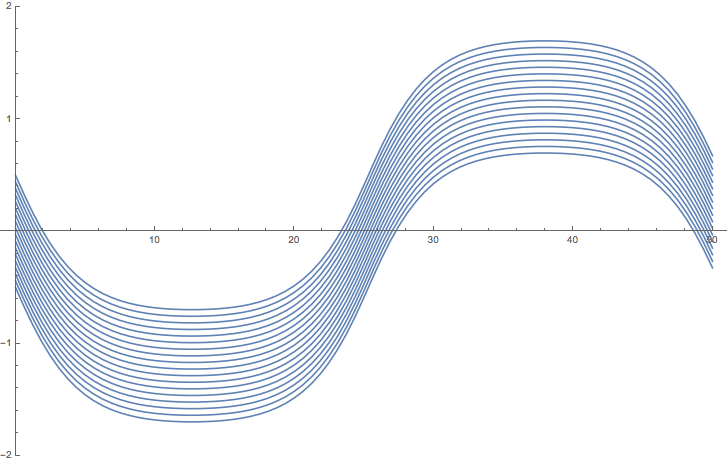} & \includegraphics[width=6.5cm]{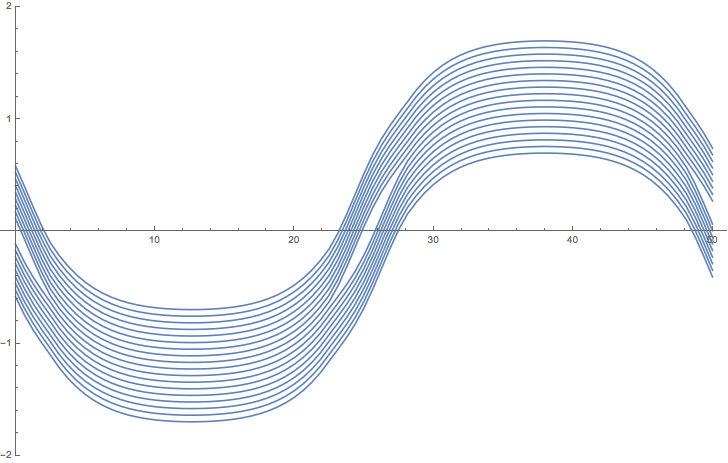} \\
\small $N = 18,\  K = K_0$ & \small $N = 17,\ K = K_0$ \\
\includegraphics[width=6.5cm]{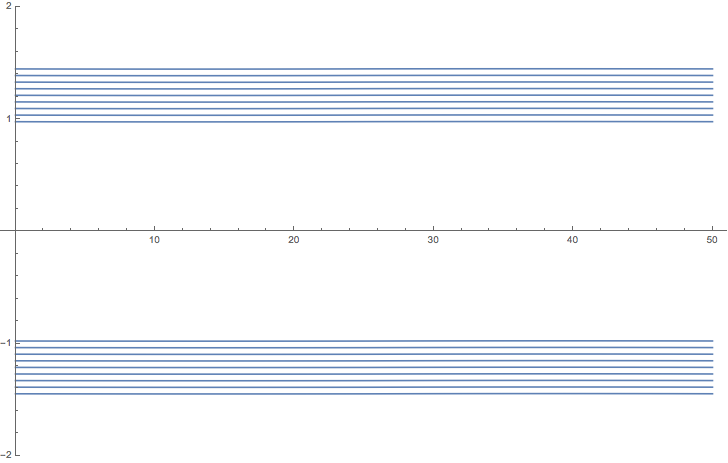} & \includegraphics[width=6.5cm]{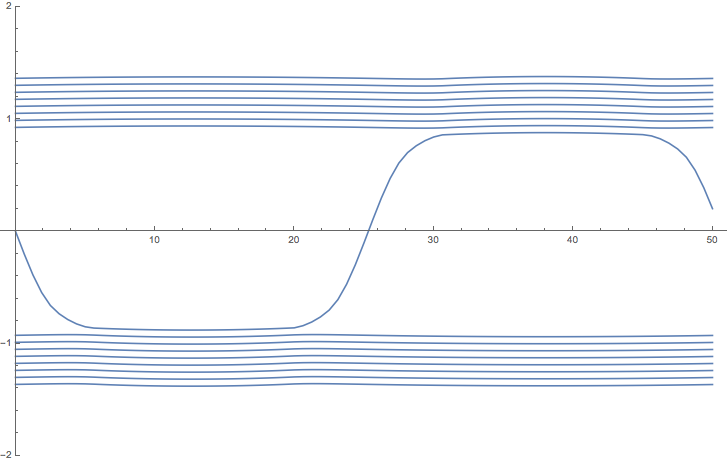} \\
\end{tabular}
\caption{\footnotesize Minimizers of $J_T^N$ in $\cK_T^{N,S}$ for different choices of the interaction kernel $K$ and number of agents $N$. Every line represents the evolution of a single agent. Time $t$ and position $x_t^i$ are in the horizontal and vertical axis respectively.}\label{plots}
\end{figure}

\section{$\Gamma$-convergence as $N\to +\infty$ and convergence of minimizers in dimension $1$} \label{sectiongamma}  
In this section we provide the   $\Gamma$-convergence  as $N\to +\infty$ of  the discrete energies  $J_T^{N} $  defined in \eqref{jtn} for $\bx \in \mathcal{K}_T^{N}$ (or  $\bx\in\cK_T^{N,S}$) to the energy  $J_T$, defined in \eqref{jt}, for $(m,w)\in \mathcal{K}_T$ (resp. $(m,w)\in \cK_T^S$). Moreover, we provide  
convergence of  minimal solutions of the $N$-agent system to minimal solutions of the mean-field problem. 
 This result is based on a standard application of  the $\Gamma$-convergence result,  and coercivity of the involved functionals. 
 
 First of all we extend the energies to include the constraints as follows 
\[J_T(m,w)=\begin{cases}J_T(m,w) & \text{if } (m,w)\in \mathcal{K}_T\\ +\infty &\text{ otherwise}   \end{cases}\qquad J_T^N(\bx)=\begin{cases} J_T^N(\bx) & \text{if } \bx\in \mathcal{K}_T^{N}\\ +\infty &\text{ otherwise}.   \end{cases}\] 
\begin{theorem}\label{gamma}  Let $T>0$. The following holds for the extended energies defined above. 
\begin{enumerate}
\item  Let $\bx = \bx^N \in \mathcal{K}_T^{N}$ be such that, as $N\to +\infty$,  $m_{\bx}^N\to \mu$  in $C([-T/2,T/2], \mathcal{P}(\R))$
and  $ \int_{-T/2}^{T/2}\delta_t\otimes w^N_\bx(t)dt\to \zeta \in \mathcal M([-T/2, T/2] \times \R)$ narrowly. Then    $\mu(t,dx)= m(t)dx$, with $m\in C(\R,  \cP_{2}^{r})$,  $T$-periodic $\zeta(dt,dx)= w(t,x)dt\otimes   dx$, $(m,w)\in \cK_T$ and  \[\liminf_N J_T^N(\bx)\geq J_T(m, w).\]
\item Let $(m,w)\in \mathcal{K}_T$. Then there exists  $\bx = \bx^N \in\cK_T^N $ such that as   $N\to +\infty$,  $m_{\bx }^N(\cdot)\to m(t,x)dx$  in $C([-T/2,T/2], \mathcal{P}_2(\R))$, $ \widetilde w^N_\bx = \int_{-T/2}^{T/2}\delta_t\otimes w^N_{\bx}(t)dt\to  w dt\otimes dx $ narrowly and \[\limsup_N J_T^N(\bx)\le  J_T(m,w).\]
\end{enumerate} 

In particular, \[J_T^N(\bx) \stackrel{\Gamma}{\longrightarrow}  J_T(m,w) \qquad \text{as $N\to +\infty$,} \] with respect  to  convergence in $C([-T/2,T/2], \mathcal{P}_2(\R))$ of the empirical measure $m_{\bx}^N(\cdot)$   to $m(\cdot)dx$  and  narrow  convergence of the  empirical measure $  \int_{-T/2}^{T/2}\delta_t\otimes w^N_\bx(t)dt$  to $w \  dt\otimes  dx$.

\end{theorem}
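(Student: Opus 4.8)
The plan is to prove the two halves of the $\Gamma$-convergence separately — the $\Gamma$-$\liminf$ inequality (item (i)) and the $\Gamma$-$\limsup$ inequality with a recovery sequence (item (ii)) — after which the $\Gamma$-convergence statement is just their conjunction by definition. I would also record once the equi-coercivity bound $J_T^N(\bx)\ge-(C_W+C_K)T$ already used in the proof of Theorem~\ref{thmNparticle} (it gives, via coercivity of $W$ and Proposition~\ref{proplimite}, a uniform $W^{1,2}_{\mathrm{per}}$ bound on bounded-energy sequences), since this is what makes the $\Gamma$-convergence usable for the convergence of minimizers later on. The whole difficulty is the presence of the distance constraint $|x^i_t-x^j_t|\ge1/N$, which I would handle in the $\liminf$ by Proposition~\ref{convparticelle} and in the $\limsup$ by an explicit one-dimensional construction.

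\textbf{The $\Gamma$-$\liminf$ inequality.} I would assume $\liminf_N J_T^N(\bx^N)<+\infty$ and, along a subsequence realizing it, $J_T^N(\bx^N)\le C$. As in the proof of Theorem~\ref{thmNparticle} (i)--(ii), coercivity of $W$ and the bound $\cI(m^N_{\bx})\le C_K$ of Proposition~\ref{proplimite} give $\frac1N\sum_i\int|\dot x^i_t|^2\,dt\le C$ and $\frac1N\sum_i\int|x^i_t|^2\,dt\le C$; by \eqref{form1} and Remark~\ref{remreg} the curves $t\mapsto m^N_{\bx}(t)$ are then equi-$\tfrac12$-Hölder in $d_2$ with uniformly bounded second moments, so the (uniform, narrow) limit $\mu$ is a $T$-periodic curve in $C(\R,\cP_2(\R))$, and since $|x^i_t-x^j_t|\ge1/N$ at \emph{every} $t$, Proposition~\ref{convparticelle} (with $c=1$, $d=1$) forces $\mu(t)=m(t)\,dx$ with $0\le m(t)\le1$, i.e.\ $m\in C(\R,\cP_2^r(\R))$. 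From \eqref{tv} and Cauchy--Schwarz, $\int_{-T/2}^{T/2}|w^N_{\bx}(t)|(\R)\,dt\le C$, so $\widetilde w^N_{\bx}$ is tight; by Lemma~\ref{lemmalsc} (see \cite[Lemma~9.4.3]{ags}) its narrow limit is $w\,dt\otimes dx$ with $w\ll dt\otimes m(t,dx)$, and passing to the limit in the distributional identity $-\partial_t m^N_{\bx}+\di(w^N_{\bx})=0$ gives $(m,w)\in\cK_T$. Finally I would pass to the $\liminf$ term by term: the kinetic term by Lemma~\ref{lemmalsc} on $[-T/2,T/2]\times\R$ applied to $dt\otimes m^N_{\bx}(t)$ and $\widetilde w^N_{\bx}$; the potential term by Lemma~\ref{lscW} together with Fatou; while the interaction term is in fact \emph{continuous}, since $\cI(m^N_{\bx}(t))\to\cI(m(t))$ pointwise in $t$ by Proposition~\ref{proplimite} and $|\cI(m^N_{\bx}(t))|\le C_K$ uniformly, so dominated convergence applies. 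Summing (and using that the interaction term converges) yields $\liminf_N J_T^N(\bx^N)\ge J_T(m,w)$. The symmetric case follows since the relations $m^N_{\bx}(-t)=\gamma_\#m^N_{\bx}(t)$ and $m^N_{\bx}(\tfrac T4+t)=m^N_{\bx}(\tfrac T4-t)$ pass to the limit.

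\textbf{The $\Gamma$-$\limsup$ inequality.} Here I would build an explicit competitor from the one-dimensional rearrangement of $m(t)$. Assuming $J_T(m,w)<+\infty$ (otherwise there is nothing to prove), set $F_t(x):=m((-\infty,x];t)$ and let $X_t:=F_t^{-1}:(0,1)\to\R$ be its generalized inverse; since $m(t)\le1$, the function $F_t$ is $1$-Lipschitz, hence $X_t$ is \emph{expanding}, $X_t(b)-X_t(a)\ge b-a$ for $0<a\le b<1$. Place the agents at the averaged quantiles
\[
x^i_t:=N\int_{(i-1)/N}^{i/N}X_t(s)\,ds,\qquad i=1,\dots,N.
\]
Then $\bx_t\in\cK_T^N$: monotonicity of $X_t$ gives the ordering, and for $j>i$
\[
x^j_t-x^i_t=N\int_0^{1/N}\Big[X_t\big(s+\tfrac{j-1}{N}\big)-X_t\big(s+\tfrac{i-1}{N}\big)\Big]ds\ \ge\ N\int_0^{1/N}\tfrac{j-i}{N}\,ds\ =\ \tfrac{j-i}{N}\ \ge\ \tfrac1N
\]
by the expanding property; this is exactly where $m(t)\le1$, i.e.\ dimension one, enters. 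Also $\bx$ is $T$-periodic, and since the $1$-d rearrangement is an isometry from $(\cP_2,d_2)$ into $L^2(0,1)$ and $t\mapsto m(t)$ has finite action, $t\mapsto X_t\in W^{1,2}_{loc}(\R;L^2(0,1))$, so each $x^i_t=\langle X_t,N\mathbf 1_{((i-1)/N,i/N)}\rangle_{L^2}\in W^{1,2}_{loc}(\R)\cap C^{1/2}$. For the energy, $\dot x^i_t=N\int_{(i-1)/N}^{i/N}\partial_t X_t(s)\,ds$, so by Jensen on each subinterval
\[
\frac1N\sum_{i=1}^N|\dot x^i_t|^2\ \le\ \sum_{i=1}^N\int_{(i-1)/N}^{i/N}|\partial_t X_t(s)|^2ds\ =\ \int_0^1|\partial_t X_t(s)|^2ds\ =\ |\dot m|(t)^2\ \le\ \int_\R\frac{|w(t,x)|^2}{m(t,x)}dx,
\]
giving the $\limsup$ bound on the kinetic part. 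The inverse CDF of the empirical measure $m^N_{\bx}(t)$ is precisely the orthogonal projection $P_NX_t$ of $X_t$ onto step functions on the grid $\{i/N\}$, hence $d_2(m^N_{\bx}(t),m(t))=\|P_NX_t-X_t\|_{L^2(0,1)}\to0$ uniformly in $t$ (as $\{X_t:t\in[-T/2,T/2]\}$ is compact in $L^2(0,1)$), so $m^N_{\bx}\to m$ in $C([-T/2,T/2],\cP_2(\R))$; combined with $P_NX_t\to X_t$ in $L^2$, the quadratic growth of $W$, the uniform bound $\sup_t\int|x|^2 dm(t)<\infty$ and Vitali's theorem, this gives $\frac1N\sum_i W(x^i_t)=\int_0^1 W(P_NX_t)\,ds\to\int_\R W\,dm(t)$ for each $t$ with a uniform bound, hence convergence of the $t$-integrals by dominated convergence; since $|x^i_t-x^j_t|\ge|i-j|/N\ge1/N$, Proposition~\ref{proplimite} and dominated convergence likewise give convergence of the interaction terms, and testing against smooth test functions one checks $\widetilde w^N_{\bx}\to w\,dt\otimes dx$. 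Thus $\limsup_N J_T^N(\bx^N)\le J_T(m,w)$. When $(m,w)\in\cK_T^S$, the symmetries $m(-t)=\gamma_\#m(t)$, $m(\tfrac T4+t)=m(\tfrac T4-t)$ become $X_{-t}(s)=-X_t(1-s)$ and $X_{\frac T4+t}=X_{\frac T4-t}$, so $x^{N+1-i}_t=-x^i_{-t}$ and $x^i_{\frac T4+t}=x^i_{\frac T4-t}$, i.e.\ $\bx\in\cK_T^{N,S}$.

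\textbf{Main obstacle.} I expect the genuinely delicate point to be the $\Gamma$-$\limsup$ construction \emph{under the distance constraint}: one must produce competitors $\bx^N$ that are $1/N$-separated at every time and lose no energy in the limit. The mechanism that makes this possible — $m(t)\le1$ forces $F_t$ to be $1$-Lipschitz, so $X_t$ is expanding and the evenly spaced quantiles are automatically $1/N$-apart — is intrinsically one-dimensional, which is why the theorem is stated only for $d=1$; the surrounding verifications (uniform-in-$t$ $L^2$-convergence of the averaging projections, and the ensuing convergence of the potential and interaction terms when $m$ need not be compactly supported) are routine but require some care.
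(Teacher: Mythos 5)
Your $\Gamma$-$\liminf$ argument is essentially the paper's: Proposition~\ref{convparticelle} identifies the density bound, Lemma~\ref{lemmalsc} handles the kinetic part, Lemma~\ref{lscW} with Fatou the potential, and Proposition~\ref{proplimite} with dominated convergence the interaction, so there is nothing to flag there.

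Your $\Gamma$-$\limsup$ construction, however, is genuinely different from the one in the paper, and I think it is worth spelling out what is gained. The paper mollifies $(m,w)$ so that $m_\eps$ is continuous, strictly positive, and $<1$ (Lemma~\ref{glslemma0}); solves the ODE flow of $v_\eps = w_\eps/m_\eps$ (Lemma~\ref{glslemma1}); restricts the initial datum to a ball $(-R,R)$ with mass $M_{R,\eps}>\max m_\eps$ so that the evenly spaced quantiles of $m_\eps(0)$ restricted to $(-R,R)$ are automatically $1/N$-apart and stay so under the flow; and then takes $N\to\infty$, $R\to\infty$, $\eps\to 0$ in that order with a diagonal extraction. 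You instead place the agents at the averaged quantiles of $m(t)$ itself, via the inverse CDF $X_t=F_t^{-1}$; the one line that makes the constraint work is that $m(t)\le1$ forces $F_t$ to be $1$-Lipschitz, hence $X_t$ expanding, so that averaged quantiles are automatically $1/N$-apart without any regularization. The kinetic bound is a one-line Jensen, and the Wasserstein convergence is immediate from the fact that in one dimension the quantile map embeds $\cP_2(\R)$ isometrically into $L^2(0,1)$ and averaged quantiles are the $L^2$-orthogonal projection. Your route avoids mollification, localization, and the three-parameter diagonal argument, at the (mild) price of working with the generalized inverse $F_t^{-1}$ (which may jump where $m(t)$ vanishes on a set of positive measure) and of establishing $t\mapsto X_t\in W^{1,2}_{\mathrm{loc}}(\R;L^2(0,1))$ via the isometry and the finite-action hypothesis. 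Both approaches are valid; yours is cleaner and more clearly reveals why dimension one is essential. The symmetry transfer $m(-t)=\gamma_\# m(t)\Rightarrow X_{-t}(s)=-X_t(1-s)\Rightarrow x^{N+1-i}_t=-x^i_{-t}$ works as you claim and matches the paper's Remark~\ref{remarksim}.

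One step in your write-up deserves to be more explicit: the assertion that $\widetilde w^N_\bx\to w\,dt\otimes dx$. After noting $\frac1N\sum_i\dot x^i_t\,\phi(t,x^i_t)=\int_0^1 (P_N\partial_t X_t)(s)\,\phi(t,(P_NX_t)(s))\,ds$, you need either the Lagrangian relation $\partial_t X_t(s)=\bigl(w(t,\cdot)/m(t,\cdot)\bigr)(X_t(s))$ for a.e.\ $(t,s)$ (which in $1$d follows from differentiating $F_t(X_t(s))=s$ and using the continuity equation), or the structural uniqueness fact that any two momenta $w_1,w_2\ll dt\otimes m$ with finite kinetic energy solving the same $1$d continuity equation differ by a function $c(t)$ that must vanish because $\int_\R m^{-1}(t,x)\,dx=+\infty$ whenever $m(t)$ is a probability density. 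Either argument is short, but ``testing against smooth test functions one checks'' is doing real work there, and it is worth showing.
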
 
For the proof of this result we will need some preliminary lemmata.  Let $\rho_\eps$ the following family of strictly positive mollifiers
\[
\rho_\eps(x) = (2\pi\eps)^{-1/2} e^{-x^2/(2\eps)} \qquad \text{on $\R$, $\eps > 0$}.
\]

\begin{lemma}\label{glslemma0} Let $(m,w)\in \mathcal{K}_T$ and $\eps > 0$. Then, $$m_\eps(t) := m(t) \star \rho_\eps \qquad w_\eps(t) := w(t) \star \rho_\eps  $$
satisfies
\begin{itemize}
\item[{\it i)}] $\int_{\R} \left| \frac{w_\eps(t,x)}{m_\eps(t,x)}\right|^2 m_\eps(t,x) \, dx \le \int_{\R} \left| \frac{w(t,x)}{m(t,x)}\right|^2 m(t,x) \, dx$ \, for all $t$,
\item[{\it ii)}] $(m_\eps, w_\eps) \in \mathcal{K}_T$,
\item[{\it iii)}] $\max_{[0,T] \times \R} m_\eps(t,x) < 1$.
\item[{\it iv)}] $m_\eps \to m$ in $C([0,T]; \cP_2(\R))$ and $w_\eps \to w$ in $L^2((0,T) \times \R)$ as $\eps \to 0$.
\end{itemize}

\end{lemma}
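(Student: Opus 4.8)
The plan is to verify the four items in turn, using throughout that $\rho_\eps$ is a strictly positive probability density on $\R$ with $\int_\R y^2\rho_\eps(y)\,dy=\eps$, together with the pointwise identity $\big(m_\eps(t,x),w_\eps(t,x)\big)=\int_\R\rho_\eps(x-y)\big(m(t,y),w(t,y)\big)\,dy$ (we identify $m(t),w(t)$ and their mollifications with their densities). For {\it i)}, observe that $\Phi(a,b):=|b|^2/a$ on $(0,+\infty)\times\R$ is convex and positively $1$-homogeneous, hence sublinear; since $m(t)$ is a probability measure and $\rho_\eps>0$ one has $m_\eps(t,x)>0$ for every $x$, so Jensen's inequality for $\Phi$ with respect to the probability density $y\mapsto\rho_\eps(x-y)$ gives $\Phi(m_\eps(t,x),w_\eps(t,x))\le\int_\R\rho_\eps(x-y)\,\Phi(m(t,y),w(t,y))\,dy$ for all $x$. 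Integrating in $x$, using Fubini and $\int_\R\rho_\eps=1$, yields {\it i)}.

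For {\it ii)}, I would check the defining conditions of $\mathcal{K}_T$. Each $m_\eps(t)$ is a probability measure (a convolution of two such), has density $0\le m_\eps(t,x)=\int_\R\rho_\eps(x-y)\tilde m(t,y)\,dy\le1$ since $0\le\tilde m(t)\le1$, and has finite second moment (convolving with $\rho_\eps$ amounts to adding an independent $N(0,\eps)$ variable), so $m_\eps(t)\in\cP_{2}^{r}$. Continuity and $T$-periodicity of $t\mapsto m_\eps(t)$ descend from those of $m$: coupling an optimal plan $(X,X')$ for $(m(t),m(s))$ with an independent $Y\sim\rho_\eps$ gives $X+Y\sim m_\eps(t)$, $X'+Y\sim m_\eps(s)$, whence $d_2(m_\eps(t),m_\eps(s))\le d_2(m(t),m(s))$, and periodicity is preserved by convolution in $x$. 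Since $m_\eps(t,x)>0$ everywhere, $w_\eps$ is automatically absolutely continuous w.r.t.\ $dt\otimes m_\eps(t,dx)$; convolution in $x$ commutes with $\partial_t$ and $\di$, so $-\partial_t m_\eps+\di(w_\eps)=0$ in $\mathcal D'$; and finiteness of $\int_{t_1}^{t_2}\int_\R\tfrac12|w_\eps/m_\eps|^2 m_\eps$ on every bounded time interval is {\it i)} integrated in $t$.

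For {\it iii)}, the pointwise strict bound $m_\eps(t,x)<1$ holds for all $(t,x)$ because $\int_\R\tilde m(t)=1<|\R|$ forces $\tilde m(t)<1$ on a set of positive Lebesgue measure, while $\rho_\eps>0$; hence $m_\eps(t,x)=\int_\R\rho_\eps(x-y)\tilde m(t,y)\,dy<\int_\R\rho_\eps(x-y)\,dy=1$. To turn this into the uniform gap $\max_{[0,T]\times\R}m_\eps<1$, I would use that $\{m(t):t\in[0,T]\}$ is the continuous image of a compact set, hence compact in $\cP_2(\R)$, hence has uniformly integrable second moments and in particular is tight (Lemma~\ref{equiconv}); consequently $\sup_{t\in[0,T]}m_\eps(t,x)=\sup_t\int_\R\rho_\eps(x-y)\,m(t,dy)\to0$ as $|x|\to+\infty$. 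Thus $m_\eps<\tfrac12$ outside some slab $[0,T]\times[-R,R]$, while on the compact set $[0,T]\times[-R,R]$ the jointly continuous function $m_\eps$ is strictly below $1$ and attains a maximum there; combining the two gives the claim.

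For {\it iv)}, the coupling above gives $\sup_{t\in[0,T]}d_2(m_\eps(t),m(t))\le\big(\int_\R y^2\rho_\eps(y)\,dy\big)^{1/2}=\sqrt\eps\to0$, i.e.\ $m_\eps\to m$ in $C([0,T];\cP_2(\R))$. For $w_\eps$, one first notes $w\in L^2((0,T)\times\R)$: since $w(t)\ll m(t)=\tilde m(t)\,dx$ with $\tilde m(t)\le1$, one has $\int_\R|w(t,x)|^2\,dx=\int_\R|w/m|^2m^2\,dx\le\int_\R|w/m|^2m\,dx$, which is integrable in $t$ by the finite-energy condition; then $w_\eps(t,\cdot)=w(t,\cdot)\star\rho_\eps\to w(t,\cdot)$ in $L^2(\R)$ for a.e.\ $t$, with $\|w_\eps(t,\cdot)\|_{L^2}\le\|w(t,\cdot)\|_{L^2}$ by Young's inequality, so dominated convergence gives $w_\eps\to w$ in $L^2((0,T)\times\R)$. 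The step I expect to be most delicate is {\it iii)}: the pointwise strict inequality is immediate, but upgrading it to a uniform gap over the non-compact slab $[0,T]\times\R$ rests on the uniform tightness of $\{m(t)\}_{t\in[0,T]}$ together with the joint continuity of $(t,x)\mapsto m_\eps(t,x)$; everything else is a soft consequence of convexity (i), of the commutation of mollification with the differential operators (ii), and of standard mollifier estimates (iv).
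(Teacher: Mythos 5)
Your proof is correct and follows the same item-by-item structure as the paper's argument: convexity of the kinetic density for (i), linearity of the continuity equation and positivity of the mollifier for (ii), strict positivity plus uniform tightness plus continuity for (iii), and standard mollifier estimates for (iv). Where the paper delegates to external results --- \cite[Lemma 8.1.9]{ags} for (i), \cite[Lemma 2.2]{cc} for the decay of $m_\eps$ at infinity in (iii), and an equicontinuity-plus-pointwise argument for the uniform $\cP_2$-convergence in (iv) --- you supply equivalent self-contained derivations (Jensen's inequality for the convex $1$-homogeneous integrand, a direct tightness-plus-decay estimate, and the coupling bound $\sup_t d_2(m_\eps(t),m(t))\le\sqrt{\eps}$); these are minor variations of the same ideas rather than a genuinely different route.
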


\begin{proof} Assertion {\it i)} is contained in \cite[Lemma 8.1.9]{ags}. Note that $m_\eps$ is $T$-periodic, and moreover the $ -\partial_t m_\eps + {\rm div}(w_\eps) = 0$ in the sense of distributions,  by linearity.  
% $m_\eps( T/4 + t) = m_\eps( T/4 - t)$, $m_\eps(-t) = \gamma m_\eps(t)$, and $m_\eps(t) \le 1$ for all $t$ by standard properties of convolution. 
Therefore, {\it ii)} holds. 

As for the upper bound on $m_\eps$, note first that $m_\eps(t,x) < 1$ for all $(t,x) \in \R\times \R$ since $\rho_\eps$ has full support on $\R$. In addition, $\rho_\eps$ is globally Lipschitz in the $x$-variable (with Lipschitz constant that may depend on $\eps$), and $\lim_{R \to \infty }\sup_{t} \int_{\R \setminus B(0,R)} m(t,x)dx = 0$, therefore the $L^1$ mass of $m(t)$ can be made arbitrarily small outside $B(0,R)$ uniformly in $t$ by choosing $R$ large. With these two facts one can control the sup-norm of $m$ in 
$\R \setminus B(0,R)$ by a small constant, see e.g. \cite[Lemma 2.2]{cc}. Namely, there is $R$ large such that $m(t,x) \le 1/2$ for all $(t,x) \in\R \times (\R \setminus B(0,R))$. On the other hand, since $m_\eps$ is continuous, $\max_{\R \times B(0, R)} m_\eps(t,x) < 1$, and {\it iii)} holds.

Finally, that $w_\eps \to w$ in $L^2(((-T/2, T/2) \times \R)$ as $\eps \to 0$ is standard. As for the convergence of $m_\eps \to m$ in $C(\R; \cP_2(\R))$, note that $m_\eps(\cdot)$ is uniformly continuous by \eqref{unmezzoh} and {\it i)}, so it is enough to check that $m_\eps(t) \to m(t)$ in $\cP_2(\R)$ for all $t$. To this aim, note that $m_\eps(t) \to m(t)$ narrowly, and $x^2 \star \rho_\eps  = x^2 + \eps^2 \int_\R y^2 \rho(y) dy$, therefore $\int_\R x^2 m_\eps(t,x) dx \to \int_\R x^2 m(t,x) dx$. We conclude by  the characterization of convergence in $\cP_2(\R)$ in Lemma \ref{equiconv}. 
\end{proof}

\begin{lemma}\label{glslemma1} Let $\eps > 0$, $(m,w)\in \mathcal{K}_T$, and $(m_\eps, w_\eps)$ as in Lemma \ref{glslemma0}. Let
\[
v^\eps(t, x) := \frac{w_\eps(t,x)}{m_\eps(t,x)} \qquad \text{on $\R \times \R$}.
\]
Then, for all $(s,x) \in [-T/2, T/2] \times \R$, the ODE
\[
X^\eps_s(x,s) = x, \quad \dot{X}^\eps_t(x,s) = v^\eps(t, X^\eps_t(x,s))
\]
has a unique maximal solution defined on $\R$. Finally, denoting by $X^\eps_t$ the flow $X^\eps_t(x) := X^\eps_t(x, 0)$,
\[
m_\eps(t) = (X^\eps_t)_\#m_\eps(0).
\]
\end{lemma}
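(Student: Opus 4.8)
The plan is to treat the characteristic ODE by classical Cauchy--Lipschitz theory once the regularity of the field $v^\eps$ is clarified, then to upgrade local to global solvability by exploiting that we are in dimension one, and finally to identify the flow through the superposition principle for the continuity equation.

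First I would check the regularity of $v^\eps$. Since $\rho_\eps$ is a Schwartz function with all derivatives bounded, $m_\eps(t,\cdot)=m(t)\star\rho_\eps$ and $w_\eps(t,\cdot)=w(t)\star\rho_\eps$ are smooth in $x$ for a.e.\ $t$, with
\[
|w_\eps(t,x)|+|\partial_x w_\eps(t,x)| \le C_\eps\,|w(t)|(\R) \le C_\eps\Big(\int_\R \big|\tfrac{dw(t)}{dm(t)}\big|^2 m(t,dx)\Big)^{1/2},
\]
where the first bound uses $\|\rho_\eps\|_\infty,\|\rho_\eps'\|_\infty<\infty$ and the second is \eqref{tv}; the right-hand side lies in $L^1(-T/2,T/2)$ because $(m,w)\in\cK_T$. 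Moreover $\{m(t):t\in\R\}$ is compact in $\cP_2(\R)$, being the continuous periodic image of $[0,T]$, hence tight, so there is $R$ with $\inf_t m(t)(B_R)\ge 1/2$, and since $\rho_\eps>0$ this gives $\inf_t \min_{|x|\le \bar R} m_\eps(t,x)>0$ for every $\bar R$. Combining these facts, $v^\eps=w_\eps/m_\eps$ is smooth in $x$ and, for each $\bar R$, the map $t\mapsto \sup_{|x|\le \bar R}\big(|v^\eps(t,x)|+|\partial_x v^\eps(t,x)|\big)$ belongs to $L^1(-T/2,T/2)$. By the Carath\'eodory version of the Cauchy--Lipschitz theorem the ODE then has, for every $(s,x)$, a unique maximal solution on an open interval, depending in a $C^1$ way on the initial datum $x$, with strictly positive spatial derivative (which solves the linearised equation).

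The hard part is global existence for \emph{every} $x$: since $m_\eps(t,\cdot)$ decays like a Gaussian, $v^\eps$ need not grow at most linearly, so global solvability cannot be read off a Gronwall estimate. I would get around this using the one-dimensional structure. Since $(m_\eps,w_\eps)$ solves the continuity equation and $\int_{-T/2}^{T/2}\!\!\int_\R |v^\eps|\,m_\eps\le\big(\int\!\!\int|v^\eps|^2 m_\eps\big)^{1/2}T^{1/2}<\infty$ by Lemma \ref{glslemma0}(i), the superposition principle (see \cite[\S 8.2]{ags}), applied on each interval $[-L,L]$, yields $\eta_L\in\cP(C([-L,L];\R))$ concentrated on integral curves of $v^\eps$ with $(e_t)_\#\eta_L=m_\eps(t)$. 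By the uniqueness from the previous step, each such curve coincides with the maximal solution through its value at any given time $s$; hence, for every $s$, the set of points whose maximal solution with initial time $s$ is defined on $[-L,L]$ has full $m_\eps(s)$-measure, thus full Lebesgue measure since $m_\eps(s)>0$ everywhere. Letting $L\to\infty$, for every $s$ the set $G_s$ of points from which the maximal solution is global has full Lebesgue measure. Now fix an arbitrary $x$ and $s$ and pick $a<x<b$ with $a,b\in G_s$: two integral curves of a scalar, locally Lipschitz field cannot cross, so on its maximal interval the solution from $(x,s)$ stays trapped between $X^\eps_\cdot(a,s)$ and $X^\eps_\cdot(b,s)$, hence is bounded on compact time sub-intervals; but a bounded solution whose velocity is locally $L^1$ in time extends continuously to the endpoints of its maximal interval and can then be continued, so the maximal interval is all of $\R$.

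Finally I would identify the flow. By the previous step $X^\eps_t:=X^\eps_t(\cdot,0)$ is, for each $t$, a globally defined increasing $C^1$ diffeomorphism of $\R$ (its inverse being the globally defined backward flow), and $X^\eps_t(x,s)=X^\eps_t\circ(X^\eps_s)^{-1}(x)$. Since $\eta_L$-a.e.\ curve $\gamma$ equals $X^\eps_\cdot(\gamma(0))$ by uniqueness, $\eta_L=(X^\eps_\cdot)_\#(e_0)_\#\eta_L=(X^\eps_\cdot)_\# m_\eps(0)$, and therefore for every $t\in[-L,L]$
\[
m_\eps(t)=(e_t)_\#\eta_L=(e_t\circ X^\eps_\cdot)_\# m_\eps(0)=(X^\eps_t)_\# m_\eps(0);
\]
letting $L\to\infty$ gives the relation for all $t\in\R$. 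The only genuinely delicate point is, as stressed above, the global existence for all $x$, and it is precisely there that both the one-dimensional monotonicity of characteristics and the almost-everywhere global existence coming from the superposition principle are indispensable.
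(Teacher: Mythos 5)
Your proposal is correct and follows essentially the same route as the paper. Both proofs first invoke Ambrosio--Gigli--Savar\'e to obtain global existence of characteristics and the push-forward representation $m_\eps(t)=(X^\eps_t)_\#m_\eps(0)$ for $m_\eps(s)$-a.e.\ starting point (the paper cites Proposition~8.1.8, you appeal to the superposition principle of \S8.2 together with local Cauchy--Lipschitz uniqueness; these are two closely related entry points into the same machinery), then use strict positivity of $m_\eps$ to convert ``$m_\eps$-a.e.'' into ``Lebesgue-a.e.'', and finally exploit the one-dimensional ordering of characteristics to trap the solution through an arbitrary $x$ between two globally defined ones, yielding global existence for every initial datum. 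Your write-up is somewhat more explicit on two points that the paper passes over lightly: the verification that $v^\eps$ is locally Lipschitz in $x$ with $L^1_{\rm loc}$-in-time control (via the total-variation bound \eqref{tv} and the positive lower bound on $m_\eps$ on compacts), and the observation that one cannot rely on a linear-growth/Gronwall bound because $m_\eps$ has Gaussian tails, which is precisely why the a.e.-plus-monotonicity argument is needed. These additions are welcome but do not change the underlying strategy.
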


\begin{proof} All the assertions are proven in \cite[Proposition 8.1.8]{ags} in the a.e. sense, namely the ODE is shown to have a maximal solution for $m_\eps(s)$-a.e. initial datum $x$; since $m_\eps(s)$ is continuous and $\inf_{\R\times [-R,R]}m_\eps(t,x)>0$ for all $R>0$, this means that the ODE system has a unique global solution for a.e. $x$. Suppose now that $t \mapsto |X_t(x)|$ is unbounded for some $x$ as $t \to \bar t$. Since if $y < x < z$ we have $X_t(y) < X_t(x) < X_t(z)$ for all $t < \bar t$, $\liminf_{t \to \bar t} X_t(y) = -\infty$ for all $y < x$ or $\limsup_{t \to \bar t} X_t(z) = +\infty$ for all $z > x$, contradicting the fact that $X_t(\cdot)$ must be a.e. globally defined. Hence the ODE system has a unique global solution for all initial data.

\end{proof}

\begin{lemma}\label{glslemma2} Suppose that $m \in \cP \cap L^\infty(\R)$ satisfies $m > 0$ a.e. on $\R$. Let $R > 0$ and $\bx \in \R^N$ be such that
\[
\int_{-R}^{x^i} m(x) dx = \frac{M_R}{2N} + (i-1)\frac{M_R}N \qquad \text{for all $i = 1, \ldots, N$},
\]
where
\[
M_R := \int_{-R}^{R} m(x) dx \le 1.
\]
Then, $M_R m^N_\bx \to m\chi_{(-R,R)}$  narrowly as $N \to \infty$.
\end{lemma}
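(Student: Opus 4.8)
The plan is to identify the points $x^i$ as the quantiles of $m$ on the interval $(-R,R)$ and to read the tested integrals as midpoint Riemann sums, so that the convergence reduces to convergence of Riemann sums of a continuous function.

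First I would introduce $F(x) := \int_{-R}^x m(s)\,ds$. Since $m\ge 0$ lies in $L^1$, $F$ is absolutely continuous and nondecreasing; and since $m>0$ a.e., $F$ is in fact strictly increasing, with $F(-R)=0$ and $F(R)=M_R$, hence $F$ restricts to a homeomorphism of $[-R,R]$ onto $[0,M_R]$. Writing $y_i := \frac{M_R}{2N}+(i-1)\frac{M_R}{N} = \frac{(2i-1)M_R}{2N}\in(0,M_R)$, the defining relation for $\bx$ becomes $F(x^i)=y_i$, i.e. $x^i = F^{-1}(y_i)\in(-R,R)$. In particular every measure $M_R m^N_\bx$ is supported in the fixed compact set $[-R,R]$, and the convergence to be proven is, by definition of narrow convergence, that $\int_\R g\, d(M_R m^N_\bx) \to \int_{-R}^R g(x)m(x)\,dx$ for every $g\in C_b(\R)$.

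Then, for fixed $g\in C_b(\R)$, I would apply the change of variables $u=F(x)$ — legitimate because $F$ is absolutely continuous and strictly increasing, with $du=m(x)\,dx$ — to obtain $\int_{-R}^R g(x)m(x)\,dx = \int_0^{M_R}(g\circ F^{-1})(u)\,du$, whereas $\int_\R g\, d(M_R m^N_\bx) = \frac{M_R}{N}\sum_{i=1}^N g(x^i) = \frac{M_R}{N}\sum_{i=1}^N (g\circ F^{-1})(y_i)$. Since the $y_i$ are precisely the midpoints of the $N$ equal subintervals of length $M_R/N$ partitioning $[0,M_R]$, this last expression is exactly the midpoint Riemann sum of $g\circ F^{-1}$ on $[0,M_R]$; as $F^{-1}$ is continuous on $[0,M_R]$, the integrand $g\circ F^{-1}$ is continuous there, hence Riemann integrable, and the sums converge to $\int_0^{M_R}(g\circ F^{-1})(u)\,du$ as $N\to\infty$. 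Combining the two identities gives $\int_\R g\, d(M_R m^N_\bx)\to\int_{-R}^R g\,m\,dx$ for all $g\in C_b(\R)$, i.e. the asserted narrow convergence.

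The only delicate points are the two places where the hypothesis $m>0$ a.e. is genuinely used: it guarantees that $F$ is strictly increasing, so that $F^{-1}$ is single-valued and the $x^i$ are well defined at all, and it guarantees that $F^{-1}$ is continuous on $[0,M_R]$, which is what turns the midpoint sums into Riemann sums of a genuinely continuous integrand. Everything else is the change of variables $u=F(x)$ and elementary convergence of Riemann sums, with no essential dependence on $N$. A slightly more hands-on alternative, if one prefers to avoid $F^{-1}$, is to observe that $\int_{x^i}^{x^{i+1}} m\,dx = M_R/N$ together with the uniform continuity of $F^{-1}$ forces $\max_i(x^{i+1}-x^i)\to 0$, and then to compare $\frac{M_R}{N}\sum_i g(x^i)$ directly with $\int_{-R}^R g\,m\,dx$ using the uniform continuity of $g$ on $[-R,R]$; but the Riemann-sum formulation is the most transparent.
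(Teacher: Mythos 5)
Your proof is correct, and it takes a genuinely different route from the paper's. The paper fixes a \emph{Lipschitz} test function $\varphi$, decomposes $(-R,R)$ into the cells $(x^{i-1},x^i]$, uses the cell balance $\int_{x^{i-1}}^{x^i}m=M_R/N$ to rewrite the error on each cell as $\int_{x^{i-1}}^{x^i}[\varphi(x^i)-\varphi(x)]m(x)\,dx$, bounds this by $L(x^i-x^{i-1})M_R/N$ via the Lipschitz constant $L$, sums, handles the two boundary cells, and then removes the Lipschitz restriction by density of Lipschitz functions in $C_b$. You instead push the problem through the cumulative distribution function: you set $F(x)=\int_{-R}^x m$, observe that $m>0$ a.e.\ makes $F$ a homeomorphism of $[-R,R]$ onto $[0,M_R]$ with absolutely continuous inverse, use the change of variables $u=F(x)$ (legitimate because $F'(x)\,dx$ pushes forward to Lebesgue measure under $F$) to turn $\int_{-R}^R g\,m\,dx$ into $\int_0^{M_R}g\circ F^{-1}$, and recognize $\frac{M_R}{N}\sum_i g(x^i)$ as the midpoint Riemann sum of the continuous function $g\circ F^{-1}$, which converges for every $g\in C_b(\R)$ without any density step. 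The trade-off: your argument is conceptually cleaner and treats all bounded continuous test functions in one shot, whereas the paper's argument, at the cost of a density argument, delivers an explicit $O(1/N)$ rate of convergence against Lipschitz test functions (namely $\frac{2M_R}{N}(\|\varphi\|_\infty+RL)$), which is a quantitative piece of information the Riemann-sum approach does not directly produce. Both are complete and valid proofs.
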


\begin{proof}
First of all,   $\bx$ is uniquely determined by continuity and strict monotonicity of the function $x \mapsto \int_{-R}^{x} m$ on $[-R,R]$, whose range is $[0, M_R]$. Fix any $\varphi \in C_b(\R) \cap {\rm Lip}(\R)$, and denote by $L > 0$ its Lipschitz constant. 

Note that, since $\int_{x^{i-1}}^{x^i} m = \frac{M_R}{N}$,
\[
\int_{(x^{i-1}, {x^i}]} \varphi(x)(M_R m^N_\bx - m)(dx) =  \int_{x^{i-1}}^{x^i}[\varphi(x^i) - \varphi(x)]m(x)dx,
\]
hence
\[
\left| \int_{(x^{i-1}, {x^i}]} \varphi(x)(M_R m^N_\bx - m)(dx)  \right| \le(x^i-x^{i-1})\frac {LM_R}{N}.
\]
Then, since $\int_{-R}^{x^1} m+\int_{x^N}^Rm = \frac{M_R}{N}$,
\begin{multline*}
\left|\int_{(-R,R)} \varphi(x)(M_Rm^N_\bx - m)(dx)\right| \le \left| \int_{(-R, x^1]} \varphi(x)(M_Rm^N_\bx - m)(dx)\right| + \\ + \sum_{i=2}^N \left| \int_{(x^{i-1}, {x^i}]} \varphi(x)(M_Rm^N_\bx - m)(dx)  \right| + \left| \int_{(x^N, R)} \varphi(x)(M_R m^N_\bx - m)(dx) \right| \le \\
\frac{M_R}{N}|\varphi(x^1)| + \int_{-R}^{x^1} |\varphi| m (dx) + \sum_{i=2}^N \left( (x^i-x^{i-1})\frac {LM_R}{N} \right) + \int_{x^N}^{R} |\varphi| m (dx) \\
\le 2\frac{\|\varphi\|_\infty M_R}{N} + (x^N - x^1)\frac {LM_R}{N} \le  \frac{2M_R}{N} (\|\varphi\|_\infty +RL),
\end{multline*}
thus $\int_{\R} \varphi(x)(M_Rm^N_\bx - m\chi_{(-R,R)})(dx) \to 0$. Since Lipschitz functions are dense in the space of continuous functions on $(-R,R)$, a standard approximation argument yields the same convergence for all $\varphi \in C_b(\R)$, that is the desired claim on narrow convergence of $M_R m^N_\bx$ to $m\chi_{(-R,R)}$.
\end{proof}

 We are now ready to prove the $\Gamma$-convergence result.

\begin{proof}[Proof of Theorem \ref{gamma}]\ \ \\

$\bullet$ {\it $\Gamma-\liminf$ inequality:  Given  $\bx \in \mathcal{K}_T^{N}$ such that as $N\to +\infty$,  $m_{\bx}^N\to \mu$ narrowly  % in $C([0,T], \mathcal{P}_2(\R))$ 
and that  $ \int_{-T/2}^{T/2}\delta_t\otimes w^N_\bx(t)dt\to \zeta \in \mathcal M([-T/2, T/2] \times \R)$ narrowly,  we have  to prove that  $\mu(t,dx)= m(t)dx$, with $m\in C(\R,  \cP_{2}^{r})$,  $\zeta(dt,dx)= w(t,x)dt\otimes   dx$, and  $\liminf_N J_T^N(\bx)\geq J_T(m, w)$. }

Assume  that $\liminf_N J_T^N(\bx) < \infty$, otherwise the statement is trivial. 
We recall that 
\[
\frac{1}{N}\sum_{i=1}^N\int_{-T/2}^{T/2} |\dot{x}^i_t|^2 dt =  \int_{-T/2}^{T/2}\int_{\R}  \left|\frac{dw^N_\bx(t)}{dm_{\bx}^N(t)}\right|^2 m_{\bx}^N(t)(dx)dt,
 \] and moreover by Proposition \ref{proplimite} $  \int_{-T/2}^{T/2} \mathcal{I}(m^N_\bx(t))dt\leq C_K T$. So,  by Lemma \ref{lemmalsc}, we get 
\begin{align}\nonumber
\infty > \liminf_{N \to \infty}\left( J_T^N(\bx) - \int_{-T/2}^{T/2} \int_{\R^d} W(x)m_\bx^N(t)dxdt+\int_{-T/2}^{T/2} \mathcal{I}(m_\bx^T(t)) \, dt\right) \\ = \liminf_{N \to \infty} \frac{1}{2N}\sum_{i=1}^N\int_{-T/2}^{T/2} |\dot{x}^i_t|^2 dt=\liminf_N\int_0^T\int_{\R}  \left|\frac{dw^N_\bx(t)}{dm_{\bx}^N(t)}\right|^2 m_{\bx}^N(t)(dx)dt\geq \int_{[-T/2, T/2]\times\R}\left|\frac{d \zeta}{d \mu}\right|^2 d \mu. \label{limi0}
\end{align}
As a consequence, $\zeta$ is absolutely continuous with respect to $\mu$. Moreover, by Proposition \ref{convparticelle}, for all $t \in\R$, $\mu(t,dx) = m(t) dx$ for some $m(t) \in L^\infty(\R)$, and $\sup_t \|m(t)\|_{L^\infty(\R)} \le 1$. Hence  $\zeta = w dt \otimes  dx$ for some $\mu$-measurable function $w$. Thus
\[
\infty >\int_{[-T/2, T/2]\times\R}\left|\frac{d \zeta}{d \mu}\right|^2 d \mu = \int_{-T/2}^{T/2} \int_{\R^d} \frac{|w(t,x)|^2}{m(t,x)} dxdt \ge \int_{-T/2}^{T/2}  \int_{\R^d} |w(t,x)|^2 dxdt,
\]
namely $w \in L^2((-T/2, T/2) \times \R)$. Finally, it is straightforward to check that $-\partial_t m^N_{\bx} + {\rm div}(w^N_{\bx}) = 0$ in the sense of distributions, and the equation passes to the limit by narrow convergence, and that $T$-periodicity of  $m$ is directly inherited by analogous property of $\bx_t$. Thus,  $(m, w) \in \cK_T$.

By Proposition \ref{proplimite} and the Dominated Convergence Theorem, we get that \begin{equation}\label{limi1} \lim_{N \to \infty} \frac1{N^2}\int_{-T/2}^{T/2} \sum_{i \neq j}  K\big(|x^i_t-x^j_t|\big) dt=\int_{-T/2}^{T/2}  \mathcal{I}(m(t))dt,\end{equation} since  
$\frac1{N^2}\sum_{i \neq j}  K\big(|x^i_t-x^j_t|\big)=\mathcal{I}(m_{\bx}^N(t))\to \mathcal{I}(m(t))$ for all $t\in[0, T]$ and $$\sup_{N, t}  \frac1{N^2}\sum_{i \neq j}  K\big(|x^i_t-x^j_t|\big) = \sup_{N, t}  \cI(m^N_{\bx}(t)) \leq C_K.$$  
Moreover,  by Lemma \ref{lscW} and Fatou's lemma, we get that \begin{equation}\label{limi2}\liminf_{N \to \infty}\int_{-T/2}^{T/2} \int_{\R^d} W(x)m_{\bx}^N(t)(dx)dt\geq\int_{-T/2}^{T/2} \int_{\R^d} W(x)m(t,x)dxdt.\end{equation} 
So, by \eqref{limi0}, \eqref{limi1} and  \eqref{limi2}  we conclude 
\[\infty > \liminf_N J_T^N(\bx) \geq  J_T(m,w).\]
 
 \medskip

$\bullet$ {\it  $\Gamma-\limsup$ inequality.

Given $(m,w)\in \mathcal{K}_T$, we have to prove that there exists $\bx_t\in\cK_T^N $ such that as   $N\to +\infty$,  $m_{\bx }^N(\cdot)\to m(t,x)dx$  in $C_{\text{per}}(\R, \mathcal{P}_2(\R))$, $ \widetilde w^N_\bx = \int_0^T\delta_t\otimes w^N_{\bx}(t)dt\to  w dt\otimes dx $ narrowly and $\limsup_N J_T^N(\bx)\le  J_T(m,w)$. }

We divide the proof into three steps. 

{\bf Step 1: regularization and localization.} For all $\eps > 0$, consider the regularized couple $(m_\eps, w_\eps) \in \cK_T$ as in Lemma \ref{glslemma0}. Letting $v_\eps = w_\eps/m_\eps$, consider the ODE flow $X_t$ defined in Lemma \ref{glslemma1}, induced by the velocity field $v_\eps$. Since $m_\eps(t) = (X^\eps_t)_\# m_\eps(0)$ for all $t$,
\begin{multline*}
\int_{-T/2}^{T/2}  \int_{\R} \left|\frac{w_\eps(t,x)}{m_\eps(t,x)}\right|^2 m_\eps(t,x) \, dx dt = \int_{-T/2}^{T/2}  \int_{\R} \left|\frac{w_\eps(t,x)}{m_\eps(t,x)}\right|^2  (X^\eps_t)_\# m_\eps(0) (dx) \, dt = \\ \int_{-T/2}^{T/2} \int_{\R} |v_\eps(t, X_t^\eps(y))|^2 m_\eps(0) (dy) \, dt =  \int_{\R} G_\eps(y) m_\eps(0, y) dy,
\end{multline*}
where
\begin{equation}\label{Gdef}
G_\eps(y) := \int_{\R} |v_\eps(t, X^\eps_t(y))|^2dt.
\end{equation}
Note that $G_\eps(y)$ is a non-negative continuous function on $\R$. Therefore, by the monotone convergence theorem,
\begin{equation}\label{Rconv}
\lim_{R \to \infty} \int_{-R}^R G_\eps(y) m_\eps(0, y) dy= \int_{-T/2}^{T/2}  \int_{\R} \left|\frac{w_\eps(t,x)}{m_\eps(t,x)}\right|^2  m_\eps(t,x) \, dx dt .
\end{equation}

{\bf Step 2: construction of the $N$-agents system.} Let $\eps > 0$ be fixed, and  $\eta_\eps=\max_{\R\times \R} m_\eps<1$, in view of   Lemma \ref{glslemma0} (iii). 
Then there exists $R_\eps$ such that  for every $R>R_\eps$ there holds \[M_{R_\eps, \eps} := \int_{-R}^{R} m_\eps(0, x) dx>\eta_\eps.
\]
For any $N \in \mathbb N$, define $\bx_0^{N, R, \eps}$ as follows:
\[
\int_{-R}^{(x_0^{N, R, \eps})^i} m_\eps(0, x) dx = \frac{M_{R, \eps}}{2N} + (i-1)\frac{M_{R, \eps}}N \qquad \text{for all $i = 1, \ldots, N$}.
\]
%Note that, in view of \eqref{mepb}, for all $i = 1, \ldots, N-1$,
%\[
%(\bx_0^{N, R, \eps})^{i+1}- (\bx_0^{N, R, \eps})^i \ge \frac1{\eta_\eps} \int_{(\bx_0^{N, R, \eps})^i}^{(\bx_0^{N, R, \eps})^{i+1}} m_\eps(0, x) dx = \frac{M_{R, \eps}}{\eta_\eps N}.
%\]
Let us then define the $N$-agents competitor as the system of trajectories following the velocity field $v_\eps$ and having initial datum $\bx_0^{N, R, \eps}$ at $t = 0$, i.e.
\[
\bx_t^{N, R, \eps} := X^\eps_t (\bx_0^{N, R, \eps}).
\]
Note that  $X^\eps_t\Big(\left[(x_0^{N, R, \eps})^{i}, (x_0^{N, R, \eps})^{i + 1}\right]\Big) =\left [(x_t^{N, R, \eps})^{i}, (x_t^{N, R, \eps})^{i + 1}\right]$, for all $i = 1, \ldots, N-1$. So we get   for all $t$
\begin{multline}\label{mutdist}
(x_t^{N, R, \eps})^{i+1}- (x_t^{N, R, \eps})^i \ge \frac1{\eta_\eps} \int_{(x_t^{N, R, \eps})^i}^{(x_t^{N, R, \eps})^{i+1}} m_\eps(t, x) dx = \frac1{\eta_\eps} \int_{(x_t^{N, R, \eps})^i}^{(x_t^{N, R, \eps})^{i+1}} (X^\eps_t)_\# m_\eps(0) (dx) \\ = \frac1{\eta_\eps} \int_{(x_0^{N, R, \eps})^i}^{(x_0^{N, R, \eps})^{i+1}} m_\eps(0,y) (dy) 
= \frac{M_{R, \eps}}{\eta_\eps N}>\frac{1}{N}
\end{multline}
since $R>R_\eps$. 
Moreover, using the fact that $m_\eps$ is $T$-periodic, we get $T$-periodicity of $\bx^{N,R,\eps}$. Indeed, denoting for simplicity $\bx^{N,R, \eps}$ as $\bx$ and recalling that $m_\eps(t) = (X^\eps_t)_\# m_\eps(0)$ and that $X^\eps_t((-\infty, y)) = (-\infty, X^\eps_t(y))$ for all $y$,  we get
%,  $X^\eps_t((-\infty, y)) = (-\infty, X^\eps_t(y))$ for all $y$ (recall also the formula  
\[\int_{-\infty}^{x_t^i} m_\eps(t,x)dx=\int_{-\infty}^{x_{t+T}^i} m_\eps(t+T, x) dx =\int_{-\infty}^{x_{t+T}^i} m_\eps(t, x) dx \]
which by positivity of $m_\eps$ implies that $x^i_t=x^i_{t+T}$ for all $i$ and $t$. 

 So,  $\bx^{N, R, \eps} \in \cK^N_T$ whenever $R>R_\eps$.

{\bf Step 3: convergence.} First, for $  \eps, R$ fixed, consider the flow of empirical measures $m^{N,R,\eps} := m_{\bx^{N,R,\eps} }^N$ associated to $\bx^{N,R,\eps}$. It holds $m^{N,R,\eps}(t) = (X_t^\eps)_\# m^{N,R,\eps}(0)$. Since $\dot{\bx}_t^{N,R,\eps} = v_\eps(t, \bx_t^{N,R,\eps})$,
\[
\frac1N \sum_{i=1}^N \int_{-T/2}^{T/2}  |(\dot{x}_t^{N,R,\eps})^i|^2 dt = \int_{-T/2}^{T/2}  |v_\eps(t,x)|^2m^{N,R,\eps}(t)(dx) \, dt
= \int_{\R} G_\eps(y) m^{N,R,\eps}(0) (dy),
\]
where $G$ is defined in \eqref{Gdef}. By Lemma \ref{glslemma2}, $m^{N,R,\eps}(0) \to M_{R,\eps}^{-1} m_\eps(0) \chi_{(-R,R)}$  narrowly as $N \to \infty$, therefore
\[
\lim_{N \to \infty} \frac1N \sum_{i=1}^N \int_{-T/2}^{T/2}  |(\dot{x}_t^{N,R,\eps})^i|^2 dt = \frac1{M_{R,\eps}} \int_{-R}^R G_\eps(y) m_\eps(0, y) dy.
\]
Recalling \eqref{Rconv}, Lemma \ref{glslemma0} (ii), and the fact that $M_{R,\eps} \to 1$ as $R \to \infty$,
\begin{multline*}
\lim_{R \to \infty} \lim_{N \to \infty} \frac1N \sum_{i=1}^N \int_0^T |(\dot{x}_t^{N,R,\eps})^i|^2 dt  =\int_{-T/2}^{T/2}  \int_{\R} \left|\frac{w_\eps(t,x)}{m_\eps(t,x)}\right|^2m_\eps(t,x) \, dx dt \\ \le \int_{-T/2}^{T/2}  \int_{\R} \left|\frac{w(t,x)}{m(t,x)}\right|^2m(t,x) \, dx dt.
\end{multline*}

Next, for $\eps,R > 0$ fixed, $m^{N,R,\eps}(t)$ converges narrowly to $(X_t^\eps)_\# \big( M_{R,\eps}^{-1} m_\eps(0) \chi_{(-R,R)} \big)$ as $N \to \infty$ for all $t$, and such a convergence is actually in $\cP_2(\R)$ since supports are compact. Note that the sequence $\{m^{N,R,\eps}(\cdot)\}_N$ is equicontinuous by Remark \ref{remreg}, since by Lemma \ref{glslemma0}  there holds \begin{multline*}
d_2^2(m^{N,R,\eps}(t), m^{N,R,\eps}(s)) \le |t-s|\int_s^t \int_\R |v_\eps(\tau,y)|^2 m_\eps(\tau, y) d y d\tau \\ \le  |t-s| \int_0^T \int_{\R} \left|\frac{w(t,x)}{m(t,x)}\right|^2m(t,x) \, dx dt.
\end{multline*}
Therefore, we get that 
\begin{equation}\label{mNconv}
m^{N,R,\eps} \to (X_\cdot^\eps)_\# \big( M_{R,\eps}^{-1} m_\eps(0) \chi_{(-R,R)} \big) \qquad \text{in \quad $C(\R; \cP_2(\R))$ as $N \to \infty$}. 
\end{equation}
Similarly, for $\eps > 0$ fixed, $(X_t^\eps)_\# \big( M_{R,\eps}^{-1} m_\eps(0) \chi_{(-R,R)} \big) = M_{R,\eps}^{-1} m_\eps(t,x) \chi_{(X^\eps_t(-R), X^\eps_t(R))}(x) dx$ converges to $m_\eps(t,x) dx$ in $C(\R; \cP_2(\R))$ as $R \to \infty$. Indeed, convergence in $\cP_2(\R)$ for fixed $t$ holds by narrow convergence and convergence of second order moments, which can be easily verified, and equicontinuity in $t$ can be obtained as before, recalling Remark \ref{remreg}:
\begin{multline*}
d_2^2(M_{R,\eps}^{-1} m_\eps(t,x) \chi_{(X^\eps_t(-R), X^\eps_t(R))}(x) dx , M_{R,\eps}^{-1} m_\eps(s,x) \chi_{(X^\eps_s(-R), X^\eps_s(R))}(x) dx ) \le\\
\frac{ |t-s|}{M_{R,\eps}}\int_s^t \int_{X^\eps_\tau(-R)}^{X^\eps_\tau(R)}  |v_\eps(\tau,y)|^2 m_\eps(\tau, y) d y d\tau \le 2 |t-s| \int_{-T/2}^{T/2}  \int_\R \left|\frac{w(t,x)}{m(t,x)}\right|^2m(t,x) \, dx dt.
\end{multline*}
Furthermore, by Lemma \ref{glslemma0} (iv), $m_\eps(t,x) dx$ converges to $m(t,x) dx$ in $C(\R; \cP_2(\R))$ as $\eps \to 0$.

We now pass to the convergence of $ \widetilde w^{N,R,\eps}_\bx =\int_{-T/2}^{T/2} \delta_t\otimes w^N_{\bx^{N,R,\eps}}(t)dt$. First, for $\eps, R >0 $ fixed, note that $\widetilde w^{N,R,\eps}_\bx = v_\eps m^{N,R,\eps}$; in particular, $w^{N,R,\eps}_\bx$ is compactly supported on $[-T/2,T/2] \times \R$, uniformly in $N$. Hence, by \eqref{mNconv}, $w^{N,R,\eps}_\bx$ converges narrowly to $M_{R,\eps}^{-1} v_\eps m_\eps(t,x) \chi_{(X^\eps_t(-R), X^\eps_t(R))}(x) dx$. For fixed $\eps > 0$, such a measure converges narrowly to $v_\eps m_\eps dt \otimes dx$ as $R \to \infty$ by the Dominated Convergence Theorem. Finally, $w_\eps = v_\eps m_\eps$ converges to $w$ in $L^2((-T/2,T/2) \times \R)$.

It is then possible to choose sequences $\eps_N \to 0$, $R_N \to \infty$ such that $\bx^{N,R_N,\eps_N} \in \cK^N_T$, $m^{N,R_N,\eps_N}$ converges to $m(t,x) dx$ in $C(\R; \cP_2(\R))$, $T$-periodic, and $\widetilde w^{N,R_N,\eps_N}$ converges to $w  dt \otimes  dx$ as $N \to \infty$, and
\begin{equation}\label{kinlimsup}
\limsup_{N \to \infty} \frac1N \sum_{i=1}^N \int_{-T/2}^{T/2}  |(\dot{x}_t^{N,R_N,\eps_N})^i|^2 dt  \le \int_{-T/2}^{T/2} \int_{\R} \left|\frac{w(t,x)}{m(t,x)}\right|^2m(t,x) \, dx dt.
\end{equation}
 
Finally, convergence of $m^{N,R_N,\eps_N}$ to $m(t,x) dx$ in $C(\R; \cP_2(\R))$ guarantees convergence of $\iint W m^{N,R_N,\eps_N}$ to $ \iint W m $ and of $\int \cI(m^{N,R_N,\eps_N})$ to $\int \cI(m)$ in view of Lemma \ref{lscW}, Proposition \ref{proplimite} (ii) and the Dominated Convergence Theorem. This yields, together with \eqref{kinlimsup},
\begin{multline*}
\limsup_N J_T^N(\bx) = \limsup_N  \left(\frac1N \sum_{i=1}^N \int_{-T/2}^{T/2}  \frac{|(\dot{x}_t^{N,R_N,\eps_N})^i|^2}{2} dt  +\int_{-T/2}^{T/2}   W(x) m^N_\bx(t)) dt  -\int_{-T/2}^{T/2} \mathcal{I}(m_{\bx}^N(t)) dt  \right) \\
\le  \limsup_N  \left(\frac1{2N} \sum_{i=1}^N \int_{-T/2}^{T/2} |(\dot{x}_t^{N,R_N,\eps_N})^i|^2 dt \right) + \lim_N\left( \int_0^T  W(x) m^N_\bx(t)) dt  \right) -\lim_N\int_{-T/2}^{T/2} \mathcal{I}(m_{\bx}^N(t)) dt\\
\le \int_{-T/2}^{T/2}  \int_{\R} \frac12\left|\frac{w(t,x)}{m(t,x)}\right|^2m(t,x) \, dx dt +\int_{-T/2}^{T/2} \int_{\R}  W(x) m(t,x)dxdt  -\int_{-T/2}^{T/2} \mathcal{I}(m(t))dt =  J_T(m,w).
\end{multline*}

\end{proof}
\begin{remark}\upshape \label{remarksim} We observe that the same  $\Gamma$-convergence result of Theorem \ref{gamma} can be obtained by restricting the energies to the constraints $\cK_T^S$ and $\cK_T^{S,N}$. 

Indeed in the $\Gamma$-liminf inequality, we get that if $\bx\in \cK_T^{S,N}$ then $(m,w)\in \cK_T^S$.  Note that  symmetry of $m$ with respect to $T/4$ is directly inherited by analogous properties of $\bx_t$. In addition, for any test function $\varphi$,
\begin{multline*}
\int_\R \varphi(x) m^N_{\bx}(-t) (dx) = \frac1N \sum_{i=1}^{N} \varphi(x^i_{-t}) = \frac1N \sum_{i=1}^{N} \varphi(-x^{N+1-i}_{t}) \\ = \frac1N \sum_{j=1}^{N} \varphi(-x^{j}_{t}) = \int_\R \varphi(-x) m^N_{\bx}(t) (dx),
\end{multline*}
and therefore in the limit $m(-t) = \gamma m(t)$.

As for the $\Gamma$-limsup, it is easy to check that if $(m,w)\in \cK_T^S$, then the $N$-agents system  $\bx_t^{N,R,\eps}$ constructed in Step 2 satisfies the symmetry assumptions, and then $\bx_t^{N, R, \eps}\in \cK_T^{S,N}$.
Indeed  denoting for simplicity $\bx_t = \bx_t^{N, R, \eps}$, by the definition of $\bx_0$ and $m_\eps(0,x) = m_\eps(0,-x)$, we have that 
\begin{multline*}
\int_{-R}^{x_0^{N+1-i}} m_\eps(0, x) dx = \frac{M_{R, \eps}}{2N} + (N-i)\frac{M_{R, \eps}}N = M_{R, \eps} - \int_{-R}^{x_0^i} m_\eps(0, x) dx \\ = \int_{x_0^i}^R m_\eps(0, x) dx = \int^{-x_0^i}_{-R} m_\eps(0, x) dx,
\end{multline*}
therefore $x_0^{N+1-i} = -x_0^i$ by the positivity of $m_\eps(0)$ on $\R$. Hence, since $m_\eps(t,x) = m_\eps(-t,-x)$ and $X^\eps_t((-\infty, y)) = (-\infty, X^\eps_t(y))$ for all $y$ (recall also the formula $m_\eps(t) = (X^\eps_t)_\# m_\eps(0)$),
\begin{multline*}
\int_{-\infty}^{x_t^{N+1-i}} m_\eps(t, x) dx = \int_{-\infty}^{x_0^{N+1-i}} m_\eps(0, x) dx =\int_{-\infty}^{-x_0^{i}} m_\eps(0, x) dx   
= \int_{x_0^i}^\infty m_\eps(0, x) dx \\ = \int_{x_t^i}^\infty m_\eps(t, x)dx
 = \int^{-x_t^{i}}_{-\infty} m_\eps(t, -x) dx =  \int^{-x_{-t}^{i}}_{-\infty} m_\eps(t, x) dx,
\end{multline*}
and again by positivity of $m_\eps$ we conclude that $x_t^{N+1-i} = -x_{-t}^{i}$ for all $i$ and $t$. Analogous arguments based on the fact that $m_\eps( T/4 + t) = m_\eps( T/4 - t)$ for all $t$, provide $\bx_{T/4 - t} = \bx_{T/4 + t} $ for all $t$.

\end{remark} 
 We state now the  following coerciveness property of the functional $J_T^N$. 
  \begin{proposition}\label{compactness} Let  $\bx = \bx^N \in \mathcal{K}_T^N$ 
be such that $J_T^N(\bx)\leq C$ for some $C>0$ independent of $N$.  Then, up to passing to a subsequence, 
 as $N\to +\infty$,  $m_{\bx}^N\to \mu$  in $C_{\text{per}}(\R, \mathcal{P}_p(\R))$ for all $p<2$ and   $\widetilde w_\bx^N \to \zeta \in \mathcal M([-T/2,T/2] \times \R)$ narrowly, where $m_\bx^N$ and $\widetilde w_\bx^N$ are defined in \eqref{empirical}, \eqref{empiricalspeed}. Moreover  $\mu(t, dx)=m(t)dx$, $\zeta(dt,dx)= wdt\otimes dx$,  and  $(m,w)\in \cK_T$. 
 
If moreover, $\bx\in \cK_T^{S,N}$ for all $N$, then $(m,w)\in \cK_T^S$. 
  \end{proposition}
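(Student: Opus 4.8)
The result follows from the coerciveness of $J_T^N$ together with the $\Gamma-\liminf$ inequality of Theorem \ref{gamma}(i), so the plan is to first extract a suitable convergent subsequence by compactness and then invoke Theorem \ref{gamma} to identify the limit. For the compactness I would argue exactly as in the proof of Theorem \ref{thmNparticle}(i)--(ii): assumption \eqref{assw0} and Proposition \ref{proplimite} give, for every $\bx\in\cK_T^N$,
\[
J_T^N(\bx) \ge \frac1{2N}\sum_{i=1}^N\int_{-T/2}^{T/2}|\dot x^i_t|^2\,dt - C_W T + \frac{C_W^{-1}}{N}\sum_{i=1}^N\int_{-T/2}^{T/2}|x^i_t|^2\,dt - C_K T ,
\]
so $J_T^N(\bx)\le C$ yields a constant $A$, independent of $N$, with $\frac1N\sum_i\int_{-T/2}^{T/2}|\dot x^i_t|^2\,dt\le A$ and $\frac1N\sum_i\int_{-T/2}^{T/2}|x^i_t|^2\,dt\le A$.

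Next I would turn these into bounds uniform in $t$ and $N$. By \eqref{form1} and Remark \ref{remreg}, $d_2^2(m^N_\bx(t),m^N_\bx(s))\le A|t-s|$ for all $s,t$ and all $N$. Since $\bigl(\int_\R|x|^2 m^N_\bx(t)(dx)\bigr)^{1/2}=d_2(m^N_\bx(t),\delta_0)$, the triangle inequality gives
\[
\Bigl(\int_\R|x|^2 m^N_\bx(t)(dx)\Bigr)^{1/2}\le \sqrt{A\,T}+\Bigl(\int_\R|x|^2 m^N_\bx(s)(dx)\Bigr)^{1/2} ,
\]
and averaging in $s$ over $(-T/2,T/2)$, together with the integrated bound and Cauchy--Schwarz, produces $\sup_{N,t}\int_\R|x|^2 m^N_\bx(t)(dx)\le C_2$ with $C_2$ independent of $N,t$. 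By Remark \ref{remequiconv} and Lemma \ref{equiconv}, for every $p<2$ the set $\{m^N_\bx(t)\}_{N,t}$ is relatively compact in $\mathcal{P}_p(\R)$, while $d_p\le d_2$ makes $\{m^N_\bx(\cdot)\}_N$ equicontinuous in $C_{\text{per}}(\R,\mathcal{P}_p(\R))$; Arzel\`a--Ascoli and a diagonal argument over $p\uparrow 2$ then give a subsequence along which $m^N_\bx\to\mu$ in $C_{\text{per}}(\R,\mathcal{P}_p(\R))$ for all $p<2$, with $\mu$ $T$-periodic. For the momentum measures I would use \eqref{tv} and \eqref{form1}, which with Cauchy--Schwarz bound $|\widetilde w^N_\bx|([-T/2,T/2]\times\R)\le\sqrt{TA}$ uniformly in $N$; tightness follows from
\[
|\widetilde w^N_\bx|\bigl([-T/2,T/2]\times(\R\setminus B_R)\bigr)\le \frac{\sqrt{C_2}}{R}\int_{-T/2}^{T/2}\Bigl(\frac1N\sum_{i=1}^N|\dot x^i_t|^2\Bigr)^{1/2}dt\le \frac{\sqrt{C_2\,T\,A}}{R}\xrightarrow[R\to\infty]{}0 ,
\]
where I used Chebyshev ($\frac1N\#\{i:|x^i_t|>R\}\le C_2/R^2$) and Cauchy--Schwarz; uniformly bounded total variation plus tightness (Prokhorov applied to the positive and negative parts) give, after a further subsequence, $\widetilde w^N_\bx\to\zeta$ narrowly for some $\zeta\in\mathcal{M}([-T/2,T/2]\times\R)$.

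It then remains to identify $\mu$ and $\zeta$. Convergence of $m^N_\bx$ in $C_{\text{per}}(\R,\mathcal{P}_p(\R))$ implies in particular $m^N_\bx\to\mu$ in $C([-T/2,T/2],\mathcal{P}(\R))$; since $\liminf_N J_T^N(\bx)\le C<\infty$, the hypotheses of Theorem \ref{gamma}(i) are met and it gives $\mu(t,dx)=m(t)\,dx$ with $m\in C(\R,\cP_{2}^{r})$ $T$-periodic, $\zeta(dt,dx)=w(t,x)\,dt\otimes dx$, and $(m,w)\in\cK_T$. Finally, if $\bx\in\cK_T^{S,N}$ for every $N$, the reflection symmetry $m^N_\bx(-t)=\gamma_\# m^N_\bx(t)$ and the symmetry about $T/4$ pass to the limit exactly as spelled out in Remark \ref{remarksim} for the $\Gamma-\liminf$ inequality, so $(m,w)\in\cK_T^S$.

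The step I expect to be the main (and essentially only non-routine) obstacle is the passage from the time-integrated bound $\frac1N\sum_i\int_{-T/2}^{T/2}|x^i_t|^2\,dt\le A$ to the pointwise-in-$t$ second-moment bound $C_2$: this is where the $\tfrac12$-H\"older-in-time estimate of Remark \ref{remreg} is essential, and it is needed both to run Arzel\`a--Ascoli in $\mathcal{P}_p(\R)$ and to obtain tightness of $\widetilde w^N_\bx$. Everything else is a direct application of the already-established $\Gamma-\liminf$ inequality or a standard compactness argument.
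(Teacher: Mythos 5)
Your proof is correct, and the compactness core is the same as the paper's: coercivity bound from \eqref{assw0} and Proposition \ref{proplimite}, equicontinuity in $d_2$ from Remark \ref{remreg}, upgrade from an integrated to a pointwise-in-$t$ second-moment bound, Ascoli--Arzel\`a in $\cP_p$ for $p<2$, and a total-variation bound from \eqref{tv} for the momentum measures. Where you differ from the paper is in the identification step: you delegate it to the already-proved $\Gamma$-$\liminf$ inequality, Theorem \ref{gamma}(i), which in the paper is stated so that it delivers $\mu(t)=m(t)\,dx$ with $m\in C(\R,\cP_2^r)$, $\zeta=w\,dt\otimes dx$ and $(m,w)\in\cK_T$ as part of its conclusion; the paper's own proof of Proposition \ref{compactness} instead repeats those arguments directly (Proposition \ref{convparticelle} for the $L^\infty$ density, a Cauchy--Schwarz duality estimate showing $\zeta\ll dt\otimes m\,dx$, and passage of the continuity equation to the limit). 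Both routes are valid, and yours avoids redundancy. There is also no circularity, since the proof of Theorem \ref{gamma}(i) does not rely on Proposition \ref{compactness}. Finally, you add a Chebyshev-plus-Cauchy--Schwarz tightness estimate for $\widetilde w^N_\bx$ before invoking Prokhorov; the paper's proof simply extracts a narrowly convergent subsequence from the total-variation bound without noting this tightness explicitly, so your version is if anything a bit more careful.
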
 
  \begin{proof}   
  Since $J_T^N(\bx)\leq C$, we get  that  $\frac1N \sum_{i=1}^N \int_{-T/2}^{T/2} \frac{|\dot{x}^i_t|^2}{2} dt +\frac1N \sum_{i=1}^N \int_{-T/2}^{T/2}  W(x^i_t) dt\leq C+C_K$, recalling Proposition \ref{proplimite}, where $C_K$ is a constant independent of $N$ and $\bx$.  Therefore, by Remark \ref{remreg}, $m^N(t)$ are equicontinuous, uniformly in $N$, and moreover by the properties of $W$, 
  $\int_{-T/2}^{T/2}  \int_{\R}|x|^2m^N_{\bx}(t)dxdt\leq C$, for some constant $C$ independent of $N$. Using the equicontinuity, we deduce that there exists $C_T>0$ independent of $N$ such that 
 $  \int_{\R}|x|^2m^N_{\bx} (t)dx\leq C_T$ for all $t\in [-T/2, T/2]$. So, up to passing to a subsequence, recalling Lemma \ref{equiconv} and Ascoli Arzel\`a theorem, we get that $m^N_{\bx}\to \mu$ in $C([-T/2, T/2], \cP_p(\R))$ for all $p<2$ and moreover $T$-periodicity passes to the limit. By lower semicontinuity of the potential energy recalled in Lemma \ref{lscW} and assumption \eqref{assw0}, we get that $\mu\in C(\R, \cP_2(\R))$.
 By Proposition \ref{convparticelle}, we get that $\mu$ has a density $m$, that is $\mu(t,dx)= m(t,x)dx$ and moreover $\|m\|_\infty\leq 1$. So, $\mu\in \cP_{2}^r$.

Recalling \eqref{form1}, we have that
\[ \int_{-T/2}^{T/2} \int_\R \left|\frac{dw^N_\bx(t)}{dm_{\bx}^N(t)}\right|^2 m_{\bx}^N(t)(dx)dt =\frac{1}{N}\sum_{i=1}^N\int_{-T/2}^{T/2}  |\dot{x}^i_t|^2 dt \leq C+C_K.\]
Now, arguing  as in \eqref{tv},  
that the total variation of $|\tilde w_\bx^N|$ is bounded by  $(T(C+C_K))^{1/2}$.  
So,  extracting a further subsequence, we can assume that $\tilde w_\bx^N$  converges narrowly to some
  measure $\zeta$ on  $[-T/2,T/2]\times \R$. 
Again arguing as in \eqref{tv},  for every $\phi\in C^\infty_c([-T/2, T/2]\times \R, \R)$ there holds
\begin{align*}& \int_{-T/2}^{T/2}  \int_\R \phi(t,x) \zeta (dt, dx) =\lim_N  \int_{-T/2}^{T/2} \int_\R \phi(t,x) \frac{dw^N_\bx(t)}{dm_{\bx}^N(t)} m_{\bx}^N(t)(dx)dt\\
\leq & \lim_N  \left(\int_{-T/2}^{T/2} \int_\R  \phi^2(t,x)  m_{\bx}^N(t)(dx)dt \right)^{1/2} \left( \int_{-T/2}^{T/2} \int_\R  \left|\frac{dw^N_\bx(t)}{dm_{\bx}^N(t)}\right|^2 m_{\bx}^N(t)(dx)dt\right)^{1/2} \\
\leq &  \left(\int_{-T/2}^{T/2} \int_\R  \phi^2(t,x)  m(t)dxdt \right)^{1/2} \left(C_K+C\right)^{1/2}   \end{align*} which implies that $\zeta $ is absolutely continuous with respect to $dt\otimes m dx$. So $\zeta= w dt\otimes  dx$ for some $\mu$-measurable function $w$.
 Finally, it is easy to check that $(m,w)$ satisfy the continuity equation.  \end{proof}
Finally as  a corollary of the $\Gamma$-convergence result Theorem \ref{gamma}, of the compactness result  proved in Proposition \ref{compactness}, and by the fact that minimizers of $J_T^N$ have uniformly bounded support, as proved in Theorem \ref{thmNparticle} (iii),  we get the following result. 
\begin{theorem}\label{conv} Let $\bx = \bx^N \in \mathcal{K}_T^N$ (resp.  $\bx \in \mathcal{K}_T^{N,S}$) be a minimizer of  $J_T^N$ in $\cK_T^N$ (resp. in $\cK_T^{S,N}$). Then, every limit point $(m,w)$ of $\bx$ (in the sense of Proposition \ref{compactness}) is a minimizer of $J_T$ in $\cK_T$ (resp. in $\cK_T^S$). 
 
\end{theorem}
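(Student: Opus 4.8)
The plan is to run the classical ``fundamental theorem of $\Gamma$-convergence'': equicoercivity together with the $\Gamma$-convergence of Theorem \ref{gamma} forces every limit point of a sequence of minimizers to be a minimizer of the limit functional. All the needed ingredients are already available (Theorem \ref{gamma}, Proposition \ref{compactness}, Theorem \ref{thmNparticle} (iii), Remark \ref{remarksim}), and the only genuine verification is that the minimal energies $\inf_{\cK_T^N} J_T^N$ stay bounded uniformly in $N$, so that Proposition \ref{compactness} applies to the given minimizers $\bx^N$.

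First I would exhibit one finite-energy competitor of the continuous problem, for instance $(\bar m,\bar w)\in\cK_T^S\subset\cK_T$ with $\bar m(t)\equiv\chi_{(-1/2,1/2)}$ and $\bar w\equiv 0$, for which $J_T(\bar m,\bar w)=T\,\cW(\bar m)<\infty$ by \eqref{stat}, \eqref{int} and \eqref{assw0}. By the $\Gamma$-$\limsup$ inequality (Theorem \ref{gamma} (ii), together with Remark \ref{remarksim} in the symmetric case) there is ${\bf y}^N\in\cK_T^N$ (resp.\ ${\bf y}^N\in\cK_T^{N,S}$) with $\limsup_N J_T^N({\bf y}^N)\le J_T(\bar m,\bar w)$. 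Since $\bx^N$ minimizes $J_T^N$, one has $J_T^N(\bx^N)\le J_T^N({\bf y}^N)$, whence $\limsup_N J_T^N(\bx^N)\le J_T(\bar m,\bar w)<\infty$; combined with the lower bound $J_T^N\ge-(C_W+C_K)T$ coming from \eqref{assw0} and Proposition \ref{proplimite}, this gives $\sup_N J_T^N(\bx^N)\le C$ for some $C$ independent of $N$, after discarding finitely many indices. Thus Proposition \ref{compactness} is applicable: along any subsequence $N_k$ realizing a limit point $(m,w)$ one has $m^{N_k}_\bx\to m\,dx$ in $C(\R,\cP_p(\R))$ for every $p<2$ and $\widetilde w^{N_k}_\bx\to w\,dt\otimes dx$ narrowly, with $(m,w)\in\cK_T$ (resp.\ $(m,w)\in\cK_T^S$). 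Moreover, by Theorem \ref{thmNparticle} (iii) the supports of the $m^{N_k}_\bx(t)$ are all contained in $[-R_0-1,R_0+1]$, so the second moments are uniformly integrable and the convergence is in fact in $C(\R,\cP_2(\R))$, the limit $m$ inheriting the same support bound.

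Next, the $\Gamma$-$\liminf$ inequality of Theorem \ref{gamma} (i), applied along the subsequence $N_k$ (and valid under the narrow convergence just obtained), yields $J_T(m,w)\le\liminf_k J_T^{N_k}(\bx^{N_k})$. To conclude minimality, I would pick an arbitrary competitor $(m',w')\in\cK_T$ (resp.\ $\cK_T^S$): if $J_T(m',w')=+\infty$ there is nothing to prove, otherwise Theorem \ref{gamma} (ii) (and Remark \ref{remarksim}) provides ${\bf z}^N\in\cK_T^N$ (resp.\ $\cK_T^{N,S}$) with $\limsup_N J_T^N({\bf z}^N)\le J_T(m',w')$, and minimality of $\bx^N$ gives $J_T^{N_k}(\bx^{N_k})\le J_T^{N_k}({\bf z}^{N_k})$ for every $k$. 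Hence
\[
J_T(m,w)\le\liminf_k J_T^{N_k}(\bx^{N_k})\le\limsup_k J_T^{N_k}({\bf z}^{N_k})\le J_T(m',w').
\]
Since $(m',w')$ was arbitrary, $(m,w)$ minimizes $J_T$ over $\cK_T$ (resp.\ over $\cK_T^S$), which is the assertion.

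I expect no real obstacle: the whole argument is routine once Theorem \ref{gamma}, Proposition \ref{compactness} and Theorem \ref{thmNparticle} (iii) are in hand. The only two points deserving a moment of care are the equicoercivity estimate $\sup_N J_T^N(\bx^N)\le C$, which must be in force before invoking Proposition \ref{compactness} and is obtained by comparison with a recovery sequence of a fixed finite-energy continuous competitor, and --- in the symmetric case --- checking that the recovery sequences furnished by Remark \ref{remarksim} are genuinely admissible, i.e.\ lie in $\cK_T^{N,S}$, so that they may be compared with the symmetric minimizers.
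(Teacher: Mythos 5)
Your proof is correct and follows essentially the same route as the paper: apply Proposition \ref{compactness} to obtain a limit point $(m,w)\in\cK_T$, upgrade the convergence to $C(\R,\cP_2(\R))$ using the uniform support bound from Theorem \ref{thmNparticle} (iii), then combine the $\Gamma$-$\liminf$ and $\Gamma$-$\limsup$ inequalities of Theorem \ref{gamma} with the minimality of $\bx^N$. Two small refinements in your version are worth pointing out. First, you explicitly establish the uniform energy bound $\sup_N J_T^N(\bx^N)\le C$ by comparing with a recovery sequence for a fixed finite-energy continuous competitor; the paper leaves this implicit (the phrase ``in the sense of Proposition \ref{compactness}'' already presupposes the bound), but your verification is cleaner since it shows the hypothesis of Proposition \ref{compactness} is nonvacuous. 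Second, you compare $(m,w)$ against an \emph{arbitrary} competitor $(m',w')\in\cK_T$ rather than against a minimizer $(\bar m,\bar w)$ of $J_T$, which sidesteps the need to first establish that a minimizer of $J_T$ in $\cK_T$ exists; the paper invokes such a minimizer directly, which is fine here because existence is known from \cite{ccbrake} (Theorem \ref{mfgex}), but your phrasing is self-contained and slightly more robust.
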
  
\begin{proof}  
By Proposition \ref{compactness}, if $(m,w)$ is a limit point of $\bx\in \cK_T^N$, then $(m,w)\in \cK_T$. 

Moreover, if $\bx$ is a minimizer of $J_T^N$, then by Theorem \ref{thmNparticle} (iii),  
$|x^i_t| \le R_0 + 1$ for all $t$ and $i = 1, \ldots N$ 
 where $R_0$ is as in \eqref{assw0}.  This implies that for all $t$  the support of $m_{\bx}^N(t)$ is contained   in $[-R_0-1, R_0+1]$. Therefore, convergence in  $C_{\text{per}}(\R, \mathcal{P}_p(\R))$ for some $p\ge1$ is equivalent to convergence in   $C_{\text{per}}(\R, \cP_2(\R))$.   Then by Proposition \ref{compactness},  $m_{\bx}^N\to \mu$  in $C_{\text{per}}(\R, \mathcal{P}_2(\R))$. 

Moreover by the $\Gamma$-liminf inequality in  Theorem \ref{gamma}, there holds that 
$\liminf_N J_T^N(\bx)\geq J_T(m,w)$. Let $(\bar m, \bar w)\in \cK_T$ be a minimizer of $J_T$, then by the $\Gamma$-limsup inequality in  Theorem \ref{gamma}, there exists $\overline\bx\in \cK_T^N$ such that $\limsup_N J_T^N(\overline\bx)\leq  J_T(\overline m, \overline w)$. 

So, we conclude, by minimality of $\bx$ and of $(\bar m, \bar w)$, that  $J_T(\overline m, \overline w)\geq \liminf_N J_T^N(\bar \bx)\geq \liminf_N J_T^N(\bx)\geq J_T(m,w)\geq J_T(\overline m, \overline w)$, which implies that $(m,w)$ is a minimizer of $J_T$. 
 
\end{proof}

  \section{Brake orbits of the mean-field problem in dimension $1$}\label{sectionfinal} 
Using the convergence of minimizers of the discrete problem to minimizers of the mean-field one, we explain in this section how to deduce some qualitative properties of brake orbits in the continuous setting. We will show in particular that  brake orbits which minimize the functional $J_T$ share the same properties as equilibria of the system: they have  compact support, independent of the period, and they are characteristic functions of appropriate intervals (with time dependent extremes).  

\begin{corollary}\label{finalcor} Let $d=1$ and assume \eqref{assK}, \eqref{pos}, \eqref{assw0}, \eqref{ref}, and \eqref{assw1}. 
Then there exists $(m_T,w_T)\in \cK_T^{S}$  minimizing $J_T$  in $\cK_T^S$ 
such that 
\[m_T(t)=\chi_{(a_T(t), a_T(t)+1)}\qquad w(t)= -\dot a_T(t)\chi_{(a_T(t), a_T(t)+1)}\] 
where $a_T:\R\to [-R_0-1, R_0]$  with $R_0$ as in \eqref{assw0}, is  a $T$- periodic $C^2$ function  which is  a minimizer of the energy 
\[x \mapsto \int_{-T/2}^{T/2} \frac{|\dot x_t|^2}{2}dt+\int_{-T/2}^{T/2}\int_{x_t}^{x_t+1} W(s)ds dt\] 
among $T$ periodic curves $x$ such that $x_{t+\frac{T}4}=x_{\frac{T}4-t}$,  and $x_t=-x_{-t}-1$.
In particular, there holds \[ a_T''(t)=W(a_T(t)+1)-W(a_T(t)),\]
and $a_T(0)=-\frac{1}{2}=a_T\left(\frac{T}{2}\right)$, $\dot a_T\left(\pm \frac{T}4\right)=0$.
Finally, \[ \lim_{T\to +\infty}  \int_{a_T(\pm T/4)}^{a_T(\pm T/4)+1} W(s)ds =0.\]  
  \end{corollary}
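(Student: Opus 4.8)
The plan is to reduce the constrained minimization of $J_T$ over $\cK_T^S$ to a one-dimensional variational problem, and then to use the discrete approximation of Section~\ref{sectiongamma} to show that the reduced minimum coincides with the global one. First I would restrict attention to pairs $(m,w)$ of the form $m(t)=\chi_{(a(t),a(t)+1)}$, $w(t)=-\dot a(t)\,\chi_{(a(t),a(t)+1)}$, with $a\in W^{1,2}_{\rm per}(\R;\R)$ subject to $a(-t)=-a(t)-1$ and $a(T/4+t)=a(T/4-t)$: a direct computation shows that such pairs solve the continuity equation, belong to $\cK_T^S$, and conversely exhaust all elements of $\cK_T^S$ whose first marginal is the indicator of a unit interval (the momentum being forced to be the rigid translation $-\dot a(t)$). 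Since $\cI(\chi_{(a,a+1)})=\cI(\chi_{(0,1)})=:\kappa$ is independent of $a$ by translation invariance, on this subclass $J_T=\mathcal J^1_T(a)-\kappa T$, where $\mathcal J^1_T$ is exactly the functional in the statement. As $\Phi(x):=\int_x^{x+1}W\ge c|x|^2-C$ by \eqref{assw0}, the functional $\mathcal J^1_T$ is coercive on the admissible class and weakly lower semicontinuous, so it has a minimizer $a_T$ by the direct method; and since $\Phi'(x)=W(x+1)-W(x)$ is positive for $x>R_0$ and negative for $x<-R_0-1$ (again by \eqref{assw0}), a truncation argument, compatible with the symmetry constraints, shows that every minimizer of $\mathcal J^1_T$ takes values in $[-R_0-1,R_0]$.

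The next step is to show $\min_{\cK_T^S}J_T=\min_a\mathcal J^1_T(a)-\kappa T$. For this, by Theorem~\ref{thmNparticle} there is a minimizer $\bx^N$ of $J_T^N$ on $\cK_T^{N,S}$, whose minimal value is bounded uniformly in $N$ (e.g. by evaluating $J_T^N$ on a fixed evenly-spaced stationary configuration); by Theorem~\ref{propositionsaturation}, which uses \eqref{coew2}, it saturates the distance constraint, hence $x^i_t=x^1_t+\tfrac{i-1}{N}$ and $\dot x^i_t=\dot x^1_t$ for all $i$. By Theorem~\ref{thmNparticle}~(iii) the support of $m^N_{\bx^N}(t)$ stays in $[-R_0-1,R_0+1]$ uniformly in $N,t$, so Proposition~\ref{compactness} and Remark~\ref{rem1dconvparticelle} yield, along a subsequence, convergence of $m^N_{\bx^N}(\cdot)$ in $C(\R;\cP_2(\R))$ to a flow of the form $\chi_{(a(t),a(t)+1)}$, with momentum $-\dot a(t)\,\chi_{(a(t),a(t)+1)}$. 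By Theorem~\ref{conv} this limit minimizes $J_T$ over $\cK_T^S$, and being an element of the subclass above it proves the identity; hence the pair $(m_T,w_T)$ with $m_T(t)=\chi_{(a_T(t),a_T(t)+1)}$, $w_T(t)=-\dot a_T(t)\chi_{(a_T(t),a_T(t)+1)}$, built from the minimizer $a_T$ of $\mathcal J^1_T$, minimizes $J_T$ over $\cK_T^S$.

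For the Euler--Lagrange equation and regularity I would argue exactly as in Remark~\ref{remenergia}: decomposing a perturbation into its parts symmetric and antisymmetric about $T/4$ and using that $W$ is even, so the antisymmetric part gives no first-order contribution, $a_T$ is a $T$-periodic weak solution of $\ddot a_T=W(a_T+1)-W(a_T)$; since $W\in C^1$ the right-hand side is continuous, so $a_T\in C^2$. Evaluating $a_T(-t)=-a_T(t)-1$ at $t=0$ and (by periodicity) at $t=T/2$ gives $a_T(0)=a_T(T/2)=-\tfrac12$; combining the two symmetry relations shows $a_T$ is even also about $-T/4$, so $\dot a_T(\pm T/4)=0$.

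Finally, for the asymptotics as $T\to\infty$, I would first produce a $T$-independent upper bound $\mathcal J^1_T(a_T)\le C_1$: by \eqref{assw1} there is $\bar a$ with $(\bar a,\bar a+1)\subset B(a^+,r_0)$, hence $\Phi(\bar a)=0$, and by \eqref{ref} also $\Phi(-\bar a-1)=0$; the competitor that joins $-\tfrac12$ to $\bar a$ by a fixed profile on $[0,1]$, stays at $\bar a$ on $[1,T/2-1]$, mirrors this on $[T/2-1,T/2]$, is symmetrized about $T/4$ and extended by $a(-t)=-a(t)-1$, is admissible and has $\mathcal J^1_T$-value independent of $T$. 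Thus $\int_{-T/2}^{T/2}\Phi(a_T)\le\mathcal J^1_T(a_T)\le C_1$. On the other hand, multiplying the Euler--Lagrange equation by $\dot a_T$ gives the conservation law $\tfrac12|\dot a_T(t)|^2-\Phi(a_T(t))\equiv-c_T$, with $c_T:=\Phi(a_T(\pm T/4))\ge0$ by $\dot a_T(\pm T/4)=0$; hence $\Phi(a_T(t))\ge c_T$ for all $t$, so $c_T\,T\le\int_{-T/2}^{T/2}\Phi(a_T)\le C_1$, i.e. $\int_{a_T(\pm T/4)}^{a_T(\pm T/4)+1}W=c_T\le C_1/T\to0$. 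I expect the main obstacle to be the second step: the reduction is elementary, but proving that the \emph{global} minimizer of $J_T$ over $\cK_T^S$ can be taken to be an interval indicator genuinely requires the strong-aggregation assumption \eqref{coew2}, via the discrete saturation of Theorem~\ref{propositionsaturation} together with the $\Gamma$-convergence of minimizers (Theorem~\ref{conv}); without \eqref{coew2} the conclusion fails, as Remark~\ref{nonsat} shows.
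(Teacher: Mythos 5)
Your proof is correct and follows the same backbone as the paper: saturation of the distance constraint in the discrete model (Theorem \ref{propositionsaturation}), together with the $\Gamma$-convergence machinery (Proposition \ref{compactness}, Remark \ref{rem1dconvparticelle}, Theorem \ref{conv}), forces the limiting minimizer to be an evolving indicator of a unit interval, after which the problem reduces to a scalar variational problem solved by the arguments of Remark \ref{remenergia}. You are also right that \eqref{coew2} is genuinely used here even though the Corollary's hypotheses only list \eqref{assK}, \eqref{pos}, \eqref{assw0}, \eqref{ref}, \eqref{assw1}: the paper's proof invokes Theorem \ref{propositionsaturation}, which requires it.

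The one place where you take a genuinely different route is the asymptotics as $T\to\infty$. The paper obtains $\int_{a_T(\pm T/4)}^{a_T(\pm T/4)+1}W\to 0$ by appealing to Theorem \ref{mfgex}(ii), i.e.\ the convergence $d_2\bigl(m^T(\pm T/4),\mathcal M^\pm\bigr)\to 0$ borrowed from \cite{ccbrake}, and then identifying $\mathcal M^\pm$ explicitly in dimension one. You instead use the first integral of the ODE $\ddot a_T=\Phi'(a_T)$ with $\Phi(x)=\int_x^{x+1}W$: since $\tfrac12|\dot a_T|^2-\Phi(a_T)\equiv -c_T$ with $c_T=\Phi(a_T(\pm T/4))\ge 0$, one gets $\Phi(a_T)\ge c_T$ everywhere, and a $T$-independent energy bound (built from the plateau competitor permitted by \eqref{assw1} and \eqref{ref}) gives $c_T\le C_1/T\to 0$. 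This is more elementary and self-contained, and does not require the full weight of Theorem \ref{mfgex}; the paper's route has the advantage of connecting the brake orbit to the structure of $\mathcal M^\pm$, but for the stated conclusion your conservation-law argument is a clean shortcut. Both use \eqref{assw1} in the same essential way (to build the zero-potential competitor, resp.\ to define $\mathcal M^\pm$). There is also a minor organizational difference: you first prove existence of a minimizer $a_T$ of the reduced functional by the direct method, and then show $\min J_T=\min\mathcal J^1_T-\kappa T$ via the discrete limit; the paper identifies $a_T$ directly as the limit of the saturated discrete trajectories without a separate existence step. Either order is fine.
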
 
\begin{proof} 
Assume that the minimizer  $(m,w)\in \cK_T^{S}$ is a limit point of $\bx = \bx^N \in \mathcal{K}_T^{N,S}$, which in turn minimizes $J_T^N$ in $\cK_T^{S,N}$ (as defined in Proposition \ref{compactness} and Theorem \ref{conv}).  By Theorem \ref{thmNparticle} (iii) and Theorem \ref{propositionsaturation}, we have $x_t^i=x_t^1+\frac{i-1}{N}$   and $x_t^1 \in [-R_0-1, R_0]$ for all $t, i$.  Moreover, since $m_{\bx}^N\to m$ narrowly, by Proposition \ref{convparticelle} and Remark \ref{rem1dconvparticelle}, $m(t)=\chi_{[a_T(t), a_T(t)+1]}$ where $a_T(t)=\lim_N x_t^1$ up to subsequence. In particular $a_T(0)= -\frac{1}{2}$ since $x_0^1=-\frac{N-1}{2N}$ by Remark \ref{remenergia}, $a$ is $T$-periodic and satisfies $a_T\left(t+\frac{T}4\right)=a_T\left(\frac{T}4-t\right)$ and $a_T(t)=-a_T(-t)-1$. 

By  Theorem \ref{conv},  recalling Proposition \ref{proplimite} and Lemma \ref{lscW}, there holds that 
\[  \lim_N \int_{-T/2}^{T/2} \int_{\R}W(x)m_{\bx}^N(t)dxdt= \int_{-T/2}^{T/2}\overline W(a_T(t))dt\quad \text{and}\quad  \lim_N\int_{-T/2}^{T/2} \mathcal{I}(m_{\bx}^N(t))dt= T\mathcal{I}(\chi_{[0,1]}) \] where $\overline W(s)=\int_s^{s+1} W(u)du$  
and moreover $\lim_N J_T^N(\bx)= J_T(m,w)$. Moreover, by Remark \ref{remenergia}, up to passing to a subsequence, we may assume that $x_t^1\to a_T(t)$ uniformly in $C^1$ as $N\to +\infty$.  Therefore, there holds
\begin{equation}\label{limxdot}
 \lim_N \int_{-T/2}^{T/2} \frac{|\dot x^1_t|^2}{2}dt=  \int_{-T/2}^{T/2} \frac{|\dot a_T(t)|^2}{2}dt= \int_{-T/2}^{T/2}\int_{\R} \frac{|w|^2}{2m}dxdt=\int_{-T/2}^{T/2}\int_{a_T(t)}^{a_T(t)+1} \frac{|w|^2}{2}dx dt.
 \end{equation}
Using the fact that $(m,w)$ is a distributional solution to the continuity equation we get, for all $T$-periodic $\phi\in C^{\infty}(\R\times \R)$ and with compact support in $x$
\begin{align*} 0=&\int_{-T/2}^{T/2}\frac{d}{dt}\int_{\R} \phi(t,x)m(t,x)dxdt=\int_{-T/2}^{T/2}\frac{d}{dt}\int_{a_T(t)}^{a_T(t)+1} \phi(t,x)dxdt\\
=&\int_{-T/2}^{T/2}\int_{a_T(t)}^{a_T(t)+1} \phi_t(t,x)dxdt +\int_{-T/2}^{T/2}\dot a_T(t) (\phi(t, a_T(t)+1)-\phi(t, a_T(t)) dt \\
=&\int_{-T/2}^{T/2}\int_{\R} w(t,x) \phi_x(t,x)dxdt +\int_{-T/2}^{T/2} \int_{\R} \dot a_T(t)  \chi_{[a_T(t), a_T(t)+1]}\phi_x(t,x) dx dt. \end{align*}
Therefore, by the fundamental lemma of calculus of variations,  we conclude that $w(t,x)=-  \dot a_T(t)  \chi_{[a_T(t), a_T(t)+1]}+c$, and by the equality in \eqref{limxdot}, we get $c=0$. 

Therefore $J_T(m,w)=  \int_{-T/2}^{T/2} \frac{|\dot a_T(t)|^2}{2}dt+\int_{-T/2}^{T/2}\overline W(a_T(t))dt-T\mathcal{I}(\chi_{[0,1]}) $  and moreover, reasoning as in Remark \ref{remenergia}  
and using the minimality of $(m,w)$, we conclude that $a_T(t)$ is a minimizer of $\int_{-T/2}^{T/2} \frac{|\dot x_t|^2}{2}+\overline W(x_t)dt$ 
 among $T$ periodic curves $x_t$ such that $x_{t+\frac{T}4}=x_{\frac{T}4-t}$,  and $x_t=-x_{-t}-1$. In particular we get that $a''_T(t)=\overline W'(a_T(t))$. 
% So  $ \frac{a_T'(t)^2}{2}- \overline W(a_T(t))=c$ and then, recalling that $a'_T(\pm T/4)=0$, there holds  $ \frac{a_T'(t)^2}{2}- \overline W(a_T(t))=- \overline W(a_T(\pm T/4))$.

 Finally, note that in dimension $1$, recalling assumption \eqref{assw0}, $\mathcal{M}^+=\{\chi_{(r,r+1)} \ : r\in [a^+-r_0,a^+ + r_0-1]\}$ and $\mathcal{M}^-= \{\chi_{(r,r+1)} \ : r\in [-a^+-r_0,-a^+ + r_0-1]\}$ (note that by \eqref{assw1}, $2r_0\geq 1$).  Moreover $d_2(\chi_{(a_T(t), a_T(t)+1)}, \mathcal{M}^\pm )=\inf_{ r\in [a^\pm-r_0,a^\pm + r_0-1]} |r- a_T(t)|$, then by Theorem \ref{mfgex}, we conclude that   \[ \lim_{T\to +\infty}  \int_{a_T(T/4)}^{a_T( T/4)+1} W(s)ds =0\qquad \text{and }\quad  \lim_{T\to +\infty}  \int_{a_T(-T/4)}^{a_T(- T/4)+1} W(s)ds =0.\]  
 
  \end{proof}

\appendix
\section{Some algebraic facts}\label{algapp}

Let $N \in \mathbb N$ and $\bx = (x^1, \ldots, x^N) \in \R^N$. Let
\[
d^i = d^i(\bx) := x^{i-1} - x^{i}, \qquad \text{for $i=2, \ldots, N$},
\]
and
\[
\bm^J = \bm^J(\bx) := \frac1{N-J} \sum_{i=J+1}^N x^i - \frac1{J} \sum_{i=1}^J x^i \qquad \text{for $J=1, \ldots, N-1$},
\]
which is the difference between the mean of $(x^{J+1}, \ldots, x^N)$ and the mean of $(x^1, \ldots, x^{J})$.

\begin{lemma}\label{lemA1} For all $J=1, \ldots, N-1$, the following identity holds:
\[
\bm^J = \frac{N-1}{N-J}\bm^1 - \sum_{j=1}^{J-1}\left(\frac{N-j}{N-J} - \frac jJ \right)d^{j+1}.
\]
Note that $\frac{N-j}{N-J} - \frac jJ  > 0$ for all $j \le J-1$.
\end{lemma}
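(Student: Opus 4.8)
The plan is to reduce the identity to a telescoping of the consecutive gaps $d^{j+1}$, after first expressing $\bm^J$ through partial sums. Concretely, I would set $S_J := \sum_{i=1}^{J} x^i$, so that $S_N$ is the total sum, and rewrite, for every $J = 1,\ldots,N-1$,
\[
\bm^J \;=\; \frac{S_N - S_J}{N-J} - \frac{S_J}{J} \;=\; \frac{J\,S_N - N\,S_J}{J\,(N-J)} .
\]
Specializing this to $J=1$ gives $\bm^1 = (S_N - N x^1)/(N-1)$, hence $S_N = (N-1)\,\bm^1 + N x^1$.

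Next I would substitute this value of $S_N$ into the displayed formula for a general $\bm^J$: the term carrying $\bm^1$ produces exactly $\frac{N-1}{N-J}\bm^1$, and a one-line simplification of what is left over yields
\[
\bm^J - \frac{N-1}{N-J}\,\bm^1 \;=\; \frac{N\,(J x^1 - S_J)}{J\,(N-J)} .
\]
Now $J x^1 - S_J = -\sum_{i=2}^{J}(x^i - x^1)$, and each $x^i - x^1$ telescopes as $\sum_{k=2}^{i} d^k$ with $d^k = x^k - x^{k-1}$; exchanging the order of summation (for fixed $k$ the index $i$ runs over $k\le i\le J$) gives $J x^1 - S_J = -\sum_{k=2}^{J}(J-k+1)\,d^k = -\sum_{j=1}^{J-1}(J-j)\,d^{j+1}$ after the shift $k=j+1$. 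Hence
\[
\bm^J - \frac{N-1}{N-J}\,\bm^1 \;=\; -\sum_{j=1}^{J-1} \frac{N\,(J-j)}{J\,(N-J)}\, d^{j+1},
\]
so the proof is finished once one verifies the elementary identity $\dfrac{N\,(J-j)}{J\,(N-J)} = \dfrac{N-j}{N-J} - \dfrac{j}{J}$: writing the right-hand side over the common denominator $J(N-J)$, its numerator is $J(N-j) - j(N-J) = N(J-j)$. The very same expression shows $\frac{N-j}{N-J} - \frac jJ = \frac{N(J-j)}{J(N-J)} > 0$ for all $1\le j\le J-1$, since then $J-j\ge 1$ and $0<J<N$, which is the positivity assertion.

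The computation is purely algebraic and I do not expect any real obstacle; the only spot that calls for a little care is the double-sum rearrangement $\sum_{i=2}^{J}\sum_{k=2}^{i} d^k = \sum_{k=2}^{J}(J-k+1)\,d^k$, together with the index shift and keeping the sign convention $d^k = x^k - x^{k-1}$ consistent throughout. An alternative would be a short induction on $J$, exploiting that moving from $\bm^{J-1}$ to $\bm^J$ modifies only one summand, but the direct route above is the most transparent.
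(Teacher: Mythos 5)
Your proof is correct and follows essentially the same route as the paper's: both express the partial sums $\sum_{i\le J} x^i$ and $\sum_{i>J} x^i$ in terms of $x^1$ and the consecutive gaps $d^j$ via the double-sum rearrangement $\sum_{i=2}^{J}\sum_{k=2}^{i} d^k = \sum_{k=2}^{J}(J-k+1)d^k$, and then finish by the same elementary coefficient identity $\frac{N(J-j)}{J(N-J)} = \frac{N-j}{N-J} - \frac{j}{J}$. The only cosmetic difference is that you first solve $\bm^1 = (S_N - Nx^1)/(N-1)$ for $S_N$ and substitute, whereas the paper computes $(N-J)\bm^J - (N-1)\bm^1$ directly; this is the same algebra organized in a slightly different order. (One small remark: you correctly use $d^k = x^k - x^{k-1}$, which matches the convention in the proof of Theorem \ref{propositionsaturation} and what the appendix's proof actually uses, even though the displayed definition at the start of Appendix \ref{algapp} has the sign reversed — a typo in the source.)
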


\begin{proof} It is straightforward to prove by induction that for $i=2,\ldots,N$
\[
x^i = x^1 + \sum_{j=2}^i d^j,
\]
and for $J\geq 2$ 
\begin{equation}\label{J1}
\sum_{i =1}^J x^i = Jx^1 + \sum_{i =2}^J (J+1-i)d^i.
\end{equation}
Therefore,
\begin{equation}\label{J2}
\sum_{i=J+1}^N x^i = \sum_{i =1}^N x^i - \sum_{i =1}^J x^i = (N-J)x^1 + \sum_{i =2}^N (N+1-i)d^i - \sum_{i =2}^J (J+1-i)d^i .
\end{equation}
Then, using \eqref{J1} and \eqref{J2} we get
\begin{align*}
& (N-J)\bm^J - (N-1) \bm^1 = \sum_{i=J+1}^N x^i - \frac{N-J}{J} \sum_{i=1}^j x^i -\sum_{i=2}^N x^i +(N-1)x^1 \\ 
=&   (N-J)x^1 + \sum_{i =2}^N (N+1-i)d^i - \sum_{i =2}^J (J+1-i)d^i -   \frac{N-J}{J} \left(Jx^1 + \sum_{i =2}^J (J+1-i)d^i\right)\\
-& (N-1)x^1 -\sum_{i =2}^N (N+1-i)d^i  +(N-1)x^1
\\= &    - \sum_{i =2}^J (J+1-i)d^i -   \frac{N-J}{J}  \sum_{i =2}^J (J+1-i)d^i
=     - (N-J) \sum_{i =2}^J \left(\frac{J+1-i}{N-J}  +   \frac{J+1-i}{J} \right)d^i  
\\
= & - (N-J)  \sum_{i =2}^J \left( \frac{N+1-i}{N-J} - \frac{i-1}J \right) d^i=- (N-J)  \sum_{i =1}^{J-1} \left( \frac{N -i}{N-J} - \frac{i}J \right) d^{i+1}, 
\end{align*}
which gives the claimed identity after dividing by $N-J$.
\end{proof}

\begin{lemma}\label{lemA3}
Let $J=1, \ldots, N-1$ and $\alpha > 0$. Then, 
\[
\min_{\substack{\bx \in \R^N \ : \\ d^i(\bx) \ge \alpha \ \forall i}} \bm^J(\bx) = \alpha \frac N 2.
\]
Moreover, $\bm^J(\bx) = \alpha \frac N 2$ if and only if $d^i(\bx) = \alpha$  for all $i=2, \ldots, N$.
\end{lemma}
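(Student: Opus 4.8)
The plan is to reduce the whole statement to one observation: $\bm^J$ is an explicit linear function of the consecutive gaps $d^2,\dots,d^N$ with \emph{strictly positive} coefficients. To get that formula I would start, exactly as in the proof of Lemma \ref{lemA1}, from $x^i = x^1 + \sum_{j=2}^i d^j$ and expand the two barycentric sums in the definition of $\bm^J$. Counting, for each $j$, how many indices $i$ contribute a $d^j$ gives
\[
\sum_{i=1}^J x^i = Jx^1 + \sum_{j=2}^J (J+1-j)\,d^j,\qquad
\sum_{i=J+1}^N x^i = (N-J)x^1 + (N-J)\!\sum_{j=2}^J d^j + \sum_{j=J+1}^N (N+1-j)\,d^j ,
\]
where the weight $N-J$ for the gaps $d^j$ with $j\le J$ comes from the fact that all of $i=J+1,\dots,N$ satisfy $i\ge j$. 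Substituting into $\bm^J = \frac1{N-J}\sum_{i=J+1}^N x^i - \frac1{J}\sum_{i=1}^J x^i$, the $x^1$ terms cancel and the $\sum_{j=2}^J d^j$ coming from the first bracket partially cancels $\frac1J\sum_{j=2}^J(J+1-j)\,d^j$ (since $1-\tfrac{J+1-j}{J}=\tfrac{j-1}{J}$), leaving
\[
\bm^J(\bx) = \sum_{i=2}^J \frac{i-1}{J}\,d^i + \sum_{i=J+1}^N \frac{N+1-i}{N-J}\,d^i .
\]
Equivalently, this is Lemma \ref{lemA1} combined with its own $J=1$ instance $\bm^1 = \frac{1}{N-1}\sum_{i=2}^N (N+1-i)\,d^i$, so one may cite that route instead of redoing the expansion.

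Once this identity is in place the rest is routine. Every coefficient is strictly positive: $\frac{i-1}{J}>0$ for $2\le i\le J$, and $\frac{N+1-i}{N-J}>0$ for $J+1\le i\le N$ (using $1\le J\le N-1$). Hence on the feasible set $\{\bx : d^i(\bx)\ge\alpha\ \forall i\}$ one has, writing the coefficients as $c_i>0$,
\[
\bm^J(\bx)-\alpha\Bigl(\sum_{i=2}^J\tfrac{i-1}{J}+\sum_{i=J+1}^N\tfrac{N+1-i}{N-J}\Bigr)=\sum_{i=2}^N c_i\,(d^i-\alpha)\ \ge\ 0 ,
\]
and since each $d^i-\alpha\ge0$ this vanishes if and only if $d^i(\bx)=\alpha$ for every $i=2,\dots,N$; such a configuration is admissible (take $x^i=i\alpha$, using that $\bm^J$ depends only on the gaps), so the minimum is attained there. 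It then remains to evaluate the two Gauss sums $\sum_{i=2}^J\frac{i-1}{J}=\frac1J\cdot\frac{(J-1)J}{2}=\frac{J-1}{2}$ and $\sum_{i=J+1}^N\frac{N+1-i}{N-J}=\frac{1}{N-J}\cdot\frac{(N-J)(N-J+1)}{2}=\frac{N-J+1}{2}$, whose sum is $\frac{J-1}{2}+\frac{N-J+1}{2}=\frac N2$. This gives simultaneously $\min\bm^J=\alpha\frac N2$ and the stated characterization of the minimizers.

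I expect the only mildly delicate point to be the combinatorial bookkeeping in expanding $\sum_{i=J+1}^N x^i$ and the tracking of the partial cancellation against the $\frac1J$-weighted first sum, so that the coefficients collapse cleanly to $\frac{i-1}{J}$ and $\frac{N+1-i}{N-J}$; after that closed form, the argument is a one-line positivity estimate together with two arithmetic-series evaluations.
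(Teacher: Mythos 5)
Your proof is correct and follows essentially the same route as the paper: express $\bm^J$ as a positive linear combination of the gaps $d^j$, deduce that the minimum over the feasible set is attained exactly at $d^j\equiv\alpha$, and evaluate the resulting two arithmetic sums to get $\frac N2$. The only cosmetic difference is that you expand both barycentric sums around $x^1$ (which forces the partial cancellation you flag), whereas the paper expands around $x^J$, making the $x^J$-terms drop immediately and avoiding that bookkeeping; the resulting identity $\bm^J=\sum_{j=2}^J\frac{j-1}{J}d^j+\sum_{j=J+1}^N\frac{N+1-j}{N-J}d^j$ and everything after it are the same.
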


\begin{proof} Since for all $i \ge J+1$, 
\[
x^i = x^J + \sum_{j=J+1}^i d^j,
\] we get arguing as in \eqref{J1} 
\begin{align*}
\sum_{i=J+1}^N x^i = & (N-J) x^J + \sum_{i=J+1}^N \sum_{j=J+1}^i  d^j  =  (N-J) x^J + \sum_{j=J+1}^N (N+1-j)d^j.
\end{align*}
Similarly, for all $i \le J-1$, $J\ge 2$, 
\[
x^i = x^J - \sum_{j=i+1}^J d^j, \qquad \sum_{i=1}^{J-1} x^i = (J-1)x^J - \sum_{i=2}^{J} \sum_{j=i}^J d^j=  (J-1)x^J - \sum_{j=2}^{J} (j-1) d^j .
\]
Hence,
\[
 \bm^J(\bx) = \sum_{j=J+1}^N \frac{(N+1-j)}{N-J} d^j(\bx)+ \sum_{j=2}^{J}\frac{ j-1}{J} d^j(\bx),
\]
that is, $\bm^J(\bx)$ is a linear combination of $d^j(\bx)$ with positive coefficients, which implies that   $\bm^J(\bx)$ is minimized if and only if $d^j(\bx)$ are minimal, that is $d^j(\bx) = \alpha$ for all $j$. 

Assume now that $d^2(\bx) = \ldots = d^N(\bx) = \alpha$ and compute $\bm^J(\bx)$:
\[
\frac{ \bm^J(\bx) }{\alpha} = \sum_{j=J}^{N-1} \frac{(N-j)}{N-J}  + \sum_{j=1}^{J-1}\frac{ j}{J}= N- \frac{1}{N-J}  \sum_{j=1}^{N-1} j + \frac{1}{N-J}  \sum_{j=1}^{J-1} j  + \frac{J-1}{2}=\frac{N}{2}.
\]
\end{proof}

\small
\addcontentsline{toc}{section}{References}
%\bibliographystyle{plain}
%\bibliography{etero}

\begin{thebibliography}{10}

\bibitem{abc}
Y.~Achdou, M.~Bardi, and M.~Cirant.
\newblock Mean field games models of segregation.
\newblock {\em Math. Models Methods Appl. Sci.}, 27(1):75--113, 2017.

\bibitem{ags}
L.~Ambrosio, N.~Gigli, and G.~Savar\'{e}.
\newblock {\em Gradient flows in metric spaces and in the space of probability
  measures}.
\newblock Lectures in Mathematics ETH Z\"{u}rich. Birkh\"{a}user Verlag, Basel,
  second edition, 2008.

\bibitem{bfrs17}
M.~Bongini, M.~Fornasier, F.~Rossi, and F.~Solombrino.
\newblock Mean-field {P}ontryagin maximum principle.
\newblock {\em J. Optim. Theory Appl.}, 175(1):1--38, 2017.

\bibitem{BC16}
A.~Briani and P.~Cardaliaguet.
\newblock Stable solutions in potential mean field game systems.
\newblock {\em NoDEA Nonlinear Differential Equations Appl.}, 25(1):25:1, 2018.

\bibitem{bptt19}
M.~Burger, R.~Pinnau, C.~Totzeck, and O.~Tse.
\newblock Mean-field optimal control and optimality conditions in the space of
  probability measures.
\newblock {\em arXiv preprint, https://arxiv.org/abs/1902.05339}, 2019.

\bibitem{cas16}
P.~Cardaliaguet, A.~R. M\'{e}sz\'{a}ros, and F.~Santambrogio.
\newblock First order mean field games with density constraints: pressure
  equals price.
\newblock {\em SIAM J. Control Optim.}, 54(5):2672--2709, 2016.

\bibitem{carr1}
J.~A. Carrillo, Y.-P. Choi, and S.~P. Perez.
\newblock A review on attractive-repulsive hydrodynamics for consensus in
  collective behavior.
\newblock {\em In Active particles. Vol. 1. Advances in theory, models, and
  applications, Model. Simul. Sci. Eng. Technol., Birkh\"auser/Springer}, pages
  259--298, 2017.

\bibitem{carr2}
J.~A. Carrillo, M.~Fornasier, G.~Toscani, and F.~Vecil.
\newblock Particle, kinetic, and hydrodynamic models of swarming.
\newblock {\em In Mathematical modeling of collective behavior in
  socio-economic and life sciences, Model. Simul. Sci. Eng. Technol.,
  Birkh\"auser Boston, Inc., Boston}, 2010.

\bibitem{ccbrake}
A.~Cesaroni and M.~Cirant.
\newblock Brake orbits and heteroclinic connections for first order mean field
  games.
\newblock {\em arXiv preprint, https://arxiv.org/abs/1912.05874}, 2019.

\bibitem{cc}
A.~Cesaroni and M.~Cirant.
\newblock Concentration of ground states in stationary mean-field games
  systems.
\newblock {\em Anal. PDE}, 12(3):737--787, 2019.

\bibitem{notebari}
A.~Cesaroni and M.~Cirant.
\newblock Introduction to variational methods for viscous ergodic mean-field
  games with local coupling.
\newblock {\em Springer INdAM series, Contemporary Research in Elliptic PDEs
  and Related Topics}, pages 221--246, 2019.

\bibitem{c19}
M.~Cirant.
\newblock On the existence of oscillating solutions in non-monotone mean-field
  games.
\newblock {\em J. Differential Equations}, 266(12):8067--8093, 2019.

\bibitem{cn}
M.~Cirant and L.~Nurbekyan.
\newblock The variational structure and time-periodic solutions for mean-field
  games systems,.
\newblock {\em Minimax Theory Appl.}, 3:227--260, 2018.

\bibitem{fs}
M.~Fischer and F.~J. Silva.
\newblock On the asymptotic nature of first order mean field games.
\newblock {\em arXiv preprint, https://arxiv.org/abs/1903.03602}, 2019.

\bibitem{flos}
M.~Fornasier, S.~Lisini, C.~Orrieri, and G.~Savar\`e.
\newblock Mean-field optimal control as {$\Gamma$}-limit of finite agent
  controls.
\newblock {\em European Journal of Applied Mathematics}, 30(6):1153--1186.,
  2019.

\bibitem{fornasier}
M.~Fornasier and F.~Solombrino.
\newblock Mean-field optimal control.
\newblock {\em ESAIM Control Optim. Calc. Var.}, 20(4):1123--1152, 2014.

\bibitem{dffr}
M.~D. Francesco, S.~Fagioli, and E.~Radici.
\newblock Deterministic particle approximation for nonlocal transport equations
  with nonlinear mobility.
\newblock {\em J. Differential Equations}, 266(5):2830 --2868, 2019.

\bibitem{dfr}
M.~D. Francesco and M.~D. Rosini.
\newblock Rigorous derivation of nonlinear scalar conservation laws from
  follow- the-leader type models via many particle limit.
\newblock {\em Arch. Ration. Mech. Anal.}, 217(3):831--871, 2015.

\bibitem{hr19}
M.~Herty and C.~Ringhofer.
\newblock Consistent mean field optimality conditions for interacting agent
  systems.
\newblock {\em Commun. Math. Sci.}, 17(4):1095--1108, 2019.

\bibitem{Lacker}
D.~Lacker.
\newblock Limit theory for controlled {M}c{K}ean- {V}lasov dynamics.
\newblock {\em SIAM J. Control Optim.}, 55(3):1641--1672, 2017.

\bibitem{LL07}
J.-M. Lasry and P.-L. Lions.
\newblock Mean field games.
\newblock {\em Jpn. J. Math.}, 2(1):229--260, 2007.

\bibitem{LLbook}
E.~H. Lieb and M.~Loss.
\newblock {\em Analysis}, volume~14 of {\em Graduate Studies in Mathematics}.
\newblock American Mathematical Society, Providence, RI, 1997.

\bibitem{Sbook}
F.~Santambrogio.
\newblock {\em Optimal transport for applied mathematicians}, volume~87 of {\em
  Progress in Nonlinear Differential Equations and their Applications}.
\newblock Birkh\"{a}user/Springer, Cham, 2015.
\newblock Calculus of variations, PDEs, and modeling.

\bibitem{sreview}
F.~Santambrogio.
\newblock Crowd motion and evolution {PDE}s under density constraints.
\newblock In {\em S{MAI} 2017---{$8^{\rm e}$} {B}iennale {F}ran\c{c}aise des
  {M}ath\'{e}matiques {A}ppliqu\'{e}es et {I}ndustrielles}, volume~64 of {\em
  ESAIM Proc. Surveys}, pages 137--157. EDP Sci., Les Ulis, 2018.

\bibitem{ymms}
H.~Yin, P.~G. Mehta, S.~P. Meyn, and U.~V. Shanbhag.
\newblock Bifurcation analysis of a heterogeneous mean-field oscillator game
  model.
\newblock In {\em Proceedings of the 50th {IEEE} Conference on Decision and
  Control and European Control Conference, {CDC-ECC} 2011}, pages 3895--3900,
  2011.

\end{thebibliography}
%\bibliographystyle{abbrv}

\medskip

\begin{flushright}
\noindent \verb"annalisa.cesaroni@unipd.it"\\
Dipartimento di Scienze Statistiche\\ Universit\`a di Padova\\
Via Battisti 241/243, 35121 Padova (Italy)

\smallskip

\noindent \verb"cirant@math.unipd.it"\\
Dipartimento di Matematica ``Tullio Levi-Civita'' \\ Universit\`a di Padova\\
Via Trieste 63, 35121 Padova (Italy)
\end{flushright}
\end{document}